\theoremstyle{plain} 
\newtheorem{thm}{Theorem} 
\newtheorem{cor}[thm]{Corollary} 
\newtheorem{lem}[thm]{Lemma} 
\newtheorem{conj}[thm]{Conjecture} 
\newtheorem{prop}[thm]{Proposition} 
\theoremstyle{definition} 
\newtheorem{defn}[thm]{Definition}
\renewcommand{\int}{\operatorname{int}}
\newcommand{\Ker}{\operatorname{Ker}}
\newcommand{\Cok}{\operatorname{Cok}}
\newcommand{\id}{\operatorname{id}}
\newcommand{\br}{\mathbb B\text{r}}
\newcommand{\im}{\operatorname{Im}}
\newcommand{\Z}{\mathbb{Z}}
\newcommand{\z}{ \mathbb Z_2}
\newcommand{\out}{\operatorname{Out}}
\newcommand\sL{\text{\sf L}}
\newcommand\sD{\text{\sf D}}
\newcommand{\cT}{\mathcal{T}}
\newcommand{\zh}{\mathbb Z[\frac{1}{2}]}
\newcommand{\tree}[3]{\text{\Large {$ {\text{\normalsize $ #1$}}-\!\!\!<^{#2}_{#3}$}}}
\begin{document}

\title{Tree Homology and a Conjecture of Levine}

\begin{abstract}
In his study of the group of homology cylinders, J. Levine \cite{L2} made the conjecture that a certain homomorphism $\eta'\colon\mathcal T\to{\sf D}'$ is an isomorphism. Here $\cT$ is an abelian group on labeled oriented trees, and $\sD'$ is the kernel of a bracketing map on a quasi-Lie algebra. Both $\cT$ and $\sD'$ have strong connections to a variety of topological settings, including the mapping class group, homology cylinders, finite type invariants, Whitney tower intersection theory, and the homology of $\out(F_n)$.
In this paper, we confirm Levine's conjecture. This is a central step in classifying the structure of links up to grope and Whitney tower concordance, as explained in other papers of this series \cite{CST0,CST1,CST2,CST4}. We also confirm and improve upon Levine's conjectured relation between two filtrations of the group of homology cylinders.
\end{abstract}

\author[J. Conant]{James Conant}
\email{jconant@math.utk.edu}
\address{Dept. of Mathematics, University of Tennessee, Knoxville, TN 37996}

\author[R. Schneiderman]{Rob Schneiderman}
\email{robert.schneiderman@lehman.cuny.edu}
\address{Dept. of Mathematics and Computer Science, Lehman College, City University of New York, Bronx, NY 10468 }

\author[P. Teichner]{Peter Teichner} 
\email{teichner@mac.com} 
\address{Dept. of Mathematics, University of California, Berkeley, CA and} 
\address{Max-Planck Institut f\"ur Mathematik, Bonn, Germany} 

%\shortauthors{J. Conant, R. Schneiderman and P. Teichner}

\keywords{Levine conjecture, tree homology, homology cylinders, Whitney towers, discrete Morse theory, quasi-Lie algebra}

\maketitle

\section{Introduction}
There is an interesting Lie algebra ${\sf D}(H)$ that lies at the heart of many areas of topology. It can be defined as the kernel of the bracketing map $H\otimes {\sf L}(H)\to {\sf L}(H)$ on the free Lie algebra ${\sf L}(H)$ over a symplectic vector space $H$. An equivalent definition is that ${\sf D}(H)={\sf Der}_\omega({\sf L}(H))$ is the set of derivations of the free Lie algebra that kill the symplectic element $\omega=\sum_i[p_i,q_i]$  where $\{p_i,q_i\}$ is a symplectic basis of $H$.

When $H=H_1(\Sigma_g;\Z)$ is the first homology of a closed oriented surface $\Sigma_g$ of genus $g$,
Dennis Johnson used ${\sf D}(H)$ to study the relative weight filtration of the mapping class group of the surface $\Sigma$ \cite{J}. Specifically, he showed that the associated graded group is a Lie algebra which embeds in ${\sf D}(H)$. In a different direction, letting $H$ be the direct limit of finite dimensional symplectic vector spaces, Kontsevich first noticed that the homology of the Lie algebra ${\sf D}(H)$ can be used to study the rational homology of outer automorphism groups of free groups, a beautiful connection that was exploited by Morita and then by Conant-Vogtmann \cite{CV,CV2,Kon,Mor2}.

When $H$ is an abelian group with no symplectic structure, one can still define an abelian group ${\sf D}(H)$ as the kernel of the bracketing map. Letting $H$ be the first homology of a link complement,
${\sf D}(H)$ becomes the natural home for Milnor invariants of the link \cite{CST2,HM,O}.

In \cite{L1}, Levine clarified Johnson's construction by enlarging the mapping class group to a group of homology cylinders, proving that the associated graded group becomes all of ${\sf D}(H)$.  In the context of homology cylinders there is another natural filtration, introduced by Habiro and Goussarov \cite{GGP,Gouss,Hab}, which is related to finite type invariants, called the $Y$-filtration. In order to relate the $Y$-filtration to the relative weight filtration,
Levine worked with a map $\eta_n$ from an abelian group of trivalent trees with $n$ trivalent vertices, 
$\mathcal T_n$ (defined below), to ${\sf D}_n$. (Close relations of the group  $\mathcal T_n$ have previously appeared in the the theory of Goussarov-Vassiliev invariants \cite{HM,Mo}. See also \cite{Hab1} for more connections to homology cylinders.)
Rationally, $\eta_n$ was known to be an isomorphism, and initially Levine thought that it was an isomorphism over the integers as well. However, in \cite{L2} he published a correction, noting that certain symmetric trees were in the kernel of $\eta_n$, but nevertheless establishing that $\eta_n$ is onto. In order to promote $\eta_n$ to an isomorphism, Levine realized that in the definition of ${\sf D}_n$ as the kernel of a bracketing map,  one needs to replace free Lie algebras by free \emph{quasi}-Lie algebras. These are defined like Lie algebras, but with the self-annihilation relation $[Z,Z]=0$ replaced by antisymmetry $[Y,Z]=-[Z,Y]$. This leads to a new map $\eta'_n\colon\mathcal T_n\to{\sf D}'_n$, and Levine made what we have been calling the \emph{Levine Conjecture}, that this is an isomorphism, proving it in many special cases.

This paper is third in a series of papers on Whitney tower intersection theory, in which the
 group $\mathcal T_n$ is of central importance.  The question of whether $\eta'$ is an isomorphism plays a crucial role in our arguments. For example, establishing that $\eta'$ is an isomorphism leads to the classification theorems for geometric filtrations of link concordance as described in \cite{CST0,CST1,CST2,CST4}. Thus we were motivated to prove Levine's conjecture:

\begin{thm}\label{mainthm}
 $\eta'_n\colon\mathcal T_n\to{\sf D}'_n$ is an isomorphism for all $n$.
\end{thm}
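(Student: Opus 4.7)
Since Levine has shown that $\eta'_n$ is surjective, the content of the theorem is injectivity. The plan is to present both $\cT_n$ and $\sD'_n$ as the zeroth homology of parallel chain complexes on labeled trees, promote $\eta'_n$ to a chain map between them, and apply discrete Morse theory to reduce both sides to minimal complexes whose cells can be matched up explicitly.

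First, build a chain complex $C_\bullet$ whose $k$-chains are formal combinations of $H$-labeled, oriented trees with $n+2$ leaves in which the total excess $\sum_v (\mathrm{val}(v)-3)$ over interior vertices equals $k$. The boundary sends a vertex of valence at least $4$ to the signed sum of its binary expansions at that vertex; the map $C_1\to C_0$ reproduces the IHX relation, and higher boundaries encode the standard syzygies between IHX instances. With antisymmetry at each vertex built into the orientation data, $H_0(C_\bullet)$ is $\cT_n$. Construct the analogous complex $R_\bullet$ on rooted $H$-labeled trees, whose zeroth homology is the free quasi-Lie algebra $\sL'(H)$ in the appropriate degree, and combine it with the quasi-Lie bracket $H\otimes\sL'_{n+1}\to\sL'_{n+2}$ to produce a total complex whose $H_0$ is $\sD'_n$. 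The map $\eta'_n$, summing over all $n+2$ rootings of a tree, extends to a chain map $\widetilde\eta_\bullet\colon C_\bullet \to H\otimes R_\bullet$ landing in the subcomplex computing $\sD'_n$.

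The heart of the argument is a discrete Morse matching on $C_\bullet$ and $R_\bullet$ compatible with $\widetilde\eta_\bullet$. Fix a total order on $H$ and, for each tree, mark the edge adjacent to the leaf carrying the maximal label. Pair non-critical cells by collapsing the marked edge between two trivalent vertices against the corresponding $4$-valent configuration, using tie-breakers (such as lexicographic order on subtree labels or position along the spine) to obtain an acyclic gradient vector field. The critical cells should be exactly the trees in a canonical form with no available collapse; enumerating them yields simultaneous bases for $\cT_n$ and for $\sD'_n$, related cell-by-cell under $\widetilde\eta_\bullet$, whence the isomorphism.

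The main obstacle is the $2$-torsion in $\cT_n$ coming from symmetric trees, i.e., trees with a non-trivial automorphism exchanging labeled branches. These are precisely the classes on which Levine's original map $\eta_n$ into the Lie-algebra kernel $\sD_n$ fails to be injective. The Morse matching must be chosen equivariantly, so that an orbit of symmetric trees is never paired with an asymmetric orbit; otherwise the $2$-torsion on one side could be artificially killed or spuriously created. The key point is that in the quasi-Lie algebra $\sL'(H)$ the element $[x,x]$ is genuine $2$-torsion rather than zero, and this is exactly what allows the symmetric critical cells of $C_\bullet$ to match bijectively with the critical cells on the rooted side under $\widetilde\eta_\bullet$. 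Verifying this bijection on every isotropy stratum --- tracking which symmetric configurations survive the matching, and checking that $\widetilde\eta_\bullet$ acts by $\pm 1$ on each survivor --- will be the most delicate step of the proof.
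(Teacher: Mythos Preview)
Your high-level architecture is close to the paper's, but there are two concrete gaps that would block the argument as written.

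\textbf{First gap: realizing $\sD'_n$ homologically.} You write that combining $R_\bullet$ with the bracket map ``produce[s] a total complex whose $H_0$ is $\sD'_n$.'' But $\sD'_n$ is a \emph{kernel}, so it is naturally an $H_1$, not an $H_0$. Concretely, the paper forms the short exact sequence of chain complexes
\[
0 \to \Z^m\otimes\mathbb L_{\bullet,n+1} \xrightarrow{\br_\bullet} \mathbb L_{\bullet,n+2} \to \overline{\mathbb L}_{\bullet,n+2} \to 0
\]
and reads off from the long exact sequence that $\sD'_n \cong H_1(\overline{\mathbb L}_{\bullet,n+2})$ \emph{provided} $H_1(\mathbb L_{\bullet,n+2};\Z)=0$. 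This vanishing is the technical heart of the paper and is not free: it is proved by a discrete Morse vector field built from the full recursive Hall basis algorithm (and Levine's quasi-Hall variant), carried out separately over $\zh$ and $\z$ to control the $2$-torsion summands in the chain groups. Your proposed matching --- collapse the edge adjacent to the maximal-label leaf --- is far too coarse to achieve this; in particular it gives no mechanism for eliminating all degree-$1$ critical cells in $R_\bullet$, which is exactly what the vanishing requires. Without this vanishing you cannot identify $\sD'_n$ with the homology of any single complex, and the rest of the plan does not start.

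\textbf{Second gap: $\widetilde\eta$ is not cell-to-cell.} You propose to choose Morse matchings on both sides ``compatible with $\widetilde\eta_\bullet$'' and then match critical cells bijectively. But $\eta'$ sends one unrooted tree to a \emph{sum} of $n+2$ rooted trees, so $\widetilde\eta_\bullet$ does not take generators to generators, and there is no a priori sense in which critical cells on the two sides correspond. The paper resolves this by a genuinely different maneuver: it lifts $\eta'$ to a chain map $\bbeta_\bullet\colon \mathbb T_\bullet \to \overline{\mathbb L}_{\bullet+1}$ that sums over attaching a root edge to every \emph{internal} vertex, and then builds a second, independent Morse field $\Delta$ on $\overline{\mathbb L}$ which pushes the root away from a chosen basepoint. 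After flowing by $\phi^\Delta$, all but one summand of $\bbeta_\bullet(t)$ die, and $\phi^\Delta\circ\bbeta_\bullet$ becomes essentially $t\mapsto t^b$ (root at the basepoint), with explicit small kernel and cokernel supported on symmetric trees. One then checks directly that this kernel is zero on $H_0$ and the cokernel is acyclic. This is the step that handles the $2$-torsion, and it is not an equivariance condition on a single matching but rather an analysis of the residual error of $\phi^\Delta\circ\bbeta_\bullet$.

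In short: the missing idea is that two \emph{different} Morse fields are needed, playing completely different roles --- a Hall-basis field on $\mathbb L_\bullet$ to prove $H_1(\mathbb L_\bullet)=0$ and thereby identify $\sD'_n$ with $H_1(\overline{\mathbb L}_\bullet)$, and a root-pushing field on $\overline{\mathbb L}_\bullet$ to collapse the many-term map $\bbeta_\bullet$ to an almost-bijection. Neither is the ``collapse at the maximal leaf'' matching you describe.
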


As discussed in Section~\ref{sec:filtrations}, this also allows us to prove (Theorem~\ref{thm:filtration}) and improve upon (Theorem \ref{thm:filt}, which we will prove in \cite{CST5}) Levine's conjectured relationship between the associated graded groups of homology cylinders with respect to the relative weight filtration and the Goussarov-Habiro $Y$-filtration \cite{L2}.

Fix an index set $\{1,\ldots,m\}$ once and for all. Let $\mathcal T_n=\mathcal T_n(m)$ be the abelian group of formal linear combinations of oriented unitrivalent trees with $n$ internal vertices (with no distinguished root), where the univalent vertices are labeled by elements of the index set, modulo IHX relations and antisymmetry relations. (In Levine's notation, $\mathcal T_n=\mathcal A^t_n(H)$, where $H\cong \mathbb Z^m$ is the free $\mathbb Z$-module spanned by the index set.) 

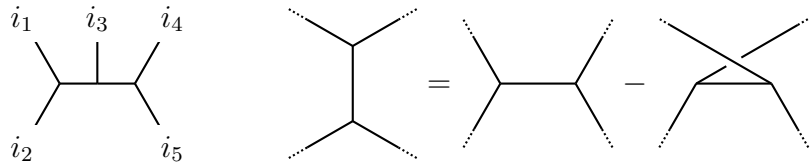
\begin{figure}[h]
\begin{center}
\begin{minipage}{1.5cm}
    \begin{tikzpicture} [thick]
\draw (0,0) -- (-.5,.866) node [fill=white] {$i_1$};
\draw (0,0) -- (-.5,-.866) node [fill=white] {$i_2$};
\draw (0,0) -- (.5,0);
\draw (.5,0)--(.5,.866) node [fill=white] {$i_3$};
\draw (.5,0)--(1.0,0);
\draw (1.0,0)--(1.5,.866) node [fill=white] {$i_4$};
\draw (1.0,0)--(1.5,-.866) node [fill=white] {$i_5$};
\end{tikzpicture}
\end{minipage}
\hspace{.8in}
\begin{minipage}{1.732cm}
\begin{tikzpicture} [thick,rotate=90]
\draw (0,0) -- (-.35,.6062);
\draw[densely dotted] (-.35,.6062)--(-.5,.866);
\draw (0,0) -- (-.35,-.6062);
\draw[densely dotted] (-.35,-.6062)-- (-.5,-.866);
\draw (0,0) -- (1,0);
\draw (1,0) -- (1.35,.6062);
\draw[densely dotted] (1.35,.6062)--(1.5,.866);
\draw (1,0) -- (1.35,-.6062);
\draw[densely dotted] (1.35,-.6062)--(1.5,-.866);
\end{tikzpicture}
\end{minipage}
$=$
\begin{minipage}{2cm}
\begin{tikzpicture} [thick]
\draw (0,0) -- (-.35,.6062);
\draw[densely dotted] (-.35,.6062)--(-.5,.866);
\draw (0,0) -- (-.35,-.6062);
\draw[densely dotted] (-.35,-.6062)-- (-.5,-.866);
\draw (0,0) -- (1,0);
\draw (1,0) -- (1.35,.6062);
\draw[densely dotted] (1.35,.6062)--(1.5,.866);
\draw (1,0) -- (1.35,-.6062);
\draw[densely dotted] (1.35,-.6062)--(1.5,-.866);
\end{tikzpicture}
\end{minipage}
$-$
\begin{minipage}{1.732cm}
\begin{tikzpicture} [thick]
\draw (0,0) -- (.4,.231);
\draw (.6,.346) -- (1.35,.78);
\draw[densely dotted] (1.35,.78) -- (1.5,.866);
\draw (0,0) -- (-.35,-.6062);
\draw[densely dotted] (-.35,-.6062)-- (-.5,-.866);
\draw (0,0) -- (1,0);
\draw (1,0) -- (-.35,.78);
\draw[densely dotted] (-.35,.78)--(-.5,.866);
\draw (1,0) -- (1.35,-.6062);
\draw[densely dotted] (1.35,-.6062)--(1.5,-.866);
\end{tikzpicture}
\end{minipage}
\end{center}
\caption{A tree in $\mathcal T_3$, and an IHX relation.}
\end{figure}

Let ${\sf L}_n={\sf L}_n(m)$ be the degree $n$ part of the free $\mathbb Z$-Lie algebra with degree $1$ basis $X_1,\ldots,X_m$. It is spanned by formal non-associative bracketing expressions of $n$ basis elements, modulo the Jacobi identity and self-annihilation relation $[Y,Y]=0$. Replacing this self-annihilation relation with the antisymmetry relation $[Y,Z]=-[Z,Y]$, one obtains ${\sf L}'_n={\sf L}'_n(m)$, the degree $n$ part of the free $\mathbb Z$ \emph{quasi-Lie algebra} on these same generators.

Let ${\sf D}_n$ be the kernel of the bracketing map $\mathbb Z^m\otimes{\sf L}_{n+1}\to{\sf L}_{n+2}$, defined by $X_i\otimes Y\mapsto[X_i,Y]$, where $\sL_1$ is identified with $\Z^m$; and let ${\sf D}'_n$ be the kernel of the corresponding bracketing map, $\mathbb Z^m\otimes{\sf L}'_{n+1}\to{\sf L}'_{n+2}$, of quasi-Lie algebras.

Levine's map $\eta'\colon \mathcal T_n\to{\sf D}'_n$
is defined by the formula
$$\eta'(t)=\sum_v X_{\ell(v)}\otimes B'_v(t)$$
where the sum is over all univalent vertices $v$ of $t$, $\ell(v)$ is the index labeling $v$, and $B'_v(t)$ is defined to be the iterated bracket in ${\sf L}'_{n+1}$ corresponding to the \emph{rooted} oriented tree obtained from $t$ by removing the label of $v$ and letting $v$ be the root. Here is an example.
\begin{align*}
\begin{minipage}{2.5cm}
\begin{tikzpicture} [thick]
\draw (0,0)--(-.5,.866) node [fill=white] {$i$};
\draw (0,0)--(-.5,-.866) node [fill=white] {$l$};
\draw (0,0)--(1,0);
\draw (1,0)--(1.5,.866) node [fill=white] {$j$};
\draw (1,0)--(1.5,-.866) node [fill=white] {$k$};
\end{tikzpicture}
\end{minipage}
\mapsto
&X_l\otimes\left(
\begin{minipage}{1.9cm}
\begin{tikzpicture} [thick]
\fill[black] (0,.0) circle (.05);
\draw (0,0)--(0,.5);
\draw (0,.5)--(-.5,1.2) node [fill=white]{$i$};
\draw (0,.5)--(.5,1.2);
\draw (.5,1.2)--(0,1.9) node [fill=white]{$j$};
\draw (.5,1.2)--(1,1.9) node [fill=white]{$k$};
\end{tikzpicture}
\end{minipage}\right)
+
X_i\otimes\left(
\begin{minipage}{1.9cm}
\begin{tikzpicture} [thick]
\fill[black] (0,.0) circle (.05);
\draw (0,0)--(0,.5);
\draw (0,.5)--(-.5,1.2);
\draw (0,.5)--(.5,1.2) node [fill=white]{$l$};
\draw (-.5,1.2)--(0,1.9) node [fill=white]{$k$};
\draw (-.5,1.2)--(-1,1.9) node [fill=white]{$j$};
\end{tikzpicture}
\end{minipage}\right)\\
&+
X_j\otimes\left(
\begin{minipage}{1.9cm}
\begin{tikzpicture} [thick]
\fill[black] (0,.0) circle (.05);
\draw (0,0)--(0,.5);
\draw (0,.5)--(-.5,1.2) node [fill=white]{$k$};
\draw (0,.5)--(.5,1.2);
\draw (.5,1.2)--(0,1.9) node [fill=white]{$l$};
\draw (.5,1.2)--(1,1.9) node [fill=white]{$i$};
\end{tikzpicture}
\end{minipage}\right)
+
X_k\otimes\left(
\begin{minipage}{1.9cm}
\begin{tikzpicture} [thick]
\fill[black] (0,.0) circle (.05);
\draw (0,0)--(0,.5);
\draw (0,.5)--(-.5,1.2);
\draw (0,.5)--(.5,1.2) node [fill=white]{$j$};
\draw (-.5,1.2)--(0,1.9) node [fill=white]{$i$};
\draw (-.5,1.2)--(-1,1.9) node [fill=white]{$l$};
\end{tikzpicture}
\end{minipage}\right)
\end{align*}

In \cite{L2} Levine conjectured that, for every $n$, the map $\eta'_n\colon \mathcal T_n\to {\sf D}'_n$ is an isomorphism, which has implications concerning the precise relationship between two filtrations of the group of homology cylinders. He obtained partial progress toward the conjecture in the form of the following theorem, and even obtained more progress in \cite{L3}.

\begin{thm}[Levine]\label{levine2}
$\eta'_n\colon\mathcal T_n\to{\sf D}'_n$ is a split surjection. $\Ker \eta'_n$ is the torsion subgroup of $\mathcal T_n$ if $n$ is even. It is the odd torsion subgroup if $n$ is odd. In either case
$$(n+2)\Ker \eta'_n=0.$$
\end{thm}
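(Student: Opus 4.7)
The plan is to build an approximate inverse of $\eta'_n$ via a \emph{de-rooting} map. Define $s\colon \mathbb{Z}^m\otimes \sL'_{n+1} \to \cT_n$ on a generator $X_i\otimes \beta$, where $\beta$ is a rooted tree representing an element of $\sL'_{n+1}$, by forming the unrooted labeled tree obtained by adjoining a new univalent vertex labeled $i$ to the root of $\beta$. The first task is to verify that $s$ descends to a well-defined homomorphism: the Jacobi/IHX and antisymmetry relations of the quasi-Lie algebra $\sL'_{n+1}$ correspond to IHX at the internal edge adjacent to the new $i$-leaf and to antisymmetry at the now-trivalent old root in $\cT_n$, respectively.

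The central identity is $s\circ \eta'_n = (n+2)\cdot \id_{\cT_n}$, verified by direct combinatorial inspection: for each of the $n+2$ univalent vertices $v$ of $t$, the summand $s\bigl(X_{\ell(v)}\otimes B'_v(t)\bigr)$ reconstructs $t$ by re-attaching the label $\ell(v)$ to the root $v$ of $B'_v(t)$, so summing over all univalent vertices gives $(n+2)\,t$. This immediately yields $(n+2)\ker \eta'_n = 0$ and, after tensoring with $\mathbb{Q}$, shows that $\eta'_n$ is a rational isomorphism.

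For surjectivity I would establish the dual identity $\eta'_n\circ s = (n+2)\cdot \id$ on $\sD'_n\subseteq\mathbb{Z}^m\otimes\sL'_{n+1}$. For $x = \sum_k X_{i_k}\otimes \tau_k \in \sD'_n$, expanding each $\eta'_n\bigl(s(X_{i_k}\otimes\tau_k)\bigr)$ yields the diagonal term $X_{i_k}\otimes\tau_k$ together with $n+1$ re-rooting terms obtained by rooting $s(X_{i_k}\otimes\tau_k)$ at the leaves of $\tau_k$; the kernel condition $\sum_k[X_{i_k},\tau_k]=0$ in $\sL'_{n+2}$ together with iterated applications of Jacobi in the quasi-Lie algebra should force these re-rooting terms to collapse to $(n+1)\,x$. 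This gives $(n+2)\sD'_n\subseteq\im(\eta'_n)$; combined with rational surjectivity and explicit preimages for the torsion generators of $\sD'_n$ (for instance, $\eta'_n(Y_{jji})=X_i\otimes[X_j,X_j]$ realizes the 2-torsion class $X_i\otimes[X_j,X_j]$ as the image of the 2-torsion tree $Y_{jji}$, with the two $j$-leaf rootings canceling by antisymmetry), one obtains full surjectivity. The splitting is then produced by exploiting the pair of identities $s\eta'=\eta's=(n+2)\id$ together with a torsion-theoretic decomposition of $0\to\ker \eta'_n\to\cT_n\to\sD'_n\to 0$.

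The kernel characterization follows from $(n+2)\ker \eta'_n=0$ combined with an analysis of torsion in $\cT_n$ via tree automorphisms: trees admitting a branch-swapping automorphism contribute 2-torsion classes $t$ satisfying $t=-t$. When $n$ is even, $n+2$ is even, and a computation analogous to $\eta'_n(Y_{jji})$ but with opposite sign parity shows these classes lie in $\ker \eta'_n$, so the kernel accounts for the full torsion subgroup. When $n$ is odd, $n+2$ is odd, so the $(n+2)$-torsion kernel cannot contain any 2-primary torsion, forcing $\ker \eta'_n$ to equal exactly the odd-torsion subgroup. The main technical obstacle is the companion identity $\eta'_n\circ s=(n+2)\id$ on $\sD'_n$: unlike $s\circ\eta'_n$, which is an immediate combinatorial tautology, this dual identity requires a careful accounting of re-rooting contributions using the Jacobi identity in the quasi-Lie algebra in conjunction with the defining kernel hypothesis, and the 2-torsion subtleties of $\sL'$ must be handled explicitly.
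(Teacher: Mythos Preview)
This theorem is attributed to Levine and cited from \cite{L2}; the present paper does not supply its own proof. So there is no proof here to compare against, only Levine's original argument. Your approach via the de-rooting map $s$ and the identity $s\circ\eta'_n=(n+2)\cdot\id_{\cT_n}$ is the standard one and is essentially what Levine does; that identity is correct and immediately gives $(n+2)\ker\eta'_n=0$ as well as rational bijectivity.

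There are, however, two genuine gaps in your proposal. First, the companion identity $\eta'_n\circ s=(n+2)\cdot\id$ on $\sD'_n$ is asserted but not established; your sketch that ``iterated applications of Jacobi \dots should force these re-rooting terms to collapse'' is not a proof. Note that applying $\eta'_n$ to the easy identity gives $\eta'_n s\,\eta'_n=(n+2)\eta'_n$, so the dual identity holds automatically on $\im\eta'_n$; proving it on all of $\sD'_n$ is therefore essentially equivalent to proving surjectivity, and you cannot use it as the mechanism to deduce surjectivity without an independent argument. Levine's surjectivity proof proceeds differently.

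Second, and more seriously, your kernel characterization is circular. From $(n+2)\ker\eta'_n=0$ you correctly get $\ker\eta'_n\subset\mathrm{torsion}$ (and $\subset$ odd torsion when $n$ is odd). But the reverse inclusions require knowing that $\sD'_n$ is torsion-free when $n$ is even and has only $2$-torsion when $n$ is odd --- facts established by Levine in \cite{L3} and quoted in this paper just after the theorem statement. Once those are in hand, any torsion (resp.\ odd torsion) element of $\cT_n$ maps to a torsion element of $\sD'_n$ of the same order, hence to zero. Your alternative route, an ``analysis of torsion in $\cT_n$ via tree automorphisms,'' only exhibits the obvious $2$-torsion coming from symmetric trees; it does not rule out other torsion in $\cT_n$, and indeed ruling that out is precisely the content of the Levine Conjecture that the rest of this paper is devoted to proving.
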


By \cite{L3}, ${\sf D}'_n$ is torsion-free when $n$ is even, and only has $2$-torsion of the form $X_i\otimes[Z,Z]$ when $n$ is odd. Hence the Levine conjecture boils down to saying that the $\mathcal T_n$ groups have no torsion except the $2$-torsion coming from symmetric trees $\tree{i}{J}{J}$, which are $2$-torsion by the antisymmetry relation. In the theory of Whitney tower intersections, there is a commutative diagram
$$\xymatrix{\mathbb Q\otimes\cT_{n}\ar@{->>}[r]\ar[dr]_\eta&\mathbb Q\otimes{\sf W}_{n}\ar[d]^{\mu_{n}}\\
&\mathbb Q\otimes{\sf D}'_{n}
}$$
where the group ${\sf W}_n$ is defined as the set of links bounding order $n$ Whitney towers, modulo order $(n+1)$ Whitney concordance, and $\mu_n$ is the total Milnor invariant of order $n$ (length $n+2$).
Thus the fact that $\eta$ is a rational isomorphism implies that the total Milnor invariant $\mu_n$ completely classifies the associated graded group $\mathbb Q\otimes{\sf W}_n$. 
So a precise understanding of the torsion in $\mathcal T_n$ is necessary for us to be able to classify the groups ${\sf W}_n$ over the integers. It turns out that the Levine conjecture is crucial in establishing that ${\sf W}_n$ is completely classified by Milnor invariants, higher-order Sato-Levine invariants and  higher-order Arf invariants. See \cite{CST0,CST1,CST2,CST4} for details.

In our proof of the Levine conjecture, the first step is to reinterpret ${\sf L}'_n$ as the zeroth homology of a chain complex $\mathbb L_{\bullet,n}$ of rooted labeled trees, where internal vertices may have valence higher than three, with the homological degree given by excess valence. The bracketing operation  lifts to an injective map on the chain complex level, giving us a short exact sequence
$$0\to \mathbb Z^m\otimes\mathbb L_{\bullet,n+1}\to \mathbb L_{\bullet,n+2}\to\overline{\mathbb L}_{\bullet,n+2}\to 0$$
where the complex on the right is by definition the cokernel. Thus we obtain the long exact sequence 
$$H_1(\mathbb L_{\bullet,n+2})\to H_1(\overline{\mathbb L}_{\bullet,n+2})\to\mathbb Z^m\otimes{\sf L}'_{n+1}\to{\sf L}'_{n+2}\to H_0(\overline{\mathbb L}_{\bullet,n+2})\to 0.$$
Because the bracketing map is onto, $ H_0(\overline{\mathbb L}_{\bullet,n+2})=0$. Thus, if
 $H_1(\mathbb L_{\bullet,n+2})$ were equal to zero, then we would have an alternate characterization of ${\sf D}'_n$ as $H_1(\overline{\mathbb L}_{\bullet,n+2})$. Indeed, in Section~\ref{sec:vanish-thm-proof} we show that this homology does vanish:
 \begin{thm}\label{thm:vanish}
$H_1(\mathbb L_{\bullet,n};\mathbb Z)=0$.
\end{thm}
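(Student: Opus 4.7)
The plan is to attack $H_1(\mathbb L_{\bullet,n}) = 0$ by discrete Morse theory applied to the chain complex $\mathbb L_{\bullet,n}$ of rooted oriented labeled trees. Recall that the basis consists of rooted trees with $n$ labeled leaves and internal vertices of valence $\geq 3$, modulo the orientation relations, with homological degree given by the excess valence $\sum_v(\operatorname{val}(v)-3)$. The differential expands one higher-valence internal vertex into two adjacent vertices joined by an edge, summed over all splittings consistent with the cyclic structure. The point of the Morse approach is to exhibit a matching whose critical cells are concentrated in degree zero, which immediately forces $H_k(\mathbb L_{\bullet,n}) = 0$ for $k \geq 1$.

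To build the matching, I would fix a total order on the leaf labels and, for each rooted tree $T$, designate a canonical \emph{pivot} internal vertex $v$---for instance the internal vertex adjacent to the smallest-labeled leaf reached from the root by a fixed planar convention. If $v$ is trivalent, one pairs $T$ with the tree $T'$ obtained by contracting the edge from $v$ to its parent, producing a vertex of valence $\geq 4$; this raises excess valence by one. If $v$ has valence $\geq 4$, one pairs $T$ with the tree obtained by splitting off the distinguished leaf through a new trivalent vertex, which lowers excess valence by one. A tree is declared critical when its canonical pivot satisfies a terminal condition that precludes both operations.

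The verification then has three ingredients: (i) the matching is a well-defined involution between basis elements of consecutive homological degree, with a unit coefficient in the differential; (ii) the matching is acyclic, checked via a monovariant such as the depth of the canonical leaf or the lex order on vertex valences along its path to the root; and (iii) every basis element of positive homological degree admits a partner, so the only critical cells lie in degree zero. The standard discrete Morse theorem for chain complexes then yields $H_k(\mathbb L_{\bullet,n}) = 0$ for $k \geq 1$, and in particular $H_1 = 0$, as desired.

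The main obstacle is compatibility with orientation signs. Under the antisymmetry relation certain trees vanish in $\mathbb L_{\bullet,n}$, and a matched pair can accidentally cancel, making the naive matching ill-posed. I would handle this either by first lifting to an auxiliary complex where trees are unoriented and descending via a spectral-sequence or filtration argument, or by refining the pivot rule (say, using the second-smallest leaf as a tiebreaker) to avoid the degenerate configurations. Confirming acyclicity of the matching in the presence of these local modifications, and ruling out exceptional critical cells in positive degree coming from symmetric trees, is the delicate combinatorial core that I expect to consume most of the work.
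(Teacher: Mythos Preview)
Your discrete Morse framework matches the paper's, but the specific matching you sketch does not work, and the gap is more serious than orientation signs. The operation ``split off the distinguished leaf through a new trivalent vertex'' is not well-defined: an expansion of a vertex $w$ requires partitioning its incident half-edges into two blocks of size $\geq 2$, so the leaf edge must be paired with some other edge, and there is no canonical choice visible from the contracted tree. Concretely, if $T=(3,(1,2))$ with label $1$ smallest, your rule contracts the pivot edge to obtain $T'=(1,2,3)$; but the three expansions of $T'$ are $(3,(1,2))$, $(2,(1,3))$, and $(1,(2,3))$, and nothing intrinsic to $T'$ singles out the first. Any local fix you impose (e.g.\ always pair the leaf with the root-ward edge) fails to invert the contraction, so the pairing is not an involution; and the obvious attempts leave critical cells of the form $(i,J)$, $i$ minimal, in \emph{every} homological degree. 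You therefore do not get an empty Morse complex in degree~$1$.

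The paper's matching is organized around the \emph{Hall basis algorithm} rather than a local pivot. One fixes a Hall order $\prec$ on rooted trees; for a non-Hall degree-$0$ tree $J$, $\Delta_0(J)$ contracts the root edge of the maximal ``Hall problem'' in $J$ (a full subtree $((H',H''),H_2)$ with $H'\prec H''\prec H_2$ all Hall). The critical degree-$0$ cells are then exactly the Hall trees, and acyclicity follows because the Hall order strictly increases along gradient paths. A second layer $\Delta_1$ is then built case-by-case on those degree-$1$ trees not already in the image of $\Delta_0$, again contracting a carefully chosen edge, so that no degree-$1$ critical cells remain. The paper makes no attempt to kill critical cells in degree $\geq 2$; emptiness of the Morse complex in degree~$1$ is all that is needed.

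Your worry about orientation-reversing symmetries is well-founded, but the resolution is not a tiebreaker. Because the chain groups are direct sums of $\mathbb{Z}$'s and $\mathbb{Z}_2$'s, the paper splits the problem: over $\mathbb{Z}[\tfrac12]$ the symmetric trees vanish and the classical Hall construction above applies; over $\mathbb{Z}_2$ one instead uses Levine's quasi-Hall basis (allowing extra critical cells $(H,H)$ with $H$ Hall) and a modified vector field $\Delta'_0\cup\Delta'_1$, with one exceptional family of gradient paths that does not increase Hall order and must be excluded by hand. A short universal-coefficient-type lemma then assembles the $\mathbb{Z}[\tfrac12]$ and $\mathbb{Z}_2$ computations into $H_1(\mathbb{L}_{\bullet,n};\mathbb{Z})=0$.
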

With ${\sf D}'_n\cong H_1(\overline{\mathbb L}_{\bullet,n+2})$,
the map $\eta'$ turns into a map $\bar\eta\colon\mathcal T_n\to H_1(\overline{\mathbb L}_{\bullet,n+2})$ which sums over adding a rooted edge to every internal vertex of tree $t\in\mathcal T_n$.

   To complete the proof of Theorem~\ref{mainthm}, in section \ref{sec:proof} we show that $H_1(\overline{\mathbb L}_{\bullet,n+2})$ is isomorphic to $\cT_n$, via a chain map $\bbeta_\bullet$ which is a lift of $\bar\eta$.
\begin{thm}\label{lem:reducedT}
$\bbeta_\bullet$ induces an isomorphism
$\bar\eta\colon{\cT}_n\overset{\cong}{\longrightarrow} H_1(\overline{\mathbb L}_{\bullet,n+2}).$ 
\end{thm}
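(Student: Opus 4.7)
I would proceed in two stages: first, explicitly define $\bbeta_\bullet$ and verify it is a chain-level lift of $\bar\eta$; then, construct an inverse via a discrete Morse-theoretic collapse of the complex $\overline{\mathbb L}_{\bullet,n+2}$.

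For the first stage, on a trivalent tree $t\in\cT_n$ set $\bbeta_\bullet(t)=\sum_v t^{(v)}\in \mathbb L_{1,n+2}$, where the sum is over internal vertices $v$ of $t$ and $t^{(v)}$ is the rooted tree obtained by attaching a new rooted edge at $v$, making $v$ a $4$-valent vertex. The boundary $\partial t^{(v)}\in\mathbb L_{0,n+2}$ is the signed sum of the three IHX-expansions of $v$; after summing over $v$ and using the IHX relation on $t$, these rewrite as $\sum_v X_{\ell(v)}\otimes B'_v(t)$ lying in the image of the bracket map $\Z^m\otimes \mathbb L_{0,n+1}\to\mathbb L_{0,n+2}$. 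Hence the image of $\bbeta_\bullet(t)$ in $\overline{\mathbb L}_{1,n+2}$ is a cycle, and the same accounting shows that IHX and antisymmetry on $\cT_n$ land in $\partial\overline{\mathbb L}_{2,n+2}$. Combining this with Theorem~\ref{thm:vanish} and the long exact sequence identifies $H_1(\overline{\mathbb L}_{\bullet,n+2})\cong\sD'_n$ with $[\bbeta_\bullet(t)]\mapsto \eta'(t)$, confirming that $\bbeta_\bullet$ is a chain-level lift of $\bar\eta$.

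For the second stage, I would use discrete Morse theory — as hinted at by the keywords — to build an acyclic matching on the generators of $\overline{\mathbb L}_{\bullet,n+2}$. The idea is to pair each rooted tree of positive excess valence with a neighboring tree obtained by a preferred local move (expanding or contracting the edge closest to the root at the first non-trivalent vertex encountered), declaring critical the degree-$1$ trees whose sole $4$-valent vertex is not root-adjacent. If the matching is arranged so that its degree-$1$ critical cells biject with trivalent trees $t\in\cT_n$ — where the critical representative of $t$ is, for example, $t^{(v_0)}$ for a canonical internal vertex $v_0$ — and the induced boundary from critical degree-$2$ cells encodes precisely the IHX relations, then the reduced complex is concentrated in degree $1$ with $H_1\cong\cT_n$. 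Combined with $\bbeta_\bullet$, this produces the desired isomorphism.

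The principal obstacle is constructing such a matching compatibly with the bracket-map quotient defining $\overline{\mathbb L}_\bullet$ from $\mathbb L_\bullet$. Since generators of $\overline{\mathbb L}_\bullet$ are cosets of rooted trees identified by modding out the bracket-image, a naive matching on $\mathbb L_\bullet$ may pair two representatives of the same coset, or fail to descend to a well-defined matching on the quotient. One must either descend a bracket-equivariant matching from $\mathbb L_\bullet$ to $\overline{\mathbb L}_\bullet$ or work on a canonical transversal (for instance, rooted trees whose root-adjacent label is lexicographically minimal). Verifying acyclicity across the whole diagram, and confirming that the surviving degree-$1$ critical cells realize $\cT_n$ with exactly the IHX and antisymmetry relations — and no spurious extras — is where the substantive technical work lies.
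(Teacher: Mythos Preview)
Your Stage~1 is essentially what the paper already does in Section~2 (the discussion around Lemma~\ref{lem:eta-chain-map}), so that part is fine.

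Your Stage~2 has the right overarching idea---a discrete gradient vector field on $\overline{\mathbb L}_{\bullet,n+2}$---but your proposed matching and endgame differ substantially from the paper's, and there is a genuine gap.

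The paper's vector field is not built from expanding or contracting near the first non-trivalent vertex. Instead, for each rooted tree $J$ one looks at the \emph{underlying} unrooted tree $t$ (remove the root edge), chooses once and for all a basepoint on $t$ fixed by $\operatorname{Aut}(t)$, and then slides the root edge one step \emph{away} from the basepoint. Critical generators are the trees $t^b$ (root attached at the basepoint) together with an extra family $\langle T,T\rangle^s$ coming from symmetric trees $\langle T,T\rangle$ whose basepoint lies at the midpoint of an edge. The Morse complex is \emph{not} concentrated in degree~$1$, and its degree-$1$ critical cells do \emph{not} biject with generators of $\cT_n$. Rather, the paper computes $\phi^\Delta\circ\bbeta_\bullet$ explicitly---it sends $t\mapsto t^b$ for asymmetric $t$ and $\langle T,T\rangle\mapsto 2\langle T,T\rangle^s$ or $0$ depending on parity---and then analyzes the kernel and cokernel of this chain map, showing the cokernel is acyclic and the degree-$0$ kernel maps to zero in $H_0(\mathbb T_\bullet)\cong\cT_n$.

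The gap in your sketch is that you never confront the symmetric trees $\langle T,T\rangle$, and this is exactly where the argument lives. All of the kernel and cokernel in the paper's analysis consists of symmetric trees and their $2$-torsion; the parity of $\deg(T)$ and whether $T$ has an orientation-reversing automorphism govern the entire calculation. A matching of your type, organized purely by the position of the first non-trivalent vertex relative to the root, has no mechanism for choosing a canonical vertex $v_0$ on a tree with nontrivial automorphisms---there may be no fixed internal vertex at all---and so your proposed bijection ``critical degree-$1$ cell $\leftrightarrow$ $t^{(v_0)}$'' breaks down precisely on $\langle T,T\rangle$. The paper's basepoint convention (vertex if one is fixed, edge-midpoint otherwise) is engineered to handle this, and the resulting extra critical cells $\langle T,T\rangle^s$ and $(T,T)$ are then shown to contribute an acyclic subcomplex.

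Your worry about the quotient is legitimate in principle but is not where the difficulty lies: the paper's $\Delta$ is defined directly on generators of $\overline{\mathbb L}_\bullet$, and the only way a root-slide could fail is if it slid onto a univalent vertex---but such a $J$ is already of the form $(i,J')$ and hence zero in $\overline{\mathbb L}_\bullet$. So there is no coset ambiguity to manage; the real work is the symmetric-tree bookkeeping you omitted.
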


The proofs of Theorems~\ref{thm:vanish} and \ref{lem:reducedT} use the powerful technique of discrete Morse theory for chain complexes, which we discuss in section ~\ref{sec:discretemorse}.  Roughly, a discrete gradient vector field on a chain complex is a list of pairs of generators each giving combinatorial data parameterizing an atomic acyclic subcomplex.  These acyclic subcomplexes can then be modded out to obtain a simpler quasi-isomorphic complex, called the Morse complex. In practice, it is often possible to find a discrete vector field that drastically reduces the size of the complex being studied.  Indeed, in order to show $H_1({\mathbb L}_{\bullet,n+2})=0$, we construct a discrete vector field for which the Morse complex is $0$ in degree $1$! This vector field is
inspired by the Hall basis algorithm for the free Lie algebra. Its lowest degree vectors are defined directly from this algorithm, with a suitably nice choice of a Hall order on trees. One of the conditions on a gradient vector field is that there are no ``gradient loops," which in practice is often the trickiest thing to verify. In this case, the fact that the Hall basis algorithm ``works" allows us to rule out loops involving these lowest degree vectors. The complete vector field is a natural extension of the lowest degree case, and ruling out gradient loops in general involves an exhaustive case analysis. 

 Actually the previous paragraph simplifies the real story somewhat, in that discrete Morse theory works well for \emph{free} chain complexes, but the chain groups ${\mathbb L}_{\bullet,n+2}$ have both $\Z$ and $\z$ direct summands. To get around this, we actually construct two slightly different vector fields, one for  $\zh\otimes\mathbb L_{\bullet,n+2}$ and one for $\z\otimes\mathbb L_{\bullet,n+2}$. The former complex kills symmetric trees of degree $0$, so that $H_0(\mathbb L_{\bullet,n+2};\zh)\cong\zh\otimes{\sf L}_n$ is the free Lie algebra over $\zh$. The second complex is more closely tied to the free quasi-Lie algebra, and indeed the vector field we construct for the  case of $\z$-coefficients is closely aligned with Levine's generalization of the Hall basis algorithm to the quasi-Lie case \cite{L2}.  

Now we will discuss the proof of Theorem~\ref{lem:reducedT}. The first step is to generalize $\eta'$ to a chain map $${\bbeta}_\bullet\colon \mathbb T_\bullet\to\overline{\mathbb L}_{\bullet+1},$$
where $\mathbb T_\bullet$ is a chain complex of \emph{unrooted} trees whose zeroth homology is $\mathcal T_n$. Recall that the chain complex $\overline{\mathbb L}_{\bullet,n+2}$ is defined as a quotient of an abelian group of rooted trees by the image of the bracketing map.
We construct a discrete vector field $\Delta$ on $\overline{\mathbb L}_{\bullet,n+2}$, essentially by picking a basepoint and pushing the root away from it, when possible. (This is subtle because trees have nontrivial automorphisms, so one has to be careful doing this.) In any event,
this gives rise to the Morse complex 
$\overline{\mathbb L}^\Delta_\bullet$, so that the composition
$$ \mathbb T_\bullet\to\overline{\mathbb L}_{\bullet+1}\to \overline{\mathbb L}^\Delta_{\bullet+1},$$
has kernel and cokernel which are  easy to analyze. In particular, $\bbeta_\bullet$ induces an isomorphism of homologies in degree $0$, though not in higher degrees. 

The \emph{signature} of a tree is an $m$-tuplet $\sigma=(n_1,\ldots,n_m)$ that records the multiplicities of each label $1,...,m$. Definining $|\sigma|=n_1+\cdots+n_m$,
$\mathcal T_n=\oplus_{|\sigma|=n+2} \mathcal T_\sigma$, where $\cT_\sigma$ is the subgroup of $\mathcal T_n$ spanned by trees with signature $\omega$.
 For the reader's amusement, in Figure~\ref{tautable}, we list computer calculations of the $\cT_\sigma$ groups for small values of $\sigma=(j,k)$.
 
\begin{figure}
{
$
\begin{array}{c|llllllll}
&2&3&4&5&6&7&8&9\\
\hline
2&\Z&{ \z}&\Z&{ \z}&\Z              &{ \z}&\Z              &{ \z}\\
3&{\z}&\Z&{ \z}&\Z&\Z\oplus{ \z}&\Z&\Z\oplus{ \z}&\Z^2\\
4&\Z&{ \z}&\Z^2&\Z\oplus{ \z}&\Z^3&\Z^2\oplus{ \z^2}&\Z^5&\Z^3\oplus{ \z^2}\\
5&{\z}&\Z&\Z\oplus{ \z}&\Z^3&\Z^3\oplus{ \z^2}&\Z^6&\Z^7\oplus{ \z^2}&\Z^{11}\\
6&\Z&\Z\oplus{\z}&\Z^3&\Z^3\oplus{\z^2}&\Z^9&\Z^9\oplus{\z^3}&\Z^{19}&\Z^{22}\oplus{\z^5}\\
7&{\z}&\Z&\Z^2\oplus{\z^2}&\Z^6&\Z^9\oplus{\z^3}&\Z^{19}&\Z^{28}\oplus{\z^5}&{\Z^{47}}\\
8&\Z&\Z\oplus{\z}&\Z^5&\Z^7\oplus{\z^2}&\Z^{19}&\Z^{28}\oplus{\z^5}&{\Z^{58}}&\\
9&{\z}&\Z^2&\Z^3\oplus{\z^2}&\Z^{11}&\Z^{22}\oplus{\z^5}&{\Z^{47}}&&\\
10&\Z&\Z\oplus{\z}&\Z^7&\Z^{13}\oplus{\z^3}&{\Z^{36}}&&&\\
11&{\z}&\Z^2&\Z^5\oplus{\z^3}&{\Z^{18}}&&&&\\
12&\Z&\Z^2\oplus{\z}&\Z^9&&&&&
\end{array}
$}
\caption{A computer generated table of the groups $\mathcal T_{(j,k)}$.}\label{tautable}
\end{figure}

{\bf Acknowledgements:} This paper was written while the first two authors were visiting the third author at the Max-Planck-Institut f\"ur Mathematik in Bonn. They all thank MPIM for its stimulating research environment and generous support. The first author was partially supported by NSF grant  DMS-0604351 and the last author was also supported by NSF grants DMS-0806052 and DMS-0757312.
 We thank Daniel Moskovich for comments on an early draft.

%==============================================================

\section{Tree homology}\label{sec:Levine}
In this section, we interpret Levine's conjecture in a homological setting.
It is well-known that the free (quasi)-Lie algebra can be regarded as the zeroth
homology of a complex of rooted oriented trees (of arbitrary
valence $\neq 2$), with univalent vertices labeled by the
generators, since the boundary of a tree with a $4$-valent vertex
is precisely a Jacobi relator. Over the rationals, all the
homology is concentrated in degree zero (Proposition~\ref{nohom}), but the integral homology appears to be unknown.

\begin{defn}
Throughout this paper all trees are allowed to have vertices of
any valence other than 2, and are considered up to isomorphism. 
%(So the term ``tree," should be taken to mean ``isomorphism class of tree.")

 An \emph{orientation} of a tree is an
equivalence class of orderings of the edges, where two orderings are
equivalent if they differ by an even permutation. Each tree has
at most two orientations, and one is said to be the negative of
the other. Note that although the orientation given here seems different from
the standard orientation of unitrivalent trees or graphs given by
ordering the half-edges around a vertex, according to \cite{CV}
Proposition 2, these are equivalent notions for odd-valent trees.

Following the previous notation for unitrivalent trees, labels from
the index set $\{1,2,\ldots,m\}$ are used to decorate univalent
vertices, and a \emph{rooted tree} has all univalent vertices labeled
except for a single un-labeled \emph{root} univalent vertex. All non-root
univalent vertices are called \emph{leaves}, and all vertices of
valence $\geq 3$ all called \emph{internal vertices}.

The \emph{bracket} of two oriented rooted trees is the rooted tree
$(J_1,J_2)$ defined by identifying the roots of $J_1$ and $J_2$ and
attaching an edge to the identified vertex, the other vertex of this
edge being the new root. The orientation is given by ordering the
root edge first, then the edges of $J_1$ in the order prescribed by
$J_1$'s orientation, and finally the edges of $J_2$ in the order
prescribed by its orientation.

The \emph{homological degree} of a tree is defined to be $\sum_v (|v|-3)$
where the sum is over all internal vertices $v$, and $|v|$ represents the valence
of the vertex.

%Similarly, the inner product of rooted
%trees $J_1$ and $J_2$ is the un-rooted tree $\langle J_1,J_2
%\rangle$ defined by identifying the roots of $J_1$ and $J_2$
%together to a non-vertex point.
\end{defn}

Rooted trees will usually be denoted by capital letters, and unrooted trees by lower case letters.

\begin{defn}
In the following chain complexes, we divide by the relation
$(T,-\mathrm{or})=-(T,\mathrm{or})$ for every oriented tree $(T,\mathrm{or})$. 

For all $k\geq 2$, let $\mathbb L_{\bullet,k}$ be the chain complex spanned by oriented rooted
trees with $k+1$ total univalent vertices, and the $k$
leaves labeled by the cardinality $m$ index set. The trees are
graded by homological degree. 

Let $v$ be an internal vertex of a tree $J$ of valence $\geq 4$, and let $P$ be an unordered partition of the half-edges incident to $v$ into two sets each having at least two elements.  $P$ determines an \emph{expansion} of $J$, where the vertex $v$ expands into a new edge $e$, and the half-edges are distributed to the ends of $e$ according to the partition $P$. The induced orientation of an expansion is defined by numbering the new edge first, and increasing the numbering of the other edges by one. 
 
The
boundary operator $\partial\colon \mathbb L_{\bullet,k}\to \mathbb
L_{\bullet-1,k}$ is defined by setting $\partial J$ equal to the sum of all expansions of $J$. See Figure~\ref{ihxfig}, which shows the three expansions of a $4$-valent vertex. Note that $\partial$ vanishes on degree zero trees since they have only trivalent internal vertices.

\begin{figure}
\begin{center}
$\begin{minipage}{.7in}
\includegraphics[width=.7in]{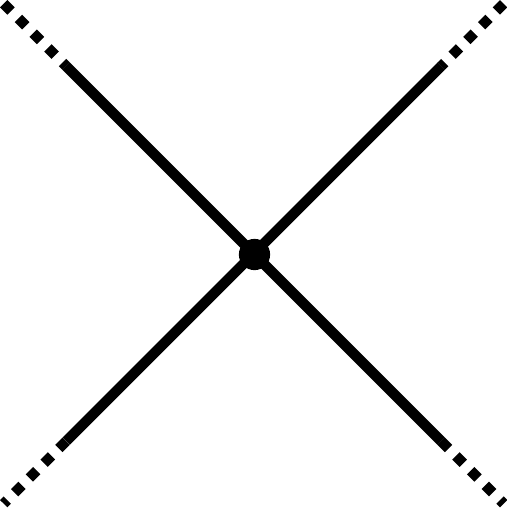}
\end{minipage}\overset{\partial}\mapsto
\begin{minipage}{.7in}
\includegraphics[width=.7in]{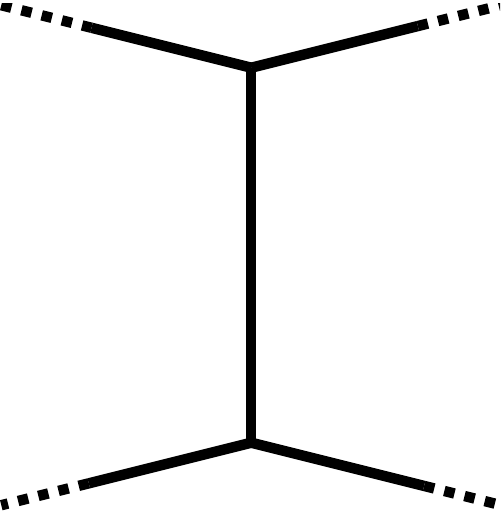}
\end{minipage}+
\begin{minipage}{.7in}
\includegraphics[width=.7in]{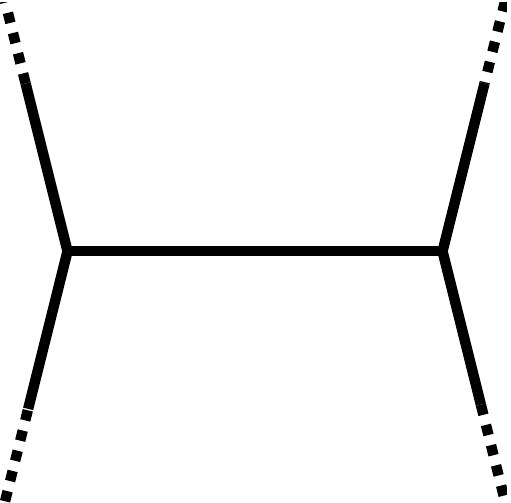}
\end{minipage}+
\begin{minipage}{.7in}
\includegraphics[width=.7in]{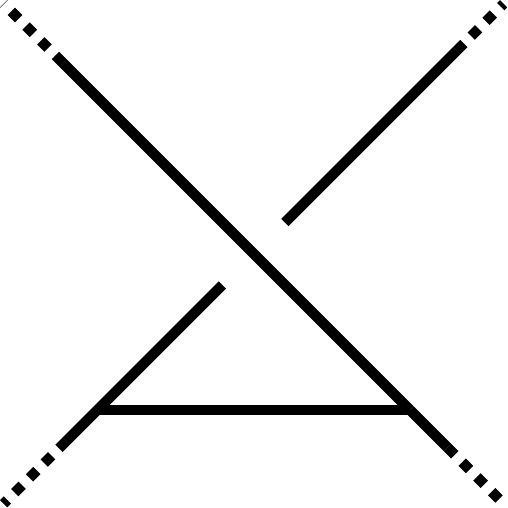}
\end{minipage}
$
\end{center}
\caption{An IHX relation appearing as the image of $\partial$.}\label{ihxfig}
\end{figure}

The chain complex $\mathbb T_{\bullet,k}$ is spanned by oriented labeled
\emph{un}rooted trees with $k$ leaves. The trees are again graded by homological degree. The boundary operator $\partial\colon\mathbb T_{\bullet,k}\to\mathbb T_{\bullet-1,k}$ is defined as before by setting $\partial t$ equal to the sum of all expansions of $t$.

The chain complex $\overline{\mathbb L}_{\bullet,k}$ is the quotient complex of ${\mathbb L}_{\bullet,k}$
by the subcomplex spanned by trees of the form $(i,J)=\tree{}{i}{J}$. Here the notation $i$ stands for the rooted tree having a single 
$i$-labeled leaf.
\end{defn}

Some of the homology groups of these chain complexes turn out to be relevant to us.

\begin{prop}\label{prop:isomorphisms}
We have the following isomorphisms ($\Z$-coefficients):
\begin{enumerate}
\item $H_{0}(\mathbb L_{\bullet,n})\cong {\sf L}'_n$
\item $H_{0}(\mathbb T_{\bullet,n+2})\cong\mathcal T_n$
\item $H_1(\overline{\mathbb L}_{\bullet,n+2})\cong{\sf D}'_n$ for $n>0$.
\end{enumerate}
\end{prop}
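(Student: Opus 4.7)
The plan is to treat (i) and (ii) by directly unpacking the low-degree parts of the chain complexes, and to deduce (iii) from a short exact sequence of chain complexes combined with part (i) and Theorem~\ref{thm:vanish}.

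For (i), the degree-zero part $\mathbb{L}_{0,n}$ is spanned (subject to the sign-reversal relation $(T,-\mathrm{or})=-(T,\mathrm{or})$) by oriented trivalent rooted trees with $n$ labeled leaves. The bracketing construction sends such a tree to the iterated bracket read from the root, producing a surjection onto $\sL'_n$. For odd-valent trees, ordering edges is equivalent to the half-edge rotation convention at each vertex (Proposition~2 of \cite{CV}), so swapping the two subtrees at an internal vertex flips the sign; this realizes the quasi-Lie antisymmetry $[X,Y]=-[Y,X]$ (and in particular gives $2(X,X)=0$). A degree-one chain has a unique $4$-valent vertex $v$, and its three $(2,2)$-expansions produce precisely the three terms of Jacobi acting on the three subtrees at $v$, with signs matched by the prescribed expansion-orientation rule. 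Hence $H_0(\mathbb{L}_{\bullet,n})$ has the presentation of $\sL'_n$. Claim (ii) follows by the identical analysis on \emph{unrooted} trees, recovering $\cT_n$ from its defining presentation.

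For (iii), I would set up the short exact sequence of chain complexes
$$
0\longrightarrow \Z^m\otimes\mathbb{L}_{\bullet,n+1}\longrightarrow \mathbb{L}_{\bullet,n+2}\longrightarrow\overline{\mathbb{L}}_{\bullet,n+2}\longrightarrow 0,
$$
in which the left map sends $X_i\otimes J$ to $(i,J)$. This is a chain map because the new internal vertex created in $(i,J)$ is $3$-valent and contributes no terms to $\partial$, and the image is by definition the subcomplex generated by $(i,J)$-trees. Injectivity is where the hypothesis $n>0$ enters: for $n>0$ the root-neighbor of every $J\in\mathbb{L}_{\bullet,n+1}$ is internal, so the $i$-leaf of $(i,J)$ is canonically recognizable as the unique leaf adjacent to the root vertex, which allows us to recover $i$ and $J$ unambiguously. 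The associated long exact sequence, using freeness of $\Z^m$ and part (i) to rewrite the $H_0$ terms as $\Z^m\otimes\sL'_{n+1}$ and $\sL'_{n+2}$, reads
$$
\Z^m\otimes H_1(\mathbb{L}_{\bullet,n+1})\to H_1(\mathbb{L}_{\bullet,n+2})\to H_1(\overline{\mathbb{L}}_{\bullet,n+2})\to \Z^m\otimes\sL'_{n+1}\xrightarrow{[\cdot,\cdot]}\sL'_{n+2}.
$$
Theorem~\ref{thm:vanish} kills the two leftmost groups, so $H_1(\overline{\mathbb{L}}_{\bullet,n+2})$ is identified with $\Ker[\cdot,\cdot]=\sD'_n$.

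The substantive input is Theorem~\ref{thm:vanish}; granted that, (iii) is formal homological algebra, and the main obstacle has been offloaded to the discrete-Morse-theoretic arguments in later sections. For (i) and (ii) the only delicate calculation is the sign bookkeeping at a single $4$-valent vertex matching expansions against the Jacobi relation, which is local and standard. The restriction $n>0$ in (iii) is sharp: when $n=0$ the bracketing-with-a-leaf map is already non-injective at degree zero, since for instance $X_i\otimes X_j+X_j\otimes X_i\mapsto (i,j)+(j,i)=0$ by antisymmetry.
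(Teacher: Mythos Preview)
Your proof is correct and follows essentially the same approach as the paper: parts (i) and (ii) by direct identification of the degree $0$ and $1$ chain groups with the presentation of $\sL'_n$ and $\cT_n$, and part (iii) via the short exact sequence of chain complexes together with Theorem~\ref{thm:vanish}. Your treatment is in fact slightly more explicit than the paper's on two points: you spell out why the chain-level bracketing map is injective precisely when $n>0$ (the $i$-leaf being the unique leaf adjacent to the root's neighbor), and you note the failure at $n=0$ concretely. One very minor remark: to conclude $H_1(\overline{\mathbb L}_{\bullet,n+2})\cong\sD'_n$ you only need $H_1(\mathbb L_{\bullet,n+2})=0$, not also $H_1(\mathbb L_{\bullet,n+1})=0$, so ``kills the two leftmost groups'' is more than required---but of course Theorem~\ref{thm:vanish} does give both.
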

\begin{proof}
The first isomorphism comes from the fact that in homological degree $0$ all
trees are trivalent, and hence they are all cycles. The image of
the boundary operator is precisely the submodule of IHX relators,
since $\partial$ expands a $4$-valent vertex into an IHX relator (Figure~\ref{ihxfig}),
where the signs are verified in \cite{CV} p.1207. The second
isomorphism is similar. 

We proceed to explain the third isomorphism, which depends on Theorem~\ref{thm:vanish}.
Recall that by definition ${\sf D}'_n$ is the kernel of the bracketing map 
$$
%{\sf Br}\colon 
\Z^m\otimes{\sf L}'_{n+1}\to {\sf L}'_{n+2}
$$
via the identification $\sL'_1\cong\Z^m$.This bracketing operation on the quasi-Lie algebra lifts to a chain map 
$$
\br_\bullet\colon \mathbb Z^m\otimes \mathbb L_{\bullet,n+1}\to \mathbb L_{\bullet,n+2}
$$ 
which sends $X_i\otimes J$ to the oriented tree $(i,J)$.  Note that for $n>0$, $\br_\bullet$ is \emph{injective} at the chain level. Thus we get a short exact sequence of chain complexes:
$$
0\to \Z^m\otimes \mathbb L_{\bullet,n+1}\to\mathbb L_{\bullet,n+2}\to \overline{\mathbb L}_{\bullet,n+2}\to 0
$$
where $\overline{\mathbb L}_{\bullet,n+2}$ is by definition the cokernel of $\br_\bullet$. Using statement~(i) of the proposition, this gives rise to the long exact sequence:
$$H_1(\mathbb L_{\bullet,n+2})\to H_1(\overline{\mathbb L}_{\bullet,n+2})\to\mathbb Z^m\otimes{\sf L}'_{n+1}\to{\sf L}'_{n+2}\twoheadrightarrow H_0(\overline{\mathbb L}_{\bullet,n+2})$$
 We will prove later (Theorem~\ref{thm:vanish})  that $H_1(\mathbb L_{\bullet,n+2})=0$. Since the bracketing map is onto, we get the short exact sequence
$$0\to H_1(\overline{\mathbb L}_{\bullet,n+2})\overset{\kappa}{\to}\mathbb Z^m\otimes{\sf L}'_{n+1}\to{\sf L}'_{n+2}\to 0$$
where $\kappa$ is the connecting homomorphism from the long exact sequence.
Hence ${\sf D}'_n\cong H_1(\overline{\mathbb L}_{\bullet,n+2})$.
\end{proof}

Let us interpret $\eta'_n$ in this context. Clearly $\eta'_n$ lifts uniquely to a map $\bar\eta_n$ as in the diagram below:
$$
\xymatrix{
\cT_n\ar@{-->}[d]_{\bar\eta_n}\ar[dr]^{\eta'_n}&\\
H_1(\overline{\mathbb L}_{\bullet,n+2})\ar@{>->}[r]^{\kappa}&\Z^m\otimes {\sf L}'_{n+1}\ar@{->>}[r]&{\sf L}'_{n+2}
}
$$
Suppose $t\in\mathbb T_{0,n+2}$ is an oriented tree.
Define $t^r\in\mathbb L_{1,n+2}$ to be the sum of adding a root edge, numbered first in the orientation, to all of the internal vertices of $t$:

$$\underset{t}{\begin{minipage}{1in}\includegraphics[width=1in]{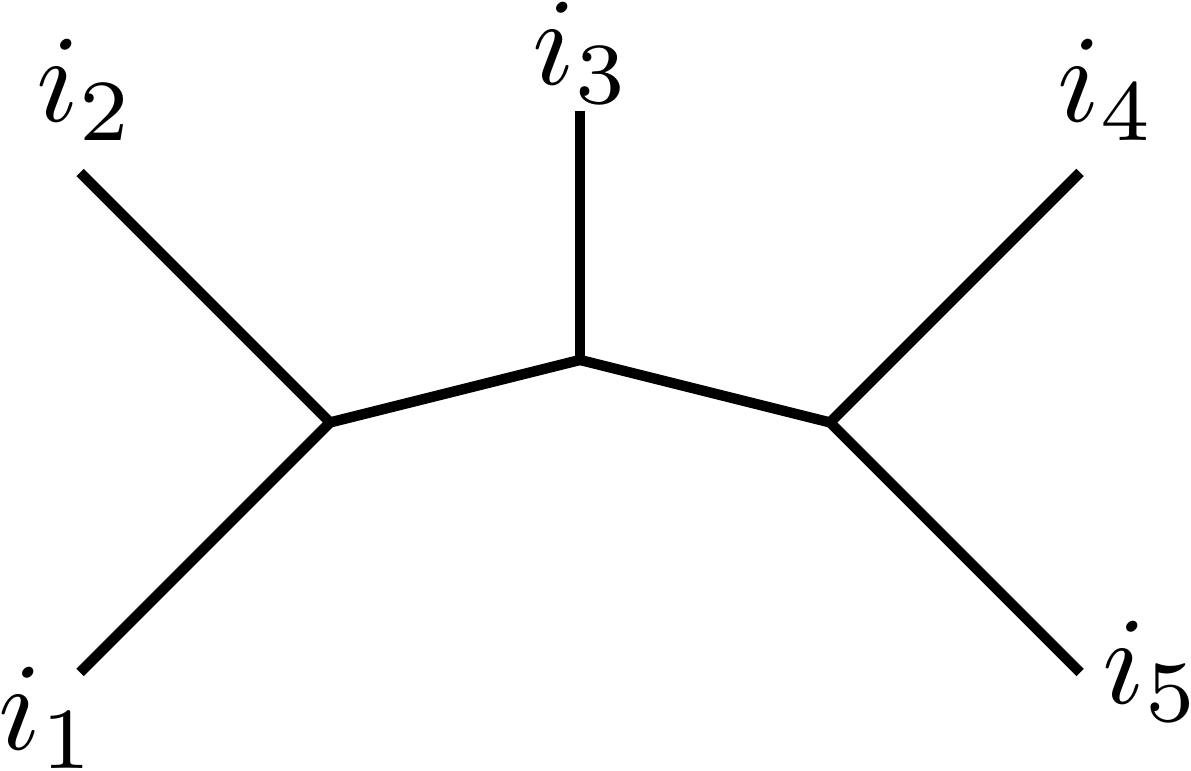}\end{minipage}}\mapsto\underset{t^r}{\begin{minipage}{1in}\includegraphics[width=1in]{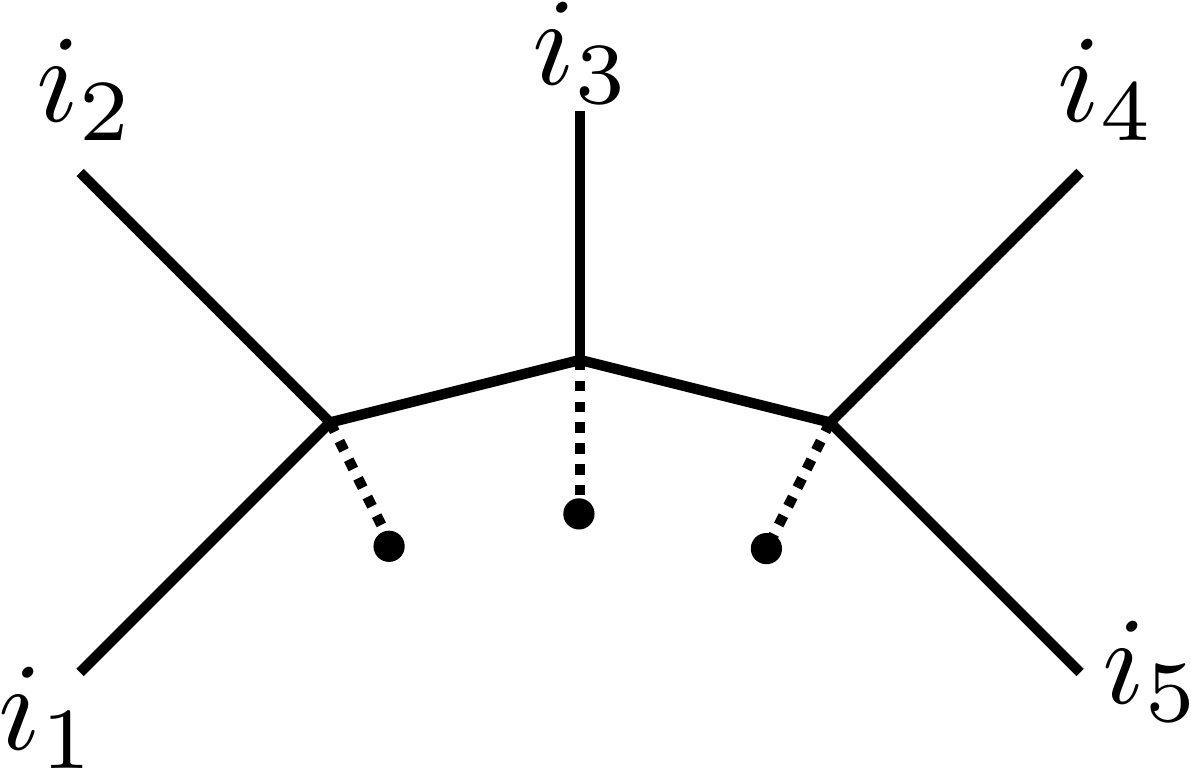}\end{minipage}}$$

The dotted edges mean that we are summing over putting the root edge in each position.
 We claim that $\bar \eta_n(t)=t^r$. So we must verify that $\kappa(t^r)=\eta_n'(t)$. The map $\kappa$ is defined via the snake lemma as in the diagram below.
$$ 
 \xymatrixrowsep{0pc}\xymatrix{
 \eta'_n(t)\ar@{|->}[r]&\sum_v(\ell_v(t),B'_v(t))&\\
 \Z^m\otimes\mathbb L_{0,n+1}\ar@{>->}[r]&\mathbb L_{0,n+2}\ar@{->>}[r]&\overline{\mathbb L}_{0,n+2}\\
 &&\\
  &&\\
   &&\\
 \Z^m\otimes\mathbb L_{1,n+1}\ar[uuuu]^{1\otimes \partial}\ar@{>->}[r]& \mathbb L_{1,n+2}\ar[uuuu]^{\partial}\ar@{->>}[r]&\overline{\mathbb L}_{1,n+2}\ar[uuuu]^{\bar\partial}\\
 &t^r\ar@{|->}@/^5pc/[uuuuuu]\ar@{|->}[r]&t^r
 }
$$
Here we use that $\partial t^r=\sum_v(\ell_v(t),B'_v(t))$ because of internal cancellation of the root:
$$
\partial (t^r)=\begin{minipage}{1.2in}\includegraphics[width=1.2in]{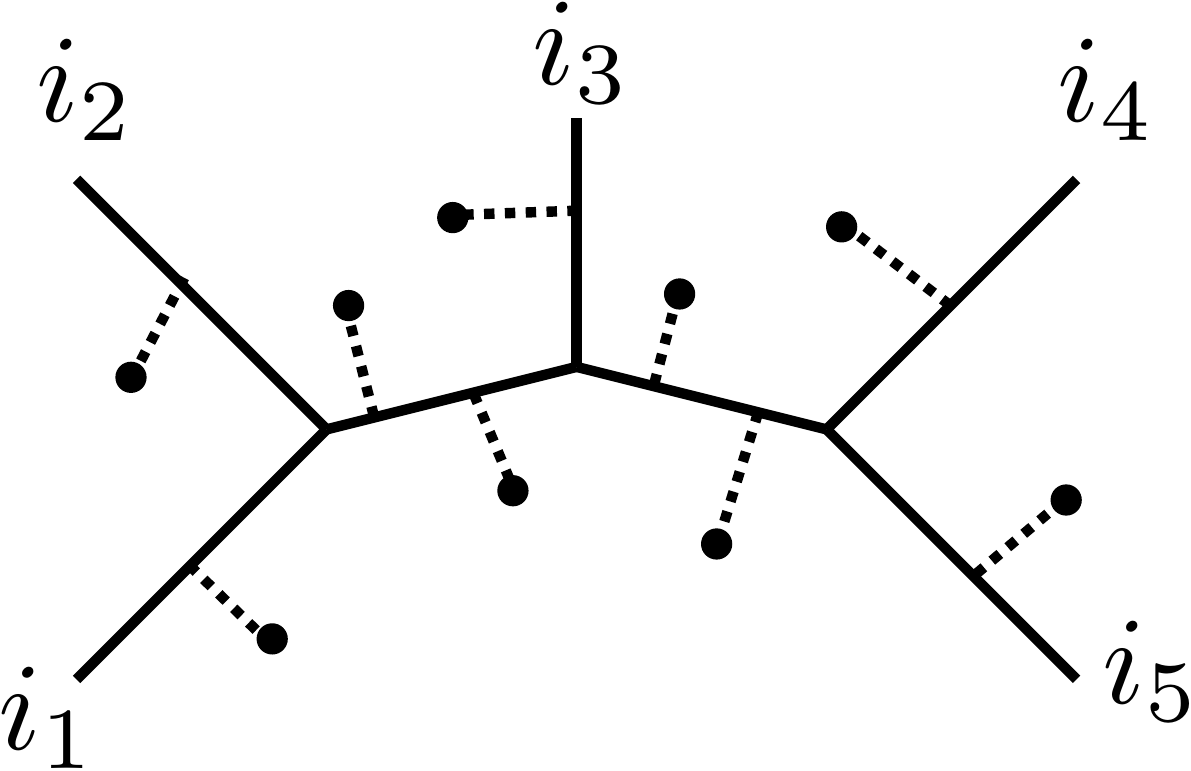}\end{minipage}=
\begin{minipage}{1.2in}\includegraphics[width=1.2in]{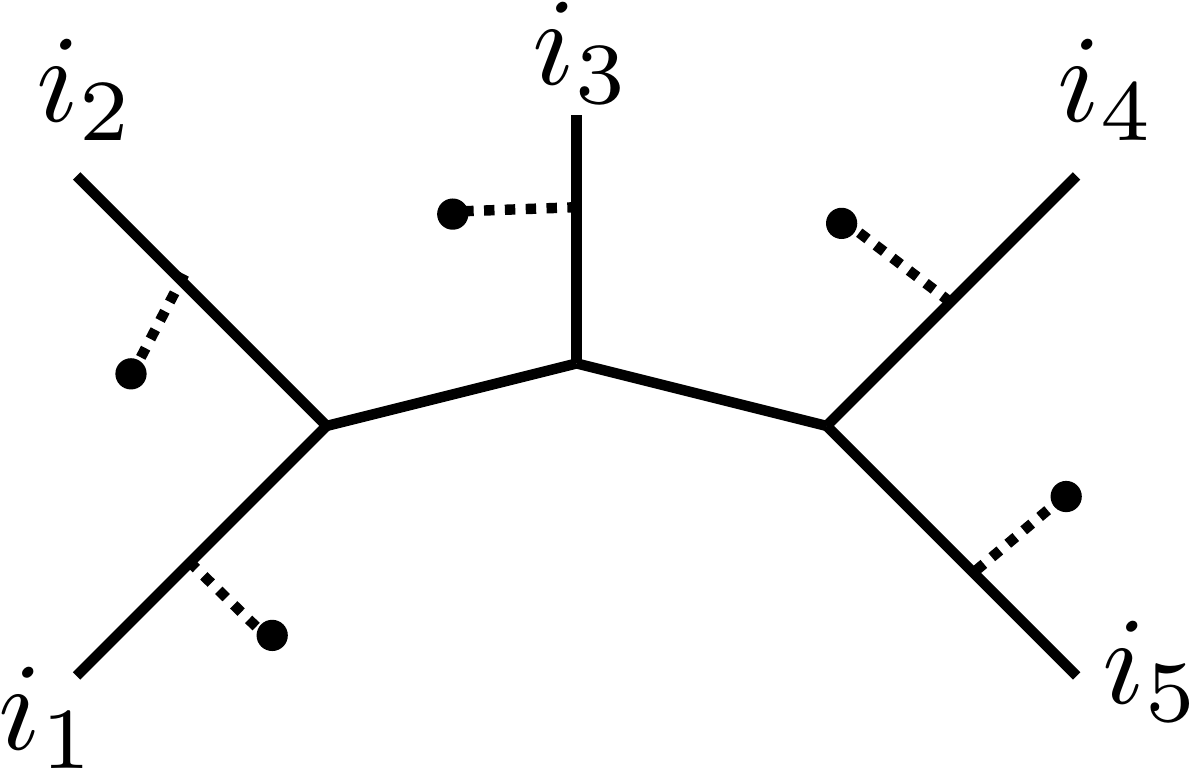}\end{minipage}=\sum_v(\ell_v(t),B'_v(t))
$$
Tracing through the diagram indeed shows that $t^r\mapsto\eta'_n(t)$.
 
In fact, $\bar\eta$ extends to a map of chain complexes, a fact which we will need in section~\ref{sec:proof}. Define $\bbeta_\bullet\colon \mathbb T_{\bullet,n}\to\overline{\mathbb L}_{\bullet+1,n}$ by letting $\bbeta_\bullet(t)$ be the sum over attaching a root edge to every internal vertex of $t$, mutiplied by the sign $(-1)^{\operatorname{deg}(t)}$.

\begin{lem}\label{lem:eta-chain-map}
$\bbeta_\bullet\colon \mathbb T_{\bullet,n}\to\overline{\mathbb L}_{\bullet+1,n}$ is a chain map.
\end{lem}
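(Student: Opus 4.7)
Writing $\bbeta_\bullet(t) = (-1)^{\deg t}\, B(t)$ where $B(t) := \sum_v t^{+v}$ is the unsigned sum over attachments of a root edge at each internal vertex $v$ of $t$, and using $\deg(\partial t) = \deg(t) - 1$, the chain map identity $\partial \bbeta_\bullet(t) = \bbeta_\bullet(\partial t)$ reduces to proving
$$
\partial B(t) + B(\partial t) = 0 \quad \text{in } \overline{\mathbb L}_{\bullet+1,n}.
$$
My plan is to pair and cancel terms on both sides via a case analysis keyed to which internal vertex gets expanded.

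A summand of $\partial B(t)$ is obtained by choosing an internal vertex $v$ of $t$ and then expanding some internal vertex of $t^{+v}$. If the expansion takes place at a vertex $w \neq v$ (Case B), the resulting rooted tree coincides with the summand of $B(\partial t)$ obtained by first expanding $w$ in $t$ and then adding the root at $v$; the only difference is the order of the edges. Specifically, the Case B ordering $(n, r, e_1, \ldots, e_k)$ differs from the $B(\partial t)$ ordering $(r, n, e_1, \ldots, e_k)$ by the transposition of the new edge $n$ with the root edge $r$, so the two contributions have opposite orientations and cancel.

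If instead the expansion is at $v$ itself (Case A), the partition $(A,B)$ of the $|v|+1$ half-edges at $v$ has the root half-edge $r$ in (say) $A$. When $|A|\geq 3$ and $|B|\geq 2$, the pair $(A \setminus \{r\}, B)$ is already a valid expansion partition of $v$ in $t$; the resulting tree agrees with the summand of $B(\partial t)$ in which one expands $v$ with that partition and then attaches the root at the new vertex on the $A$-side. The same transposition-of-orderings argument as in Case B shows these cancel, and symmetrically for $r$ on the $B$-side.

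The delicate remaining case is $|A|=2$, where the root is paired with exactly one other half-edge $h$ of $v$. The resulting rooted tree is obtained from $t$ by inserting a new trivalent vertex $u$ in the middle of the edge $e$ containing $h$ and attaching a root edge to $u$. If the other endpoint of $e$ is a leaf with label $i$, this tree has the form $(i, J)$ and thus vanishes in $\overline{\mathbb L}_{\bullet+1,n}$. Otherwise, the other endpoint of $e$ is another internal vertex $w$, and the identical rooted tree arises in this same case from the pair $(w, h')$, where $h'$ is the half of $e$ at $w$. The main technical obstacle is the sign check here: I will show that in the unified labeling of the final tree, the two edge orderings differ by transposing the ``new'' edge (in position $1$) with the original edge $e$ (at a later position), a single transposition, so the two contributions have opposite orientations and cancel. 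Once this last cancellation is established, every summand of $\partial B(t) + B(\partial t)$ vanishes in $\overline{\mathbb L}_{\bullet+1,n}$, and the lemma follows.
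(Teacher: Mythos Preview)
Your proof is correct and follows essentially the same approach as the paper. Both arguments separate the expansions of $\bbeta(t)$ into those that push the root onto an edge of $t$ (your Case~A with $|A|=2$, which cancel in pairs across the two endpoints of an interior edge or vanish in $\overline{\mathbb L}$ when the edge is leaf-adjacent) and those that genuinely expand the underlying tree (your Case~B and Case~A with $|A|\geq 3$), which match the terms of $\bbeta(\partial t)$; the paper is terser and defers the orientation check to a single remark about the factor $(-1)^{\deg t}$, whereas you carry out the transposition bookkeeping explicitly.
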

 \begin{proof}
 For any internal 
vertex $v$ of an unrooted tree $t$, let $\alpha_v(t)$ denote the rooted tree gotten by attaching a root edge to $v$, so that 
$\bbeta_\bullet(t)=\sum_v \alpha_v(t)$.
By definition $\partial\bbeta_\bullet(t)$ is a sum of expansions of $\bbeta_\bullet(t)$, and since those expansions which push the root edge onto an interior edge of $t$ all cancel in pairs, the only terms that are relevant are those where the underlying tree $t$ is expanded. Fix such an expansion, where the vertex $v_0$ of $t$ gets expanded into two vertices  $v_1$ and $v_2$, connected by an edge. Call this expanded tree $t^e$. Then if $v$ is a vertex of $t$ that is not $v_0$, $\partial\alpha_v(t)$ contains one term, $\alpha_v(t^e)$, corresponding to the fixed expansion $t^e$. On the other hand $\partial\alpha_{v_0}(t)$ contains two such terms: $\alpha_{v_1}(t^e)$ and $\alpha_{v_2}(t^e)$. So for every internal vertex of the expanded tree $t^e$, there is exactly one summand where the root attaches to it. 
 Thus $\partial\bbeta_\bullet(t)=\sum\bbeta_\bullet(t^e)$ where the sum is over all expansions of $t$, and so by definition 
 $\partial\bbeta_\bullet(t)=\bbeta_\bullet\partial(t)$. The extra factor of $(-1)^{\operatorname{deg}(t)}$ in the definition of $\bbeta$ is designed to make the orientations in this equation agree.
  \end{proof}

%%%%%
\subsection{On the rational homology}
Although not necessary for the main results of this paper, the following proposition confirms that rationally all the homology is concentrated in degree $0$, and gives a crude estimate for the torsion.
\begin{prop} \label{nohom}
\item
\begin{enumerate}
\item $H_{k}(\mathbb L_{\bullet,n};\mathbb Z)$ is $n!$-torsion, and so $H_{k}(\mathbb L_{\bullet,n};\mathbb Q)=0$,  for all $k\geq 1$.
\item $H_{k}(\mathbb T_{\bullet,n+2};\mathbb Z)$ is $(n+2)!$-torsion, and so $H_{k}(\mathbb T_{\bullet,n+2};\mathbb Q)=0$, for all $k\geq 1$.
\end{enumerate}
\end{prop}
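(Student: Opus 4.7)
The plan is to reduce both parts to the same template: identify the distinct-label version of each tree complex with a classical object whose integral homology is concentrated in a single degree, and then descend to arbitrary signature by a group-homology argument that introduces the asserted factorial torsion bound.

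First I would decompose each complex by signature, writing $\mathbb{L}_{\bullet,n}=\bigoplus_{|\sigma|=n}\mathbb{L}_{\bullet,\sigma}$ and analogously for $\mathbb{T}_{\bullet,n+2}$. The key structural observation is that $\mathbb{L}_{\bullet,\sigma}$ is the module of coinvariants of the distinct-label complex $\mathbb{L}_{\bullet,(1,\ldots,1)}$ under the sign action of the Young subgroup $G_\sigma:=S_{n_1}\times\cdots\times S_{n_m}$ that permutes equi-labeled leaves, and $|G_\sigma|$ divides $n!$. With distinct labels every tree has trivial automorphism group (any automorphism must fix each leaf), so each $\mathbb{L}_{k,(1,\ldots,1)}$ is a free $\mathbb{Z}[S_n]$-module, hence a fortiori a free $\mathbb{Z}[G_\sigma]$-module.

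Second I would prove integral acyclicity in positive degrees for the distinct-label complex $\mathbb{L}_{\bullet,(1,\ldots,1)}$. This I would do by identifying it, up to a degree shift and sign twist, with the Whitney chain complex of the partition poset $\Pi_n$ of set-partitions of $\{1,\ldots,n\}$: a rooted tree is equivalent to a chain of refinements of $\{1,\ldots,n\}$ recorded at its internal vertices, and the tree boundary (sum of expansions) matches the standard Whitney differential on partition chains. Integral Cohen--Macaulayness of $\Pi_n$, due to Bj\"orner and Wachs, guarantees that its reduced order complex has homology concentrated in top dimension, which translates directly into $H_k(\mathbb{L}_{\bullet,(1,\ldots,1)};\mathbb{Z})=0$ for $k\geq 1$ and $H_0\cong \mathsf{Lie}(n)\otimes\mathrm{sgn}$.

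Combining the first two steps, $\mathbb{L}_{\bullet,(1,\ldots,1)}$ is a free $\mathbb{Z}[G_\sigma]$-resolution of its zeroth homology, so $H_k(\mathbb{L}_{\bullet,\sigma};\mathbb{Z})\cong H_k(G_\sigma;\mathsf{Lie}(n)\otimes\mathrm{sgn})$ for $k\geq 1$. Since positive-degree group homology is annihilated by the group order and $|G_\sigma|$ divides $n!$, this establishes part (i); the rational vanishing is immediate. Part (ii) follows by the entirely parallel route using $\mathbb{T}_{\bullet,(1,\ldots,1)}$, which plays the role of the Whitney chain complex of the analogous unrooted-partition poset on $\{1,\ldots,n+2\}$; the ambient group is now a Young subgroup of $S_{n+2}$ of order dividing $(n+2)!$.

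The main obstacle I anticipate is matching the tree boundary with the Whitney boundary on partition chains up to a globally consistent sign: the underlying bijection between trees and chains is classical and transparent, but reconciling our orientation convention (orderings of edges modulo even permutations) with the canonical orientation on partition chains is somewhat finicky. As a fallback, the rational halves of both statements admit a softer proof via Koszul duality of the $\mathsf{Com}$ and $\mathsf{Lie}$ operads, and the integral torsion bound can in principle be extracted from any explicit contracting chain homotopy for $\mathbb{L}_{\bullet,(1,\ldots,1)}\otimes\mathbb{Q}$ by clearing denominators.
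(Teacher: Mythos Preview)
Your overall strategy is the paper's: pass to the distinct-label complex $\mathbb L^\dagger_{\bullet,n}$, establish that its higher homology vanishes via a classical result, then transfer down. The paper packages the descent as $\mathbb L_{\bullet,n}=[V^{\otimes n}\otimes\mathbb L^\dagger_{\bullet,n}]_{\Sigma_n}$ and uses the bare transfer $[C_\bullet]_G\to C_\bullet\to[C_\bullet]_G$ (which composes to $|G|\cdot\id$) in place of your group-homology language; your per-signature refinement with Young subgroups is a legitimate variant and even yields the sharper bound $|G_\sigma|$.

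The one step that does not go through as written is your identification of $\mathbb L^\dagger_{\bullet,n}$ with chains in $\Pi_n$. The claim that a rooted tree records a chain of refinements at its internal vertices already fails for $((1,2),(3,4))$: the three internal vertices give the partitions $12|34$, $1|2|34$, $12|3|4$, and the last two are incomparable. A count confirms there is no bijection to repair by adjusting signs: for $n=4$ there are $15$ trivalent rooted trees but $18$ maximal chains in $\Pi_4\setminus\{\hat 0,\hat 1\}$, and the full face counts $(15,10,1)$ versus $(18,13)$ do not match either. The paper avoids this by using the Whitehouse complex $K_n$ instead, in which trees literally \emph{are} the simplices (a tree with $i+1$ internal edges is an $i$-simplex via nonnegative internal-edge lengths summing to $1$); this gives $\widetilde H^i(K_n)\cong H_{n-i-3}(\mathbb L^\dagger_{\bullet,n})$ on the nose, and the cited fact $K_n\simeq\bigvee^{(n-1)!}S^{n-3}$ delivers the vanishing with no sign-matching to do. For part~(ii) the paper simply notes that declaring one labeled leaf to be the root identifies the unrooted distinct-label complex on $n+2$ leaves with $\mathbb L^\dagger_{\bullet,n+1}$, and then transfers by $\Sigma_{n+2}$. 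So your anticipated obstacle is real, but it is the bijection itself rather than its signs; trade $\Pi_n$ for $K_n$ and the remainder of your outline is exactly the paper's proof.
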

\begin{proof}
Consider the tree complex $\mathbb L^\dagger_{\bullet,n}$ defined
analogously to $\mathbb L_{\bullet,n}$ except that the leaves are always
labeled by $1,\ldots, n$ without repeats. This actually corresponds
to an augmented cochain complex for a simplicial complex $K_n$
defined in the following way. Every tree, except the unique one with
only one internal vertex, corresponds to a  simplex, given by putting nonnegative
lengths summing to $1$ on all of its internal edges. (When an
edge has length $0$, it contracts to a point.) So we have the
isomorphism $\widetilde{H}^i(K_n)\cong H_{n-i-3}(\mathbb
L^\dagger_{\bullet,n})$, where the index shift comes from the fact that
the dimension, $i$, of a simplex is one less than the number of internal edges
of the corresponding tree, and the reader may verify that this corresponds to homological
degree $n-i-3$.
This simplicial complex
$K_n$, also known as the Whitehouse complex, is well known to
be homotopy equivalent to a wedge of $(n-1)!$ spheres of dimension
$n-3$ (which is the top dimension). See \cite{Re} for an elementary
proof. In particular, this implies that
$H_k(\mathbb L^\dagger_{\bullet,n})=0$ for all $k\geq 1$.  
Let $V=\mathbb Z^m$ be the abelian group spanned by the label set.
The symmetric group $\Sigma_n$ acts on
$\mathbb L^\dagger_{\bullet,n}$ by permuting the labels, and we have an
isomorphism with the space of coinvariants 
$$\mathbb
L_{\bullet,n}=[V^{\otimes n}\otimes \mathbb L^\dagger_{\bullet,n}]_{\Sigma_n}$$
where $\Sigma_n$ acts simultaneously on $L^\dagger_{\bullet,n}$ and
$V^{\otimes n}$. Now, if $k>0$, $H_i(V^{\otimes n}\otimes \mathbb
L^\dagger_{\bullet,n})\cong V^{\otimes n}\otimes H_i( \mathbb
L^\dagger_{\bullet,n})=0$. The proof is finished by noting that if a finite
group $G$ acts on a chain complex $C_\bullet$, where $H_i(C_\bullet)=0$, then
$H_i([C_\bullet]_G)$ is $|G|$-torsion. To see this note that we have a
sequence
$$ [C_\bullet]_G\to C_\bullet\to [C_\bullet]_G$$ where the first map is the map $\sigma\mapsto \sum_{g\in G} g\cdot\sigma$,
and the second map is the natural quotient. Their composition is $|G|\cdot\operatorname{Id}$.
Applying the functor $H_i(\cdot)$, we have $|G|\cdot\operatorname{Id}$ factoring through $0$,
implying that $H_i([C_\bullet]_G)$ is $|G|$-torsion.

The proof for $\mathbb T_{\bullet,n+2}$ is similar, but one needs to mod out
by the action of $\Sigma_{n+2}$, and thereby include the root,
instead.
 \end{proof}
%===============================================

\section{Discrete Morse Theory for Chain Complexes}\label{sec:discretemorse}
In order to prove Theorems~\ref{mainthm} and \ref{thm:vanish},
we adopt a convenient general framework for constructing quasi-isomorphisms based on Forman's theory of discrete vector fields on simplicial complexes \cite{For}. In the setting we require, this has been studied by Kozlov \cite{Kozlov}, who proves that the Morse complex, defined below, yields isomorphic homology, but does not construct a map to the Morse complex. We give an elaboration of his proof which has the added benefit of constructing a map to the Morse complex, but we claim no originality.

We start by considering, like Kozlov, chain groups which are free modules over a commutative ring. We then analyze a specific non-free case: when the chain groups are direct sums of copies of $\Z$ and $\z$. This analysis can be generalized to other non-free cases, but we limit ourselves to what we need in this paper.

\begin{defn}[Homological Vector Field]
Fix a commutative ground ring $R$, with unit, and suppose that $(C_\bullet,\partial)$ is a chain complex where each $C_k$ is a free $R$-module, with a fixed basis $\{\mathbf {b_k^i}\}$.
\begin{enumerate}
\item  A \emph{vector} is a pair of basis elements $(\mathbf {b^i_{k-1}},\mathbf {b_k})$ in degrees $k-1$ and $k$ respectively, such that 
 $\partial(\mathbf {b_{k}})=r_i\mathbf{ b_{k-1}^i}+\sum_{i\neq j} c_j\mathbf {b_{k-1}^j}$, where $r_i\in R$ is invertible, and the coefficients $c_j\in R$ are arbitrary.
  \item A \emph{(homological) vector field}, $\Delta$, is a collection of vectors $(\mathbf a,\mathbf b)$ such that every basis element appears in at most one vector of $\Delta$. 
  \item A basis element is said to be \emph{critical} if doesn't appear in any vector of the vector field $\Delta$.  The set of all critical basis elements for $\Delta$ will be denoted $\mathfrak X^\Delta$. 
 \item Given a vector field, $\Delta$,
a \emph{gradient path} is a sequence of basis elements $$\mathbf {a_1}, \mathbf {b_1}, \mathbf {a_2},\mathbf {b_2},\ldots, \mathbf{a_m}$$ where 
each $(\mathbf {a_i},\mathbf {b_i})\in\Delta$, and
 $\mathbf {a_i}$ has nonzero coefficient in $\partial \mathbf {b_{i-1}}$ and $\mathbf{a_i}\neq\mathbf {a_{i-1}}$. It is often useful to visualize gradient paths using a ``zigzag" diagram like the one below.
 $$
\xymatrixrowsep{1pc}
\xymatrixcolsep{1pc}
\xymatrix{
&\mathbf{b_1}\ar[dr]^\partial&&\mathbf{b_2}\ar[dr]^\partial&&\mathbf{b_{m-1}}\ar[dr]^\partial\\
\mathbf{a_1}\ar[ur]^\Delta&&\mathbf{a_2}\ar[ur]^\Delta&&\mathbf{a_3}\ar@{..}[ur]&&\mathbf{a_m}
}
$$
 The set of all gradient paths from $\mathbf a$ to $\mathbf a'$ (that is with $\mathbf {a_1}= \mathbf a$ and $\mathbf {a_m}= \mathbf a'$) is denoted $\Gamma(\mathbf a,\mathbf a')=\Gamma^\Delta(\mathbf a, \mathbf a')$.
 \item A $\partial$-gradient path is a sequence of basis elements $$\mathbf{b_0},\mathbf{a_1},\mathbf{b_1},\ldots,\mathbf{a_m}$$ where $\mathbf{a_1},\mathbf{b_1},\cdots,\mathbf{a_m}$ is a gradient path and $\mathbf{a_1}$ has nonzero coefficient in $\partial \mathbf{b_0}$. The appropriate zigzag here is
  $$
\xymatrixrowsep{1pc}
\xymatrixcolsep{1pc}
\xymatrix{
\mathbf{b_0}\ar[dr]^\partial&&\mathbf{b_1}\ar[dr]^\partial&&\mathbf{b_2}\ar[dr]^\partial&&\mathbf{b_{m-1}}\ar[dr]^\partial\\
&\mathbf{a_1}\ar[ur]^\Delta&&\mathbf{a_2}\ar[ur]^\Delta&&\mathbf{a_3}\ar@{..}[ur]&&\mathbf{a_m}
}
$$
 The set of all $\partial$-gradient paths from $\mathbf b$ to $\mathbf a$ will be denoted $\Gamma_\partial(\mathbf b,\mathbf a)=\Gamma_\partial^\Delta(\mathbf b,\mathbf a)$.
  \item A vector field is said to be a \emph{gradient field} if there are no closed gradient paths. 
 \end{enumerate}
 \end{defn}

Given a vector field, one can construct a degree $1$ homomorphism of the same name $\Delta\colon C_\bullet\to C_{\bullet+1}$ as follows. If $\mathbf a$ is a basis element appearing in a vector $(\mathbf a,\mathbf b)$, define $\Delta(\mathbf a)=\mathbf b$, and define $\Delta$ to be zero on all other basis elements. 

There are functionals on gradient paths and $\partial$-gradient paths
$$
w\colon \Gamma(\mathbf a,\mathbf a')\to R\text{ and }w\colon \Gamma_\partial(\mathbf b,\mathbf a)\to R
$$
called the \emph{weight}.
The weight of a gradient path $\gamma=(\mathbf {a_1}, \mathbf {b_1}, \mathbf {a_2},\mathbf {b_2},\ldots, \mathbf{a_m})$ is defined as follows. For each $\mathbf{a_i}$, where $i>1$, suppose that
 $\partial \mathbf{b_{i-1}}=r_{i-1}\mathbf{a_{i-1}}+c_{i-1} \mathbf {a_i}+\cdots,$ for $r_{i-1},c_{i-1}\in R$, with $r_{i-1}$ invertible.
  Define the weight to be 
  $$
  w_\gamma= (-1)^{m-1} \frac{c_1\cdots c_{m-1}}{r_1\cdots r_{m-1}}\in R.
  $$ 
  The weight of a $\partial$-gradient path $\mu$ is also multiplied by the coefficient $c_0$ of $\mathbf{a_1}$ in $\partial \mathbf{b_0}$: 
$$
w_\mu=(-1)^{m-1}\frac{c_0c_1\cdots c_{m-1}}{r_1\cdots r_{m-1}}\in R.
$$

We now define the Morse complex $C_\bullet^{\Delta}$ for a gradient vector field $\Delta$. The chain groups of $C_\bullet^{\Delta}$ are the submodules of $C_\bullet$ spanned by critical basis elements.  The boundary operator $\partial^{\Delta}$ is defined as follows. Suppose $\mathbf b\in\mathfrak X^\Delta$.
$$\partial^{\Delta}(\mathbf b)=\sum_{\mathbf a\in\mathfrak X^\Delta} d_{ba} \mathbf a$$
where 
$$d_{ba}=\sum_{\gamma\in \Gamma_{\partial}(\mathbf b,\mathbf a)} w_\gamma.$$

There is a map $\phi=\phi^\Delta\colon(C_\bullet,\partial)\to(C_\bullet^{\Delta},\partial^{\Delta})$ defined as follows. If $\mathbf a$ is critical, then $\phi(\mathbf a)=\mathbf a$. Otherwise 
$$
\phi (\mathbf a)=\sum_{\mathbf a'\in\mathfrak X^\Delta} c_{aa'}\mathbf a',
$$ 
where 
$$
c_{aa'}=\sum_{\gamma\in\Gamma(\mathbf a,\mathbf a')} w_\gamma.
$$
In particular, it follows from these definitions that for any $\mathbf a$, $\phi(\Delta(\mathbf a))=0$.

 The map $\phi$ is in some sense defined to be the flow along a vector field. Clearly $\partial^{\Delta}=\phi^\Delta\partial$.

\begin{thm}\label{thm:morse-complex}
The Morse complex is a chain complex: $(\partial^{\Delta})^2=0$, and $\phi^\Delta$ is a chain map which induces an isomorphism $H_*(C_\bullet)\overset{\cong}{\to} H_*(C_\bullet^{\Delta})$. 
\end{thm}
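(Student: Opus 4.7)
The plan is to verify the three parts of the theorem in order: well-definedness of $\phi^\Delta$ and $\partial^\Delta$, the chain-map identity $\partial^\Delta\phi^\Delta=\phi^\Delta\partial$ (whence $(\partial^\Delta)^2=0$ follows formally), and the quasi-isomorphism property. Well-definedness amounts to finiteness of the sums over gradient paths: in the presence of local finiteness of $C_\bullet$ (each generator has only finitely many boundary neighbors and appears in at most one vector), the no-closed-gradient-paths hypothesis implies that every gradient path has bounded length and that only finitely many paths begin at any fixed generator. Concretely one produces a strict well-ordering on generators that decreases along gradient paths; in our applications this will come from a natural grading.

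For the chain-map property I would expand both sides of $\partial^\Delta\phi^\Delta=\phi^\Delta\partial$ as signed sums over paths and match them via a weight-preserving bijection. The cleanest case is a critical basis element $\mathbf b$: every $\partial$-gradient path from $\mathbf b$ to a critical $\mathbf a$ factors uniquely as a single boundary arrow $\mathbf b\to\mathbf a_1$ (contributing the factor $c_0$) followed by a gradient path from $\mathbf a_1$ to $\mathbf a$, which is exactly what $\phi^\Delta\partial(\mathbf b)$ assembles. If $\mathbf b$ is a non-critical target, both sides vanish, since $\phi^\Delta(\mathbf b)=0$ vacuously (no gradient path can begin at a target, as $\mathbf b$ occurs in $\Delta$ only in its second slot). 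If $\mathbf b$ is a non-critical source, the identity follows by the same bijection, with the term $r\mathbf b$ appearing in $\partial\Delta(\mathbf b)$ providing the cancellation needed to close up the telescoping. Granted this identity, $(\partial^\Delta)^2=\phi^\Delta\partial^2\iota=0$, so $(C^\Delta_\bullet,\partial^\Delta)$ is indeed a chain complex.

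For the quasi-isomorphism I would exhibit an explicit chain homotopy $H\colon C_\bullet\to C_{\bullet+1}$ with $\partial H+H\partial=\mathrm{id}-\iota\phi^\Delta$, where $\iota\colon C^\Delta_\bullet\hookrightarrow C_\bullet$ is the inclusion of critical generators. Since $\phi^\Delta\iota=\mathrm{id}$ is immediate from the definition of $\phi^\Delta$ on critical elements, such an $H$ exhibits $\iota$ and $\phi^\Delta$ as mutual chain-homotopy inverses and gives the desired isomorphism on homology. I would define $H(\mathbf a)$ by summing, over gradient paths $\gamma=(\mathbf a,\mathbf b_1,\mathbf a_2,\ldots,\mathbf a_m)$ ending at a non-critical $\mathbf a_m$, the contribution $w'_\gamma\,\Delta(\mathbf a_m)$ for an appropriately normalized version $w'_\gamma$ of $w_\gamma$ carrying an extra factor $1/r_m$. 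Applying $\partial$ to $\Delta(\mathbf a_m)$ yields $r_m\mathbf a_m$, which cancels against the matching contribution from $H\partial$, plus further terms that either extend the path by one step (and cancel with the adjacent summand) or land on a critical element (and collectively reproduce $\iota\phi^\Delta(\mathbf a)$); the length-one path $\gamma=(\mathbf a)$ supplies the remaining $\mathrm{id}$ term. The main obstacle will be the delicate bookkeeping of signs and of the invertible scalars $r_i$ needed to make this telescoping cancel cleanly; once the weights are set up correctly, the verification of the homotopy identity (and hence of the whole theorem) is formal.
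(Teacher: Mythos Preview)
Your chain-map argument and the deduction of $(\partial^\Delta)^2=0$ are fine, but the quasi-isomorphism step has a genuine gap: the naive inclusion $\iota\colon C^\Delta_\bullet\hookrightarrow C_\bullet$ is \emph{not} a chain map, so no homotopy $H$ can satisfy $\partial H+H\partial=\mathrm{id}-\iota\phi^\Delta$. Indeed, $\partial H+H\partial$ always commutes with $\partial$, so if the identity held then $\mathrm{id}-\iota\phi^\Delta$ would too; but it does not. Concretely, take $C_1=R\langle\mathbf{b},\mathbf{e}\rangle$, $C_0=R\langle\mathbf{a},\mathbf{c}\rangle$ with $\partial\mathbf{b}=\partial\mathbf{e}=\mathbf{a}+\mathbf{c}$ and the single vector $(\mathbf{a},\mathbf{b})$. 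Then $\mathbf{e}$ is critical, so $(\mathrm{id}-\iota\phi^\Delta)(\mathbf{e})=0$; yet your recipe gives $H(\mathbf{e})=0$ (no gradient path begins at the critical $\mathbf{e}$) and $H(\partial\mathbf{e})=H(\mathbf{a})+H(\mathbf{c})=\mathbf{b}$, hence $(\partial H+H\partial)(\mathbf{e})=\mathbf{b}\neq 0$. Since $\iota$ is not a chain map, it does not even induce a map on homology, so the ``mutual chain-homotopy inverses'' picture collapses. The standard repair (as in Sk\"oldberg's algebraic Morse theory) replaces $\iota$ by a map $\psi\colon C^\Delta_\bullet\to C_\bullet$ sending each critical generator to a weighted sum over \emph{reversed} gradient paths ending at it; one then obtains a genuine deformation retraction $(\phi^\Delta,\psi,H)$, but defining $\psi$ and verifying the identities is work your proposal does not do.

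The paper sidesteps this entirely by inducting on the number of vectors in $\Delta$. For a single vector $(\mathbf{a},\mathbf{b})$ one quotients $C_\bullet$ by the visibly acyclic subcomplex $R\langle\mathbf{b}\rangle\to R\langle\partial\mathbf{b}\rangle$ and checks directly that the quotient is identified with $C_\bullet^\Delta$ via $\phi^\Delta$. For the inductive step, one removes a single vector, applies the induction hypothesis to the remaining $k$ vectors, and observes (using the no-closed-paths hypothesis) that the removed vector is still a gradient vector on the intermediate Morse complex; a short computation then shows that composing the two flows recovers $\phi^\Delta$ and $\partial^\Delta$. This is less structural than the deformation-retraction approach but requires no construction of $\psi$ and no delicate sign bookkeeping.
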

\begin{proof}
We prove all statements simultaneously by induction on the number of vectors in the vector field. The base case of our analysis will be one vector, say $\Delta(\mathbf a)=\mathbf b$, and $\partial \mathbf b=r\mathbf a+\cdots$. Consider the acyclic subcomplex $A=(0\to R(\mathbf b)\to R(\partial \mathbf b)\to 0)$. We get a quasi-isomorphism $C_\bullet\to C_\bullet/A$. Now as a free $R$-module, we have an isomorphism $C_\bullet/A\overset{\cong}{\to}C_\bullet^{\Delta}$ under the map defined on generators $[\mathbf b]\mapsto 0, [\mathbf a]\mapsto \mathbf a-\frac{1}{r}\partial \mathbf b$, and $[\mathbf c]\mapsto \mathbf c$ otherwise. The gradient paths in $C_\bullet$ are all of the form $\mathbf a,\mathbf b,\mathbf c$ where $\mathbf c$ is a nonzero term in $\partial \mathbf b$ not equal to $\mathbf a$. The weight of this path is the negative of the coefficient of $\mathbf c$ in $\partial \mathbf b$, divided by $r$. Hence $\phi(\mathbf a)=-\frac{1}{r}\partial \mathbf b+\mathbf a$. On the other hand $\phi(\mathbf b)=0$ and $\phi(\mathbf c)=\mathbf c$ for critical generators. So the isomorphism is given by $\phi$ as claimed.

The boundary $\partial$ on $C_\bullet/A$ induces the boundary operator $\phi\partial\phi^{-1}$ on $C_\bullet^{\Delta}$, and we must now determine its form.  Consider a critical generator $\mathbf c$. $\phi^{-1}(\mathbf c)=[\mathbf c]$. So $\phi\partial\phi^{-1}(\mathbf c)=\phi\partial \mathbf c=\partial^{\Delta} \mathbf c$ as desired.

Now suppose the theorem is true for gradient vector fields with $k$ vectors, and assume we now have one with $k+1$ vectors. Choose a vector, $\mathbf b=\Delta \mathbf a$.

Let $\Delta_k$ be the vector field consisting of the $k$ vectors aside from $(\mathbf a,\mathbf b)$. Inductively we have a quasi-isomorphism
$$
\phi^k:=\phi^{\Delta_k}\colon (C_\bullet,\partial)\to (C_\bullet^{\Delta_k},\partial^{\Delta_k})
$$
Now, we claim the pair $(\mathbf a,\mathbf b)$ still represents a vector in this Morse complex. We need only verify that $\partial^{\Delta_k}\mathbf b=r\mathbf a+\cdots.$ This follows from the nonexistence of closed gradient paths in the original vector field,
since  a $\partial$-gradient path from $\mathbf b$ to $\mathbf a$ aside from the path $(\mathbf b,\mathbf a)$ would combine with the vector $(\mathbf a,\mathbf b)$ to form a closed gradient path. Let $\Delta_1$ be the vector field with the single vector $(\mathbf a,\mathbf b)$ on the Morse complex $C_\bullet^{\Delta_k}$. Then $(C_\bullet^{\Delta_k})^{\Delta_1}=C_\bullet^\Delta$. 
 So we also have a quasi-isomorphism $$\phi^1=\phi^{\Delta_1}\colon(C_\bullet^{\Delta_k},\partial^{\Delta_k})\to(C_\bullet^{\Delta},\partial^{\Delta_1})$$
where we emphasize that $\partial^{\Delta_1}$ is defined to be the weighted sum of $\partial$-gradient paths alternating between $\partial^{\Delta_k}$ and $\Delta_1$. We now need to check that $\phi^\Delta=\phi^{1}\phi^{k}$, and that $\partial^{\Delta_1}=\partial^{\Delta}$. In the following calculations let $\mathfrak X^i=\mathfrak X^{\Delta_i}$, and let $\mathfrak X=\mathfrak X^\Delta$.
\begin{align*}
\phi^{1}\phi^{k}(\mathbf u)&=\phi_{1}\sum_{\mathbf v\in\mathfrak X^{k}}c^k_{uv}\mathbf v \\
&=\sum_{\mathbf v\in\mathfrak X^{k}}\sum_{\mathbf w\in\mathfrak X}c^k_{uv}c^1_{vw}\mathbf w 
\end{align*}
where $c^i_{uv}$ measures gradient paths with respect to $\Delta_i$.
So we need to check that  
$$
\forall \mathbf w\in\mathfrak X,\quad\quad\sum_{\mathbf v\in\mathfrak X^{k}}c^k_{uv}c^1_{vw}=c_{uw}.
$$
Note that $\mathfrak X=\mathfrak X^k\setminus\{\mathbf a,\mathbf b\}$, so we can write
\begin{align*}
\sum_{\mathbf v\in\mathfrak X^k}c^k_{uv}c^1_{vw}&=
\left(\sum_{\mathbf v\in\mathfrak X}c^k_{uv}c^1_{vw} \right) +
c^k_{ub}c^1_{bw}+c^k_{ua}c^1_{aw}\\
&=c^k_{uw}+c^k_{ua}c^1_{aw}
\end{align*}
The first term simplifies as indicated, because when $\mathbf{v},\mathbf{w}\in\mathfrak X$, $c^1_{vw}$ is only nonzero if $\mathbf v=\mathbf w$. The term $c^1_{bw}$ is zero because $\mathbf b$ being $\Delta_1(\mathbf a)$ does not begin any gradient paths.
The term $c^k_{uw}$ measures gradient paths from $\mathbf u$ to $\mathbf w$ which do not involve the vector $(\mathbf a,\mathbf b)$.
$c^k_{ua}$ measures all gradient paths that end in $\mathbf a$, and $c^1_{aw}$ represents all gradient paths (alternating between $\Delta_1$ and $\partial^{\Delta_k}$) from $\mathbf a$ to $\mathbf w$. There is only one vector in 
$\Delta_1$, and $\partial^{\Delta_k}$ measures alternating paths between $\partial$ and $\Delta_k$, so $c^1_{aw}$ represents all gradient paths with respect to $\Delta$ that start with the basis element $\mathbf a$ and end at $\mathbf w.$ So the product $c^k_{ua}c^1_{aw}$ measures all gradient paths that pass through the basis element $\mathbf a$.
Thus the sum measures all gradient paths from $\mathbf u$ to $\mathbf w$: $c^k_{uw}+c^k_{ua}c^1_{aw}=c_{uw}$.

Note that by induction $\partial^{\Delta_k}=\phi^k\partial$ and $\partial^{\Delta_1}=\phi^1 \partial^{\Delta_k}$. Hence $\partial^{\Delta_1}=\phi^1\phi^k \partial=\phi\partial=\partial^{\Delta}$.
\end{proof}

\subsection{Non-free chain complexes}
We will need to adapt the above construction to chain complexes which are not free. Indeed our tree complexes all have $2$-torsion, so we adapt the notion of a gradient vector field to the case where the chain groups consist of both $\Z$- and $\z$-summands. We replace the notion of ``basis of a free $R$-module" with the notion of ``minimal generating set," where some generators span copies of $\Z$ and some span copies of $\z$.
 We define a vector field as above except that a vector $(\mathbf a,\mathbf b)$ cannot
mix a $\z$-generator and a $\Z$-generator. 
  Then $C^\Delta_\bullet$ is defined to be the subgroup of the chain group $C_\bullet$ spanned by critical generators. (In particular, $\z$-generators remain $2$-torsion in the Morse complex.)
 Gradient paths are defined as above, and come in two types. A gradient path ending in $\mathbf{a}$ is called a $\z$-path if $\mathbf a$ is a $\z$-generator and is called a $\Z$-path if $\mathbf a$ is a $\Z$-generator. Notice that if a gradient path involves a $\z$ generator at some stage, then every subsequent generator in the path will be a $\z$-generator. In particular, a $\Z$-path will only consist of $\Z$-generators.
   The weight of a gradient path is defined as follows. If  is defined by the same formula as in the free case for $\Z$-paths: $w_\gamma= (-1)^{m-1} \frac{c_1\cdots c_{m-1}}{r_1\cdots r_{m-1}}\in \Z$. For $\z$-paths, $w_\gamma= c_1\cdots c_{m-1}\in \z$, where some of these coefficients may be in $\Z$, but are interpreted mod $2$. The $r_i$'s are omitted as they are all $\pm 1$, so are trivial mod $2$. The weights of $\partial$-gradient paths are defined similarly.
   
The flow $\phi^\Delta$ is defined as a sum of weights of gradient paths, as in the free case. With these definitions, the proof of Theorem~\ref{thm:morse-complex} then goes through with little modification in this particular non-free setting:

\begin{thm}\label{thm:morse-nonfree}
Let $C_\bullet$ be a chain complex where each $C_k$ is a finitely generated abelian group whose only torsion is $2$-torsion, and suppose that $\Delta$ is a gradient vector field in the above sense on $C_\bullet$. Then the Morse complex is a chain complex: $(\partial^{\Delta})^2=0$, and $\phi^\Delta$ is a chain map which induces an isomorphism $H_*(C_\bullet)\overset{\cong}{\to} H_*(C_\bullet^{\Delta})$.
\end{thm}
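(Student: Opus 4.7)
The plan is to adapt the inductive argument of Theorem~\ref{thm:morse-complex}, with the atomic acyclic subcomplexes chosen so as to respect the $\Z/\z$ decomposition of each chain group. Throughout, ``invertible'' is interpreted to mean $\pm 1$ when a vector lies in the $\Z$-summands and $1$ when it lies in the $\z$-summands; the non-mixing hypothesis on vectors makes this unambiguous.

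For the base case one takes $\Delta = \{(\mathbf{a},\mathbf{b})\}$ with $\partial \mathbf{b} = r\mathbf{a} + \sum_j c_j \mathbf{a}^j$. If $\mathbf{a},\mathbf{b}$ are $\Z$-generators then $r = \pm 1$, and the subcomplex $A$ spanned by $\mathbf{b}$ in one degree and $\partial\mathbf{b}$ in the next is $\Z \xrightarrow{\pm 1} \Z$, hence acyclic. If instead they are $\z$-generators, then $2\mathbf{b}=0$ forces $2\partial\mathbf{b}=0$, so no $\Z$-generator can occur in $\partial\mathbf{b}$; hence $A = (\z \xrightarrow{1} \z)$, again acyclic. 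In both cases $C_\bullet \to C_\bullet/A$ is a quasi-isomorphism onto a chain complex whose torsion remains purely $2$-torsion, and $C_\bullet/A \cong C_\bullet^\Delta$ via $[\mathbf{c}]\mapsto \mathbf{c}$ for critical $\mathbf{c}$, $[\mathbf{b}]\mapsto 0$, and $[\mathbf{a}]\mapsto \mathbf{a} - r^{-1}\partial \mathbf{b}$ (with $r^{-1}$ taken in the appropriate ring). One then checks that $\phi^\Delta$ realizes this isomorphism by computing the weight of the unique length-two gradient paths $\mathbf{a},\mathbf{b},\mathbf{a}^j$: the $\Z$-weight formula gives $-c_j/r$, and the $\z$-formula, which omits $r$ and the overall sign, gives $c_j$, matching $-r^{-1}c_j$ mod 2. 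The chain-map identity $\partial^\Delta = \phi^\Delta \partial$ and the identification of the induced differential on the quotient then proceed exactly as in the free case.

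For the inductive step, assume the theorem for vector fields with $k$ vectors, and let $\Delta$ have $k+1$ vectors. Choose any $(\mathbf{a},\mathbf{b})\in \Delta$, set $\Delta_k = \Delta \setminus \{(\mathbf{a},\mathbf{b})\}$ and $\Delta_1 = \{(\mathbf{a},\mathbf{b})\}$. Induction gives a quasi-isomorphism $\phi^{\Delta_k}\colon C_\bullet \to C_\bullet^{\Delta_k}$, and the target still has only $2$-torsion since the Morse construction merely deletes pairs of generators of matching type. To reapply the base case, I must confirm that $(\mathbf{a},\mathbf{b})$ is still a vector in $C_\bullet^{\Delta_k}$: the coefficient of $\mathbf{a}$ in $\partial^{\Delta_k}\mathbf{b}$ is a sum of weights of $\partial$-gradient paths from $\mathbf{b}$ to $\mathbf{a}$ in $\Delta_k$, and any such path other than the trivial length-one one would combine with $(\mathbf{a},\mathbf{b})$ to form a closed gradient path for $\Delta$, contradicting the gradient hypothesis. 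The base case applied to $\Delta_1$ on $C_\bullet^{\Delta_k}$ then produces a quasi-isomorphism $\phi^{\Delta_1}$ with $(C_\bullet^{\Delta_k})^{\Delta_1} = C_\bullet^\Delta$; the composition $\phi^{\Delta_1}\circ \phi^{\Delta_k}$ is the desired map.

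The remaining identities $\phi^{\Delta} = \phi^{\Delta_1}\circ\phi^{\Delta_k}$ and $\partial^{\Delta} = \partial^{\Delta_1}$ will reduce, as in the free case, to the combinatorial decomposition of every gradient path from $\mathbf{u}$ to a critical $\mathbf{w}$ according to whether it touches $\mathbf{a}$, yielding $c_{uw} = c^k_{uw} + c^k_{ua}c^1_{aw}$. The hardest point will be the weight bookkeeping across the $\Z/\z$ interface, since the two weight formulas differ by the sign $(-1)^{m-1}$ and by the reciprocals $r_i^{-1}$. This is handled by the observation noted just before the theorem: once a gradient path enters a $\z$-generator it stays among $\z$-generators, so every path splits uniquely into a $\Z$-prefix followed by a $\z$-suffix, weights multiply across concatenation within each regime, and the $\pm 1$ values of the $r_i$ in the $\Z$-regime reduce to $1$ when read mod 2 at the interface. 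The combinatorial identity therefore holds separately in each regime, and the proof of Theorem~\ref{thm:morse-complex} transports verbatim.
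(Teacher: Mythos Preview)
Your proposal is correct and follows the same approach the paper intends: the paper's own proof of Theorem~\ref{thm:morse-nonfree} is simply the assertion that ``the proof of Theorem~\ref{thm:morse-complex} then goes through with little modification in this particular non-free setting,'' and you have supplied exactly those modifications (the $\Z$/$\z$ case split in the base step, the observation that $2\mathbf b=0$ forces $\partial\mathbf b$ to lie entirely in the $\z$-summands, and the $\Z$-prefix/$\z$-suffix structure of gradient paths that reconciles the two weight formulas under concatenation). There is nothing to compare; you have written out what the paper leaves implicit.
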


%=======================================================

\section{Proof of Theorem~\ref{thm:vanish}}\label{sec:vanish-thm-proof}
Recall the statement of Theorem~\ref{thm:vanish}: $H_1(\mathbb L_{\bullet,n};\mathbb Z)=0$.
The proof is done in two stages by Propositions~\ref{prop:oddprime} and \ref{prop:2tor}, whose proofs occupy most of this section. 

Let $\mathbb L^{(2)}_{\bullet,n}\subset \mathbb L_{\bullet,n}$ be the subcomplex spanned by $2 \mathbb L_{\bullet,n}$ and symmetric degree $2$ trees as on the right side of Figure~\ref{hall2} which have no orientation-reversing symmetry. (The obvious symmetry turns out to be orientation preserving, because the trees being swapped have an even number of edges. See the discussion in the proof of Proposition~\ref{prop:oddprime}.) This is evidently a subcomplex since the boundary of such a tree is a multiple of $2$.

\begin{prop}\label{prop:oddprime}
For all $n$, $H_1(\mathbb L_{\bullet,n}; \zh)=0,$ and $H_1(\mathbb L^{(2)}_{\bullet,n};\Z)=0$.
\end{prop}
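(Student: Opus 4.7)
The plan is to prove both vanishing statements by exhibiting explicit discrete Morse gradient vector fields whose Morse complexes are zero in homological degree $1$. Theorem~\ref{thm:morse-complex} will then give $H_1 = 0$ for $\zh\otimes\mathbb L_{\bullet,n}$ (a complex of free $\zh$-modules), and Theorem~\ref{thm:morse-nonfree} will do the same for $\mathbb L^{(2)}_{\bullet,n}$, whose generating set includes $2$-torsion coming from the admitted symmetric degree-$2$ trees.

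For the $\zh$-statement, I will construct the vector field in the spirit of the Hall basis algorithm for the free Lie algebra. Fix once and for all a Hall order on rooted oriented trees so that the degree-zero Hall trees descend to a $\zh$-basis of ${\sf L}_n$. In degree $0$, every non-Hall rooted tree $T = (J_1, J_2)$ is Jacobi-equivalent to a $\zh$-combination of Hall-smaller trees, and this Jacobi relation is, up to sign, the boundary $\partial T^{\sharp}$ of the unique degree-$1$ tree $T^{\sharp}$ obtained by collapsing the root edge of $T$ into a single $4$-valent vertex. The coefficient of $T$ in $\partial T^{\sharp}$ is $\pm 1$, hence invertible in $\zh$, so the pair $(T, T^{\sharp})$ qualifies as a vector in the sense of Section~\ref{sec:discretemorse}. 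For higher degrees I extend the pairing by locating in each tree a canonical expansion site relative to the Hall order -- roughly, the root-most position where a non-Hall sub-bracket appears -- and pairing the tree with the one obtained by expanding or contracting at that site. A generator is critical precisely when this procedure cannot be applied, and direct inspection will show that no critical generators appear in homological degree $1$.

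The central obstacle is to verify that this $\Delta$ is a genuine gradient field, i.e.\ that there are no closed gradient paths. For gradient paths whose $\mathbf a$-generators lie in degree zero, the termination of the Hall basis algorithm translates directly into a strict decrease in a Hall-based ordering along the path, precluding closed loops; this is the formal reason for modelling $\Delta$ on a Hall algorithm. For gradient paths at higher homological degree, the verification requires a delicate case analysis: I attach to each tree in a gradient path a lexicographic complexity function (encoding the location and combinatorial type of the canonical site chosen by $\Delta$), and then check case by case that the composition of $\Delta$ and $\partial$ strictly decreases this complexity, so that any closed gradient path must be trivial. This exhaustive bookkeeping is the main technical difficulty of the proof.

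For the second statement, about $\mathbb L^{(2)}_{\bullet,n}$ with integer coefficients, the construction is parallel but uses Levine's generalization of the Hall basis algorithm to the free quasi-Lie algebra, in which the nontrivial symmetric brackets $[Z,Z]$ are admitted as auxiliary Hall elements. The vector field is built by the same site-selection procedure relative to this quasi-Lie Hall order, so that the doubled generators $2T$ are paired exactly as in the $\zh$-case while the additional symmetric generators remain critical only in homological degrees distinct from $1$. The Morse complex is again zero in degree $1$, and Theorem~\ref{thm:morse-nonfree} then yields $H_1(\mathbb L^{(2)}_{\bullet,n};\Z) = 0$, completing the proof of the proposition.
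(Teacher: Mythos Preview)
Your overall strategy---discrete Morse theory with a vector field modelled on the Hall basis algorithm---is exactly the paper's approach, and your description of $\Delta_0$ and the termination argument via a Hall-type order is on target. But your treatment of the second statement, on $\mathbb L^{(2)}_{\bullet,n}$, is based on a misreading of that complex, and this is where the proposal goes wrong.

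You assert that $\mathbb L^{(2)}_{\bullet,n}$ has $2$-torsion generators and therefore requires Theorem~\ref{thm:morse-nonfree} and Levine's quasi-Lie Hall basis. Neither is correct. By definition $\mathbb L^{(2)}_{\bullet,n}$ is spanned by $2\mathbb L_{\bullet,n}$ together with precisely those symmetric degree-$2$ trees that have \emph{no} orientation-reversing automorphism; doubling kills all the $\z$-summands of $\mathbb L_{\bullet,n}$, and the adjoined symmetric trees are $\Z$-generators, so $\mathbb L^{(2)}_{\bullet,n}$ is a free $\Z$-complex. In particular the degree-$0$ trees $(H,H)$ are all zero in $\mathbb L^{(2)}_{\bullet,n}$, so the quasi-Lie Hall elements you propose to admit are simply not present---that algorithm is the tool for the $\z$-coefficient Proposition~\ref{prop:2tor}, not for this one. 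The paper's key observation is that $\zh\otimes\mathbb L_{\bullet,n}$ and $\mathbb L^{(2)}_{\bullet,n}$ are \emph{isomorphic} chain complexes once bases are chosen appropriately (the symmetric degree-$2$ tree corresponds to $\tfrac12$ of itself on one side and to itself undoubled on the other), so a single ordinary-Hall vector field handles both statements simultaneously. Your plan to build two separate vector fields is not wrong in spirit, but the second one is aimed at the wrong target; once you recognise the isomorphism, the $\mathbb L^{(2)}$ statement comes for free.

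A smaller point: your description of $T^{\sharp}$ as ``collapsing the root edge of $T$'' is not what is wanted. The contraction takes place at a \emph{Hall problem}, a full subtree $((H',H''),H_2)$ somewhere inside $T$ where two Hall trees bracket to a non-Hall tree, and one collapses the base edge of $(H',H'')$ to produce a $4$-valent vertex $(H',H'',H_2)$ there---not at the root. Your $\Delta_1$ will then need to handle four distinct failure modes for a degree-$1$ tree to lie in the image of $\Delta_0$ (including the case where the canonical expansion is zero by symmetry, which is where the adjoined degree-$2$ trees enter); the complexity function you allude to is, in the paper, simply the Hall order itself, which increases strictly along every gradient path.
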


\begin{prop}\label{prop:2tor}
For all $n$, $H_1(\mathbb L_{\bullet,n};\mathbb Z_2)=0$.
\end{prop}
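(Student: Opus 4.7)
The plan is to apply discrete Morse theory to construct a gradient vector field $\Delta$ on the free $\z$-module chain complex $\z\otimes\mathbb L_{\bullet,n}$ whose Morse complex has no critical generators in homological degree $1$; Theorem~\ref{thm:morse-complex} will then yield $H_1(\mathbb L_{\bullet,n};\z)=0$. This is the $\z$-coefficient analogue of the $\zh$-case of Proposition~\ref{prop:oddprime}, and, as indicated in the introduction, $\Delta$ should be modeled on Levine's generalization of the Hall basis algorithm to the free \emph{quasi}-Lie algebra. The essential new feature of the quasi-Lie Hall basis is that, in addition to the usual non-symmetric Hall brackets $(J_1,J_2)$ with $J_1<J_2$, one also keeps the symmetric self-brackets $(J_0,J_0)$, since $[Y,Y]$ does not vanish in $\sL'$.

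I would first fix a Hall-type total order on oriented rooted trees adapted to the quasi-Lie setting and, in degree $0$, pair each non-Hall trivalent tree $J=(J_1,J_2)$ with a canonical $4$-valent expansion $T$ whose boundary contains $J$ with mod-$2$ coefficient $1$, chosen so that iterating this pairing reproduces the standard Hall rewriting. The higher-degree vectors extend the lowest-degree pattern by selecting a preferred internal edge in each non-critical tree (using Hall-order data from its incident subtrees and their symmetry type) and pairing trees in degrees $k$ and $k+1$ via contraction/expansion of that edge. By construction, the degree-$0$ critical generators are exactly the quasi-Lie Hall basis elements of $\z\otimes\sL'_n$, and no tree in degree $1$ is left critical, so the Morse complex vanishes in degree $1$ as required.

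The main obstacle, as the introduction flags, is verifying that $\Delta$ is a gradient field, i.e.\ that no closed gradient paths exist. For the lowest-degree pairs, termination of Levine's quasi-Lie Hall algorithm provides a well-founded complexity on degree-$0$ trees that strictly decreases along the composition $\partial\circ\Delta$, ruling out loops among them. For the higher-degree vectors an exhaustive case analysis is needed: one must check that whenever a $\Delta$-expansion at a preferred edge is followed by a $\partial$-expansion producing another $\Delta$-paired tree, a Hall-theoretic invariant (built from the preferred edge and the Hall-orderings of its two sides) strictly decreases, so that no cycle can close up. The new subtlety over $\z$, beyond what Proposition~\ref{prop:oddprime} handled, arises at vertices incident to a symmetric pair of subtrees $(J_0,J_0)$: in characteristic $2$, the boundary contributions from the two would-be symmetric expansions coincide rather than cancelling, and the vector field and the case analysis must be organized so that this mod-$2$ phenomenon does not produce an uncaught gradient loop.
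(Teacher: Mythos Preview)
Your outline is correct and matches the paper's approach exactly: one builds a gradient vector field $\Delta'_0\cup\Delta'_1$ on $\z\otimes\mathbb L_{\bullet}$ modeled on Levine's quasi-Lie Hall basis (so the degree-$0$ critical generators are the Hall trees together with the symmetric brackets $(H,H)$), leaving no critical generators in degree~$1$. The one spot where your sketch is slightly optimistic is the gradient-loop argument: in the paper's execution the Hall-order invariant fails to strictly increase precisely in the symmetric exceptional case where a full subtree $((H,H),B,C)$ contracts to $(H,H,B,C)$, and there a separate descendant-tracking argument (showing every descendant either lies in the image of $\Delta'_0$ or still contains a full subtree of the form $(H,H,A)$) is used instead of a uniform monotone invariant.
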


Before proving these propositions, we check that they imply Theorem~\ref{thm:vanish}.
We would like to use the universal coefficient theorem to conclude that $H_1(\mathbb L_{\bullet,n};\Z)=0$, but our chain groups are not free modules, so we pause to state a lemma that holds in this context. 
 For any chain complex $C_\bullet$ define 
$$H_k^{(2)}(C_\bullet)= \frac{Z_k\cap 2C_k}{2Z_k\cup (B_k\cap 2C_k)},$$
where $Z_k$ and $B_k$ are the submodules of cycles and boundaries, respectively.

\begin{lem}\label{lem:uct}
For all $k$, there is an exact sequence 
$$
0\to H_k^{(2)}(C_\bullet)\to \z\otimes H_k(C_\bullet)\to H_k(C_\bullet;\z).
$$ 
\end{lem}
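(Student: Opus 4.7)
The plan is to build the sequence from the natural coefficient-change map and then compute its kernel directly.

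First I would write down the two maps explicitly. The right-hand map is the standard reduction $\z \otimes H_k(C_\bullet) \to H_k(C_\bullet;\z)$ sending the class of a cycle $z \in Z_k$ to the class of its reduction $\bar z \in C_k/2C_k$; this is well defined because $2[z] = [2z]$ reduces to $0$ in $C_k/2C_k$. The left-hand map is: for $z \in Z_k \cap 2C_k$, send its class in $H_k^{(2)}(C_\bullet)$ to $1 \otimes [z] \in \z \otimes H_k(C_\bullet)$. For well-definedness, if $z = 2z' + b$ with $z' \in Z_k$ and $b \in B_k \cap 2C_k$, then $[z] = 2[z']$ in $H_k(C_\bullet)$, hence $1 \otimes [z] = 0$. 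The composition of the two maps is zero because $z \in 2C_k$ reduces to $\bar z = 0$.

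The main step is identifying the kernel of the right-hand map with $H_k^{(2)}(C_\bullet)$. A cycle $z \in Z_k$ reduces to a boundary in $C_\bullet/2C_\bullet$ iff $z \in B_k + 2C_k$; and writing $z = b + 2c$ with $z \in Z_k$ forces $2c = z - b \in Z_k$, so $(B_k + 2C_k) \cap Z_k = B_k + (Z_k \cap 2C_k)$. Quotienting further by $2H_k$, which corresponds to $(B_k + 2Z_k)/B_k \subseteq Z_k/B_k$, gives
$$\ker\bigl(\z \otimes H_k(C_\bullet) \to H_k(C_\bullet;\z)\bigr) \;\cong\; \frac{B_k + (Z_k \cap 2C_k)}{B_k + 2Z_k}.$$
Now I would apply the second isomorphism theorem with $A = Z_k \cap 2C_k$ (which contains $2Z_k$) and $B = B_k + 2Z_k$: the numerator equals $A + B$, so the quotient is $A/(A \cap B)$.

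The final simplification is to show $A \cap B = 2Z_k + (B_k \cap 2C_k)$. The containment $\supseteq$ is obvious; for $\subseteq$, if $2c = 2z + b$ with $z \in Z_k$ and $b \in B_k$, then $b = 2(c - z) \in 2C_k$, so $b \in B_k \cap 2C_k$. The resulting quotient $(Z_k \cap 2C_k)/(2Z_k + (B_k \cap 2C_k))$ is precisely the definition of $H_k^{(2)}(C_\bullet)$, and tracing through shows this isomorphism is realized by the left-hand map, which is therefore injective with image equal to the kernel of the right-hand map.

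I expect the only subtle point to be the modular-law manipulation identifying $(Z_k \cap 2C_k) \cap (B_k + 2Z_k)$ with $2Z_k + (B_k \cap 2C_k)$; the rest parallels the standard universal coefficient argument, with care taken because the multiplication-by-$2$ map on $C_\bullet$ has a kernel (the $2$-torsion), so one cannot invoke UCT directly.
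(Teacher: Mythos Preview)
Your argument is correct. It covers the same ground as the paper's proof but is organized a little differently: the paper verifies injectivity of the left map and exactness at the middle by two short element chases, whereas you compute the kernel of the reduction map $\z\otimes H_k(C_\bullet)\to H_k(C_\bullet;\z)$ in one shot and identify it with $H_k^{(2)}(C_\bullet)$ via the second isomorphism theorem, obtaining both injectivity and exactness simultaneously. The underlying computations are identical (if $z=b+2c$ with $z\in Z_k$ then $2c\in Z_k$; if $2c=2z'+b$ with $z'\in Z_k$, $b\in B_k$ then $b\in 2C_k$), so this is really a repackaging rather than a different method.
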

\begin{proof}
The right-hand map is defined by $1\otimes[z]\mapsto [1\otimes z]$ for cycles $z$. Clearly the $[1\otimes z]$ is still a cycle. This is well-defined because boundaries $1\otimes \partial w$ map to boundaries $\partial(1\otimes w)$.  We claim that the left-hand map is an injection. Suppose a cycle $2u$ maps to $0$ in $\z\otimes H_k(C_\bullet)$. So $1\otimes 2u=\partial (1\otimes w)$, implying $1\otimes 2u=1\otimes \partial w$ in $\z\otimes C_\bullet$. Therefore, $\partial w = 2u+2z$, where $z$ is a cycle. Therefore $2u\in 2Z_k\cup (B_k\cap 2C_k)$, and hence equals $0$ in the domain. Finally, to see exactness at the middle, suppose that $1\otimes[z]\mapsto 0\in H_k(C_\bullet;\z)$. Then $1\otimes z=\partial (1\otimes w)$. Therefore $\partial w=z+2x$. Hence $z-\partial w\in 2C_\bullet$, and the homology class $[z]=[z-\partial w]$ is represented by a cycle in $2C_\bullet$.
\end{proof}

\begin{proof}[Proof of Theorem~\ref{thm:vanish}]
We apply Lemma~\ref{lem:uct}, using propositions~\ref{prop:oddprime} and \ref{prop:2tor}: Notice first that $H_1(\mathbb L^{(2)}_{\bullet,n};\Z)$ surjects onto $H_1^{(2)}(\mathbb L_{\bullet,n};\Z),$ because $\mathbb L_{1,n}^{(2)}=2\mathbb L_{1,n}$. Thus $\z\otimes H_1(\mathbb L_{\bullet,n};\Z)$ is trapped between two zero groups and is therefore zero. And tensoring with $\zh$ things are even easier, since $\zh\otimes H_1(\mathbb L_{\bullet,n})$ injects into $H_1(\mathbb L_{\bullet,n};\zh)$.\end{proof}

\subsection{Plan of the proofs of Propositions~\ref{prop:oddprime} and ~\ref{prop:2tor}}
Because the proofs of these two propositions are somewhat technical, we would like to give an overview of the structure of these proofs. Fix $n$, and to simplify notation let $\mathbb L_{\bullet}=\mathbb L_{\bullet,n}$.
The idea will be, with two different sets of coefficients, to construct a vector field $\Delta=\Delta_0\cup \Delta_1$, where $\Delta_i\colon\mathbb L_i\to\mathbb L_{i+1}$, which has no critical vectors in degree $1$ and then appeal to Theorem~\ref{thm:morse-complex} to conclude the degree $1$ homology vanishes.

First consider $\zh$ coefficients. Since $H_0(\mathbb L_{\bullet};\zh)\cong \zh\otimes{\sf L}_{n}$ has a well-known basis, called the Hall basis, our strategy in the $\zh$ case will be to define $\Delta_0(J)$ to be some nontrivial contraction of $J$ for every non-Hall tree $J$. In particular, we will define a ``Hall problem'' of a 
tree $J$ to be a place in $J$ where two Hall trees meet at a vertex, but their bracket is not itself Hall. 
$\Delta_0(J)$ then contracts an edge at the base of one of the two Hall trees. The resulting vector field has no closed gradient paths, in some sense because the Hall basis algorithm ``works.'' The way we rigorously prove it is to show that a natural ``Hall order" defined on trees always increases as one moves along a gradient path.
This is the most one can hope to do for $\Delta_0$, because Hall trees need to survive as a basis for $H_0(\mathbb L_{\bullet};\zh)$. 

All the trees in the image of $\Delta_0$ are not critical, so it suffices now to define $\Delta_1$ to be nonzero on all the other degree $1$ trees. In analogy with degree $0$, we say a tree is ${\rm Hall}_1$ if it is in the image of $\Delta_0$, because these are the trees on which $\Delta_1$ needs to vanish. We combinatorially characterize what it means to be ${\rm Hall}_1$, and define a ``${\rm Hall}_1$ problem'' to be one of an exhaustive list of ways that a tree can fail to be ${\rm Hall}_1$. Finally we define $\Delta_1$ for each different type of ${\rm Hall}_1$ problem as a certain contraction of an edge within the ${\rm Hall}_1$ problem. The resulting vector field is again shown to be gradient by arguing that the Hall order increases as one moves along gradient paths. This then proves the $\zh$ case, and in fact proves Proposition~\ref{prop:oddprime} since the chain complexes
$\zh\otimes\mathbb L_{\bullet}$ and $\Z\otimes \mathbb L^{(2)}_{\bullet}$ are isomorphic.

For $\z$ coefficients the argument is similar, except now we use the fact proven by Levine \cite{L3}, that $H_0(\mathbb L_{\bullet};\z)\cong\z\otimes{\sf L}'_n$ has a basis given by Hall trees plus trees $(H,H)$ where $H$ is Hall. We call the trees in this basis ${\rm Hall}'$ trees, and proceeding as before, we define $\Delta'_0$ by contracting ${\rm Hall}'$ problems. Defining a ${\rm Hall}'_1$ tree to be a tree in the image of $\Delta'_0$, a ${\rm Hall}'_1$ problem is one of an exhaustive list of ways that a tree can fail to be ${\rm Hall}'_1$. Then $\Delta'_1$ is defined for these different ${\rm Hall}'_1$ problems by contracting certain edges within the problems. We argue that the Hall order increases along gradient paths, except in one case, and using special arguments to take care of this case, this shows that there are no closed gradient paths.

\subsection{Proof of Proposition~\ref{prop:oddprime}}
We prove both cases simultaneously. Recall that the chain group $\mathbb L_{\bullet}$ is defined as the quotient of a free $\Z$ module of oriented trees by relations $(J,-\mathrm{or})=-(J,\mathrm{or})$. In particular, a tree $J$ will either generate a $\Z$- or a $\z$-summand depending on whether it has an orientation reversing automorphism, so that $\mathbb L_{\bullet}$ is a direct sum of copies of $\Z$ and $\z$. We claim that 
the nonzero trees in $\zh\otimes\mathbb L_{\bullet}$ and $\mathbb L^{(2)}_{\bullet}$ are the same.  Multiplying by $2$ and adjoining $1/2$ both have the effect of killing the $2$-torsion, and the additional symmetric trees in $\mathbb L^{(2)}_{2}$ are not $2$-torsion, so that this is indeed true.
 
To define a gradient vector field, first we need to specify a basis.
Choose orientations for each tree. Define the basis of the free $\zh$-module 
$\zh\otimes\mathbb L_{\bullet}$ to be the trees with specified orientations, except for nonzero symmetric degree $2$ trees as on the right of figure~\ref{hall2}. In that case the basis element is defined to be $1/2$ the given oriented tree. For the free $\Z$-module $\mathbb L^{(2)}_{\bullet}$, define the basis to be exactly twice the oriented trees just mentioned.  In fact $\zh\otimes\mathbb L_{\bullet}\cong \mathbb L^{(2)}_{\bullet}$ as chain complexes with these specified generators.
Thus it makes sense to construct a gradient vector field on both $\zh\otimes\mathbb L_{\bullet}$ and $\mathbb L^{(2)}_{\bullet}$ simultaneously. We will construct such a vector field with
no critical generators in degree $1$.  We will work with $\zh\otimes\mathbb L_{\bullet}$ but since the correspondence of bases respects the boundary operator, this will simultaneously prove the $\mathbb L^{(2)}_{\bullet}$ case.

The vector field will consist of two pieces $\Delta_{0}\colon \zh\otimes\mathbb L_{0}\to\zh\otimes\mathbb L_{1}$, and $\Delta_1\colon \zh\otimes\mathbb L_{1}\to\zh\otimes\mathbb L_{2}$. 
$\Delta_{0}$ is
constructed via the Hall Basis algorithm for the free Lie algebra. So we need to set up some machinery to explain this. Our presentation follows and expands upon \cite{Reu}.

Recall that for rooted trees $J_1$ and $J_2$, the rooted tree $(J_1,J_2)$ is defined by identifying the roots together to a single vertex, and attaching a new rooted edge to this vertex. Extend this notation to $(J_1,\ldots,J_k)$, which is defined by identifying the roots of all the trees $J_i$ to a single vertex, and attaching a new rooted edge to this vertex.
Thus the two trees in Figure~\ref{symfig} are notated $((1,2),(1,2))$ and $((1,2,3),(1,2,3))$, respectively.

If $J_1,\ldots, J_k$ are oriented, the tree $(J_1,\ldots,J_k)$ will be oriented by numbering the root edge first, then the edges of $J_1$, followed by the edges of $J_2$, etc.  

Given a rooted tree, suppose one deletes an internal vertex. Then there are multiple connected components, including the one containing the root. Any of the connected components that does not contain the root can itself be regarded as a rooted tree, by filling in the deleted vertex as a root. Such a tree will be called a \emph{full subtree.}

With this notation and terminology in hand, we can characterize those rooted trees which have orientation-reversing automorphisms, and are therefore zero when tensoring with $\zh$.
Indeed, any rooted tree is zero which contains a full subtree of the form $(J_1,\ldots, J_m)$ where $J_i=J_k$ for some $i\neq k$ with $J_i=J_k$ having an odd number of edges. For example, this will be true when $J_i$ is unitrivalent. Hence, over $\zh$ we may assume that for trees
in $\mathbb L_{0}$ and in $\mathbb L_{1}$ the emanating subtrees at every vertex are \emph{distinct}.
Exemplar trees with orientation preserving and reversing automorphisms are pictured in Figure~\ref{symfig}.
\begin{figure}
\begin{center}
$
\begin{minipage}{.6in}
\includegraphics[width=.6in]{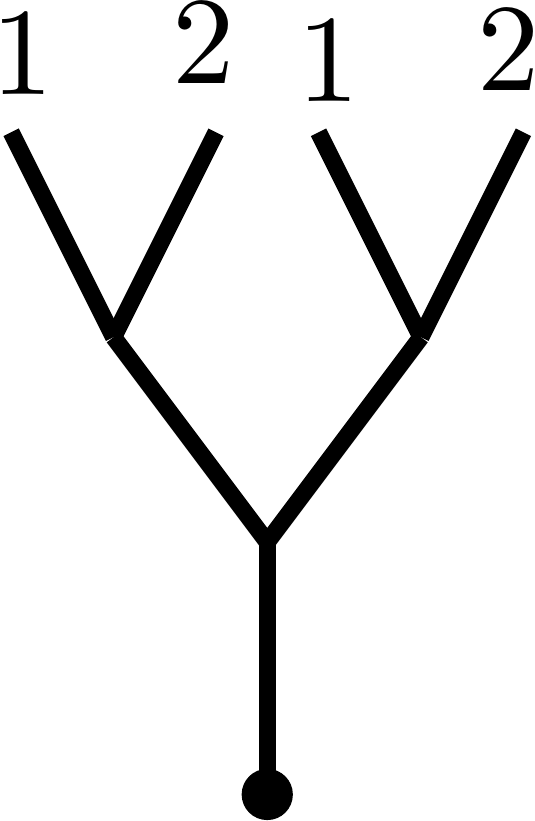}
\end{minipage}
=0\in\zh\otimes \mathbb L_{0,4}
\hspace{.5in}
\begin{minipage}{.6in}
\includegraphics[width=.6in]{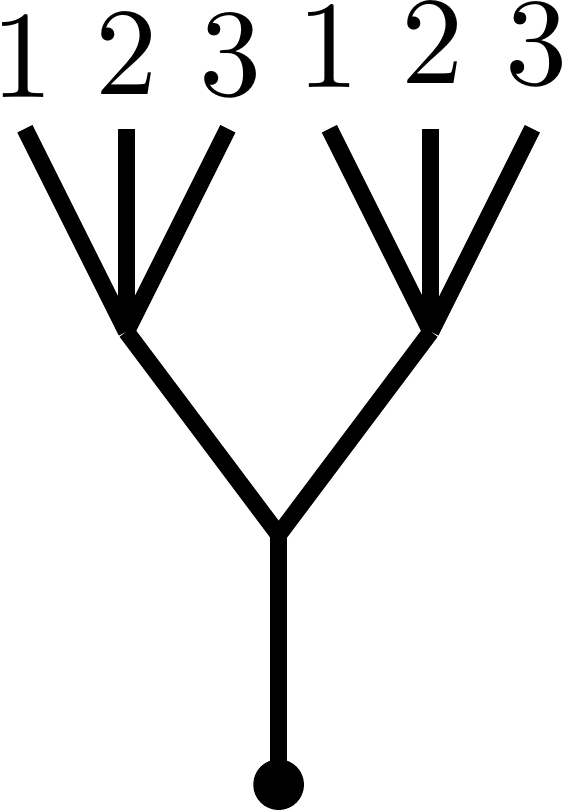}
\end{minipage}
\neq 0\in\zh\otimes \mathbb L_{2,6}
$
\end{center}
\caption{The tree $((1,2),(1,2))$ on the left has an orientation reversing symmetry, whereas the tree $((1,2,3),(1,2,3))$ on the right has an orientation preserving symmetry.}\label{symfig}
\end{figure}

\begin{defn}\label{def:order}
Define the \emph{weight} of a tree, denoted by $|J|$, to be the number of leaves (the number of univalent vertices not counting the root).
We recursively define an order relation, called the \emph{Hall order}, on (unoriented) labeled trees in the following way. 
\begin{enumerate}
\item Trees of weight $1$ are ordered by an ordering on the index set. 
\item If $|J|<|K|$, then $J\succ K$.
\item If $|J|=|K|$ and $J=(J_1,\ldots, J_\ell)$, $K=(K_1,\ldots,K_m)$ with $\ell<m$ then $J\succ K$.
\item If $|J|=|K|$ and $J=(J_1,\ldots,J_m)$, $K=(K_1,\ldots,K_m)$, assume that $J_1\preceq J_2\preceq\cdots \preceq J_m$ and $K_1\preceq K_2\preceq\cdots\preceq K_m$. Then compare the two trees lexicographically:
$J\prec K$ if an only if there is some index $\ell$, $1\leq \ell <m$, such that $J_i=K_i$ for all $i\leq \ell$ and $J_{\ell+1}\prec K_{\ell+1}$.
\end{enumerate}
\end{defn}

\begin{lem}\label{subtree}
Suppose one replaces a full subtree of a tree by a tree with increased Hall order. Then the new tree has larger Hall order than the original.
\end{lem}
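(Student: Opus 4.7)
The plan is a short induction on the depth of the replaced subtree in $T$. Write $T=(T_1,\ldots,T_m)$. If the full subtree $S$ equals some branch $T_i$, then the replaced tree is $\tilde T=(T_1,\ldots,T_{i-1},S',T_{i+1},\ldots,T_m)$. Otherwise $S$ sits strictly inside some $T_i$, and replacing $S$ by $S'$ inside $T_i$ yields a new branch $\tilde T_i$ for which the inductive hypothesis applies, giving $\tilde T_i\succ T_i$. Either way, the question reduces to the base case of comparing $T=(T_1,\ldots,T_m)$ with a tree $\tilde T$ obtained by replacing exactly one top-level branch $T_i$ by a Hall-larger branch $B$.

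In that base case, clause~(ii) of Definition~\ref{def:order} forces $|B|\leq|T_i|$ from $B\succ T_i$. If the inequality is strict, then $|\tilde T|<|T|$ and clause~(ii) immediately yields $\tilde T\succ T$. So the remaining case is $|B|=|T_i|$: then $|\tilde T|=|T|$, both $T$ and $\tilde T$ have exactly $m$ top-level branches, clause~(iii) does not apply, and the comparison falls to clause~(iv), the lexicographic comparison of sorted branch multisets.

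The only real work, and the step I expect to be the main obstacle, is the following combinatorial fact about sorted multisets in a total preorder: if one element $c$ of a multiset $M$ is replaced by a strictly larger element $c'$, then the sorted-increasing sequence of $M'=(M\setminus\{c\})\cup\{c'\}$ is lexicographically strictly greater than that of $M$. I would verify this by comparing cumulative counts: for any threshold $x\prec c$ or $x\succeq c'$ the counts of elements $\preceq x$ in $M$ and $M'$ agree, so the two sorted sequences agree through every position occupied by elements $\prec c$ as well as through $k(c)-1$ subsequent copies of $c$, where $k(c)$ is the multiplicity of $c$ in $M$. At the next position, sorted $M$ still lists a $c$ while sorted $M'$ has already moved to something $\succ c$, because $M'$ has one fewer $c$ and one extra element $\succ c$. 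Applying this with $M$ the multiset of top-level branches of $T$ and $c=T_i$, $c'=B$, clause~(iv) then delivers $\tilde T\succ T$ and completes the induction.
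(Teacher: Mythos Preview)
Your argument is correct and follows essentially the same route as the paper's own proof: both reduce by induction on depth to the case of replacing a single top-level branch, split on whether the weight drops or stays the same, and in the equal-weight case invoke the lexicographic clause~(iv). The paper simply asserts the last step (``by the lexicographic definition''), whereas you spell out the multiset comparison carefully; your added detail is sound and arguably clarifies a point the paper leaves implicit.
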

\begin{proof}
If the new subtree tree has lower weight, then this is clear.  
If the new tree has the same weight, then recursively it will suffice to consider the replacement $(J_1,\ldots,J_\ell,\ldots, J_m)\mapsto (J_1,\ldots,J_\ell',\ldots,J_m)$ where the full subtree $J_\ell$ has been replaced by $J_\ell'$, with $J_\ell\prec J_\ell'$. By the lexicographic definition (case (iv) of Definition~\ref{def:order}), the whole tree is also larger with respect to $\prec$. 
\end{proof}

\begin{defn}\label{def:Hall-Hallproblem-contraction}
\begin{enumerate}
\item The set of \emph{Hall trees} is the subset of (unoriented) labeled rooted unitrivalent trees, defined recursively as follows. 
All weight $1$ trees are Hall. The bracket of two Hall trees $(H_1,H_2)$ with $H_1\prec H_2$ is Hall if and only if either $H_1$ is of weight $1$, or $H_1=(H',H'')$ with $H'\prec H''$ and $H''\succeq H_2$.
\item Given a tree $J$, a \emph{Hall problem} is a full subtree of $J$ of the form $(H_1,H_2)$ where $H_1\prec H_2$ are both Hall, but $H_1=(H',H'')$ with $H'\prec H''\prec H_2$.
\item The \emph{contraction of a Hall problem} is the tree obtained by replacing the full subtree $(H_1,H_2)$ by the full subtree $(H',H'',H_2)$.
\end{enumerate}
\end{defn}

The above definitions are made for unoriented trees, but we will often not distinguish between oriented and unoriented trees. For example, when we say that an oriented tree is Hall, we mean that the underlying unoriented tree is Hall. %We note that any two Hall problems in a tree are disjoint.

We now define a vector field $\Delta_0\colon \zh\otimes\mathbb L_{0}\to\zh\otimes\mathbb L_{1}$ as follows. 
If $H$ is a Hall tree, then $\Delta_0(H)=0$. Otherwise, suppose $J$ is a tree where there is at least one Hall problem. Define 
$$
\Delta_0(J)=\max\{J^c\,|\, J^c\text{ is the contraction of a Hall problem in }J\}.
$$ 

\begin{lem}~\label{lem:delta0gradient}
$\Delta_{0}$ is a gradient vector field.
\end{lem}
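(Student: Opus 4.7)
The plan is to verify the two defining conditions: that $\Delta_0$ is a well-defined vector field and that no closed gradient paths exist.

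\emph{Vector field.} First, every non-Hall degree-$0$ tree $J$ contains a Hall problem, so $\Delta_0(J)$ is defined. I would prove this by induction on the weight of $J$: writing $J=(J_1,J_2)$ with $J_1\prec J_2$, if either $J_i$ is non-Hall, induction gives a Hall problem inside $J_i\subset J$; otherwise both $J_i$ are Hall, and $J$ being non-Hall forces $J_1=(J_1',J_1'')$ with $J_1'\prec J_1''\prec J_2$, making $J$ itself a Hall problem. Second, $\Delta_0(J)=J^c$ is obtained from $J$ by contracting a single internal edge, so $J^c$ has a unique $4$-valent vertex whose three expansions (with signs $\pm 1$) give $\partial J^c$; one of them is $J$, giving invertible coefficient $\pm 1$. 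Third, for uniqueness of preimages: given a degree-$1$ tree $K$ whose unique $4$-valent vertex $v$ has adjacent subtrees $A, B, C$ besides the root direction, any preimage of $K$ corresponds to a partition of $v$'s four half-edges. Labelling so that $A\prec B\prec C$, the Hall-problem requirement $H''\prec H_2$ forces $H_2$ to be the Hall-largest of $\{A,B,C\}$, namely $C$, so only the expansion pairing the root direction with $C$ can have a Hall problem at the contracted edge; hence $K$ has at most one preimage under $\Delta_0$.

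\emph{No closed gradient paths.} I would show that the Hall order strictly increases along each step $\mathbf{a}_i\to\mathbf{b}_i\to\mathbf{a}_{i+1}$ of a gradient path. The Hall problem in $\mathbf{a}_i$ is a full subtree $((H',H''),H_2)$ with $H'\prec H''\prec H_2$, and $\mathbf{a}_{i+1}$ is obtained by replacing this subtree with either $(H',(H'',H_2))$ or $(H'',(H',H_2))$. The key sublemma is
\begin{equation*}
((H',H''),H_2)\prec (H',(H'',H_2))\quad\text{and}\quad((H',H''),H_2)\prec(H'',(H',H_2))
\end{equation*}
in Hall order, whenever $H'\prec H''\prec H_2$. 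The weight inequalities $|H'|\geq |H''|\geq |H_2|$ forced by the Hall order (rule (ii) of Definition~\ref{def:order}) reduce this to a short case analysis of the sorted root children via rule (iv); in the remaining lex-comparisons, the strict inequalities $H'\prec H''$ and $H''\prec H_2$ propagate to give the desired strict dominance. Lemma~\ref{subtree} then promotes the sublemma from the full-subtree level to $\mathbf{a}_i\prec\mathbf{a}_{i+1}$. Since there are only finitely many trees of bounded weight, this strict monotonicity excludes closed gradient paths.

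The main obstacle is verifying the sublemma, which branches on whether the first-sorted-child comparison is decided by rule (ii) (strict weight inequality) or rule (iv) (lexicographic tiebreak), and on whether $H'$ is Hall-smaller or Hall-larger than $(H'',H_2)$ as a root child. Each branch is elementary, but one must also confirm that the "Case B" configuration $H'\preceq(H'',H_2)$ can arise for the first inequality while being ruled out for the second by the constraint $|H_2|\geq 1$ combined with $|H'|\geq|H''|$; this asymmetry is ultimately what makes the second expansion tree always dominate, and hence makes the gradient flow well-defined.
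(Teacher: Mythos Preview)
Your proposal is correct and follows essentially the same route as the paper: both arguments hinge on the sublemma that the two ``other'' expansions of $\Delta_0(J)$ strictly dominate $J$ in the Hall order, and then invoke Lemma~\ref{subtree} to promote this to the whole tree. The one structural difference is that you prove injectivity of $\Delta_0$ directly (by observing that among the three expansions of the $4$-valent vertex only the one pairing the root direction with the Hall-largest subtree $C$ can possibly be a Hall problem), whereas the paper derives injectivity as a corollary of the monotonicity itself: if $\Delta_0(J_1)=\Delta_0(J_2)$ with $J_1\neq J_2$, then $J_2$ is one of the other expansions, so $J_1\prec J_2$, and symmetrically $J_2\prec J_1$. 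Your direct argument is slightly cleaner for injectivity alone, but the paper's unification has the advantage that all three conditions (injectivity, the $\pm J$ coefficient, and termination of paths) fall out of a single inequality check---so there is less to verify overall, and the argument recycles verbatim in the $\z$-coefficient proof of Lemma~\ref{lem:deg1grad}.
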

\begin{proof}
We need to check the following three conditions.
\begin{enumerate}
\item  $\Delta_0(J_1)\neq \Delta_0(J_2)$ for distinct $J_1,J_2$ which are non-Hall.
\item $\partial\Delta_0(J)=\pm J+\text{ other trees, }$ for $J$ non-Hall. 
\item All gradient paths terminate.
\end{enumerate}
All three of these will follow from the fact that the two expansions of $\Delta_0(J)$, other than $J$ itself, have larger Hall order, which we now check. Indeed, by Lemma~\ref{subtree}, we just need to show
$$
((H',H''),H_2)\prec ((H',H_2),H'') \text{ and } ((H',H''),H_2)\prec ((H'',H_2),H')
$$
Note we are assuming that $H'\prec H''\prec H_2$, corresponding to the Hall problem $((H',H''),H_2)$ in $J$. Then $(H',H_2)\prec H'\prec H''$, so the term $((H',H_2),H'')$ is in non-decreasing order. So applying the lexicographic definition, it suffices to show that $(H',H'')\prec (H',H_2)$ which follows from Lemma~\ref{subtree}. Now for the second inequality above, the two component trees of $((H'',H_2),H')$ may occur in either order, or even be equal. If they are equal, the tree is zero, and we don't need to consider it, but because we want to re-use this analysis in the case of $\z$ coefficients, we prefer not to use this fact. In any event, no matter what the order, it suffices to observe that $(H',H'')\prec (H'',H_2)$ and $(H',H'')\prec H'$. Thus the two expansions of $\Delta_0(J)$, other than $J$ itself, have larger Hall order, as claimed.

Condition (i) follows because $\Delta_0(J_1)=\Delta_0(J_2)$ implies that $J_2$ is another expansion of $\Delta_0(J_1)$ so that $J_1\prec J_2$. Symmetrically $J_2\prec J_1$, which is a contradiction. 
 Condition (ii) follows since $\partial\Delta_0(J)=J+J'+J''$ where $J'$ and $J''$ have increased Hall order.
Condition (iii) now follows since the Hall order increases as one flows along a gradient path, implying that all gradient paths must terminate.
\end{proof}

So far we have adapted the well-known Hall algorithm to the context of homological vector fields. Since the critical generators in degree zero are Hall trees, we have reproduced the standard fact that the free Lie algebra $\zh\otimes{\sf L}_n\cong H_0(\mathbb L_{\bullet};\zh)$ is generated by Hall trees. Since these trees actually form a basis, to make further progress in killing degree $1$ generators we will need to construct a vector field $\Delta_{1}\colon\zh\otimes\mathbb L_{1}\to\zh\otimes \mathbb L_{2}$, extending $\Delta_0$.

\begin{defn}
A nonzero tree, $H\in\zh\otimes \mathbb L_{1}$, is said to be ${\emph Hall_1}$ if $H=\Delta_{0}(J)$ for some tree $J\in\zh\otimes\mathbb L_{0}$.
\end{defn}

We now characterize ${\rm Hall}_1$ trees. 

\begin{lem}\label{lem:Hall1}
A tree $H\in\zh\otimes \mathbb L_{1}$ is ${\rm Hall}_1$ if and only if both of the following two conditions hold.
\begin{enumerate}
\item It contains a full subtree $(A,B,C)$, where $A\prec B\prec C$ are all Hall, such that either $|A|=1$, or $A=(A',A'')$ with $A'\prec A''$ and $A''\succeq B$.
\item The tree obtained by replacing $(A,B,C)$ with $((A,B),C)$ is nonzero, and $H$ is the largest among all contractions of Hall problems of this expanded tree.
\end{enumerate}
\end{lem}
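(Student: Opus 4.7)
The approach is direct unpacking of definitions, supplemented by one small order-theoretic observation. Both directions of the equivalence amount to recognizing that the data of a Hall problem in a degree-$0$ tree $J$ is exactly encoded by the shape $(A,B,C)$ appearing inside its $\Delta_0$-image.

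For the forward direction I would assume $H = \Delta_0(J)$ for some non-Hall $J \in \zh\otimes\mathbb L_0$. By the construction of $\Delta_0$, the tree $J$ contains at least one Hall problem, i.e.\ a full subtree $((H',H''),H_2)$ with $(H',H'')$ and $H_2$ both Hall, $H' \prec H'' \prec H_2$, and $H$ is obtained by contracting the Hall problem whose contraction is maximal in Hall order. Setting $A:=H'$, $B:=H''$, $C:=H_2$, the corresponding full subtree of $H$ is $(A,B,C)$ with $A \prec B \prec C$ all Hall. Since $(A,B)=(H',H'')$ is itself Hall with $A\prec B$, the recursive definition of Hall trees forces ``$|A|=1$, or $A=(A',A'')$ with $A' \prec A''$ and $A'' \succeq B$,'' which is condition (i). Condition (ii) is then immediate: replacing $(A,B,C)$ by $((A,B),C)$ recovers $J$, which is nonzero by hypothesis, and $H=\Delta_0(J)$ is the claimed maximality.

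For the reverse direction, suppose $H$ satisfies (i) and (ii) for some full subtree $(A,B,C)$, and let $J$ denote the expansion replacing $(A,B,C)$ by $((A,B),C)$. I would verify that $((A,B),C)$ is a Hall problem in $J$ so that $\Delta_0$ applies. Condition (i) supplies that $(A,B)$ and $C$ are Hall and that $A\prec B\prec C$; the only item that needs argument is $(A,B)\prec C$. Here the key observation is the weight clause of Definition~\ref{def:order}: strictly lower weight means strictly higher Hall order, so $B\prec C$ forces $|B|\geq |C|$. Since $|A|\geq 1$, this gives
\[
|(A,B)| \;=\; |A|+|B| \;\geq\; 1+|C| \;>\; |C|,
\]
so the weight clause again yields $(A,B)\prec C$. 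Thus $((A,B),C)$ is a genuine Hall problem in the nonzero tree $J$, and its contraction is precisely $H$. The maximality clause in (ii) then says $\Delta_0(J)=H$, so $H$ is ${\rm Hall}_1$.

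There is no deep obstacle; the argument is a dictionary between the inductive construction of Hall trees and the structural description of $\Delta_0$-images. The one technical point to handle carefully is the direction of the weight convention (higher weight equals lower Hall order), used to deduce $(A,B)\prec C$ from $B\prec C$, and a brief check that the full subtree $(A,B,C)$ in $H$ unambiguously determines $J$ up to the orientation conventions of $\zh\otimes\mathbb L_\bullet$, so that the operation ``replace $(A,B,C)$ by $((A,B),C)$'' in condition (ii) is well-defined.
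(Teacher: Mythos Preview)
Your proof is correct and follows essentially the same approach as the paper's: both directions amount to translating between condition (i) and the statement ``$(A,B,C)$ is the contraction of a Hall problem $((A,B),C)$,'' with condition (ii) handling maximality. You supply more detail than the paper, in particular the weight argument showing $(A,B)\prec C$, which the paper leaves implicit; conversely the paper remarks that the other two expansions $((A,C),B)$ and $((B,C),A)$ are not Hall problems (so $((A,B),C)$ is the only candidate preimage), an observation your direct identification of $A,B,C$ from the contracted Hall problem renders unnecessary.
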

\begin{proof}
The first condition says exactly that $(A,B,C)$ is the contraction of a Hall problem $((A,B),C)$, using the fact that $(A,B)$ is Hall. Moreover the other two expansions $((A,C),B)$ and $((B,C),A)$ are not Hall problems, so to check whether $H$ is in the image of $\Delta_0$ we need only check that it is the maximal contraction of this expanded tree, as per the second statement. 
\end{proof}

A ${Hall}_1$ \emph{problem} of a nonzero tree $J$, will be defined by negating the characterization of the previous lemma. Cases (i) and (ii) of the following Definition~\ref{def:hall1problem} are the two ways that condition (i) of the previous Lemma can fail, and cases (iii) and (iv) are the two ways that condition (ii) of the previous Lemma can fail.
\begin{defn}\label{def:hall1problem}  A \emph{Hall}$_1$ \emph{problem} in a nonzero tree $J\in\zh\otimes\mathbb L_{1}$ is defined to be any one of the following situations:
\begin{enumerate}
\item $J$ contains the full subtree $(A,B,C)$ where at least one of $A,B,C$ fails to be Hall. So there is a Hall problem in one of these trees.
\item  $J$ contains the full subtree $(A,B,C)$ where $A\prec B\prec C$ are all Hall, but $A=(A',A'')$ with $A'\prec A''\prec B$. 
\item $J$ contains a full subtree $(A,B,C)$, where $A\prec B\prec C$ are all Hall, such that either $|A|=1$, or $A=(A',A'')$ with $A'\prec A''$ and $A''\succeq B$; and the expanded tree containing $((A,B),C)$ is nonzero, but $J$ is not the largest of all contractions of the  expanded tree.
\item $J$ contains a full subtree $(A,B,C)$, where $A\prec B\prec C$ are all Hall, such that either $|A|=1$, or $A=(A',A'')$ with $A'\prec A''$ and $A''\succeq B$; but the expanded tree containing $((A,B),C)$ is zero. This means that $((A,B),C)$ occurs in the tree $J$, and there is a symmetry in the expanded tree exchanging this for the $((A,B),C)$ expansion of $(A,B,C)$. Such a $J$ is pictured in the middle of Figure~\ref{hall2}, where the expanded tree on the left is $0$.
\end{enumerate}
\end{defn}
\begin{figure}[ht!]
$$
\xymatrix{
0=\begin{minipage}{1in}\includegraphics[width=1in]{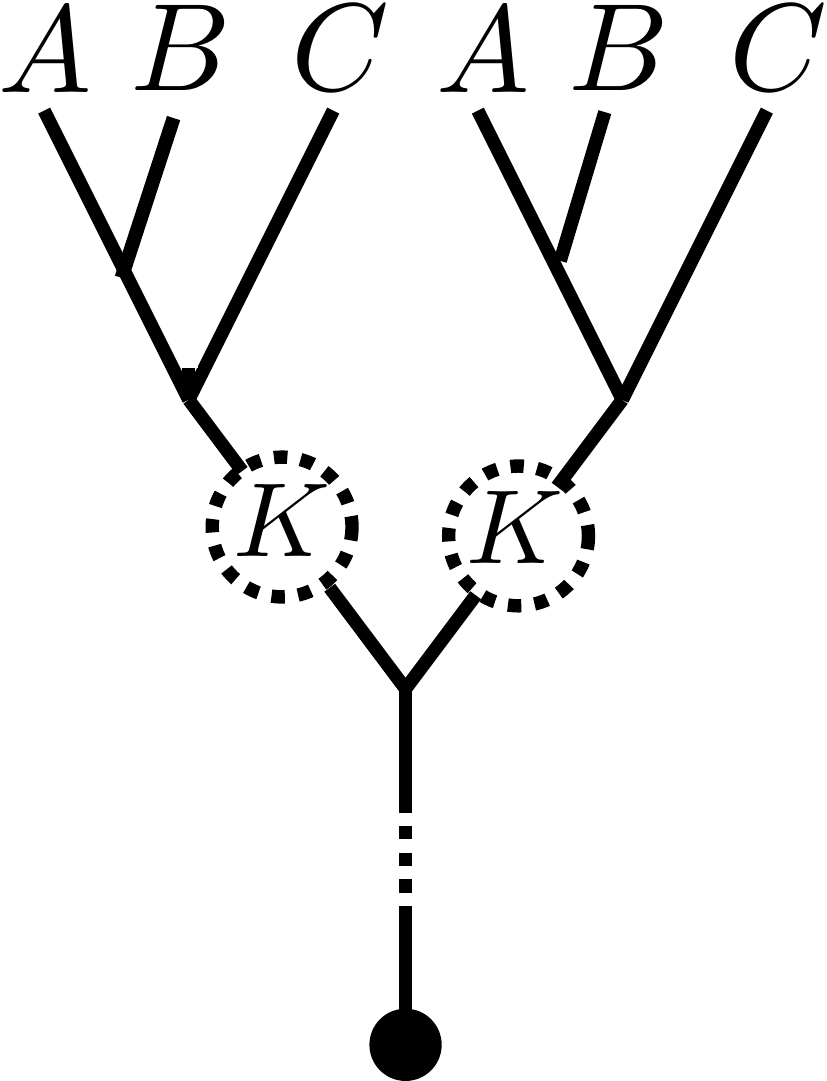}\end{minipage}\ar@{|->}[r]|-{??}&
\begin{minipage}{1in}\includegraphics[width=1in]{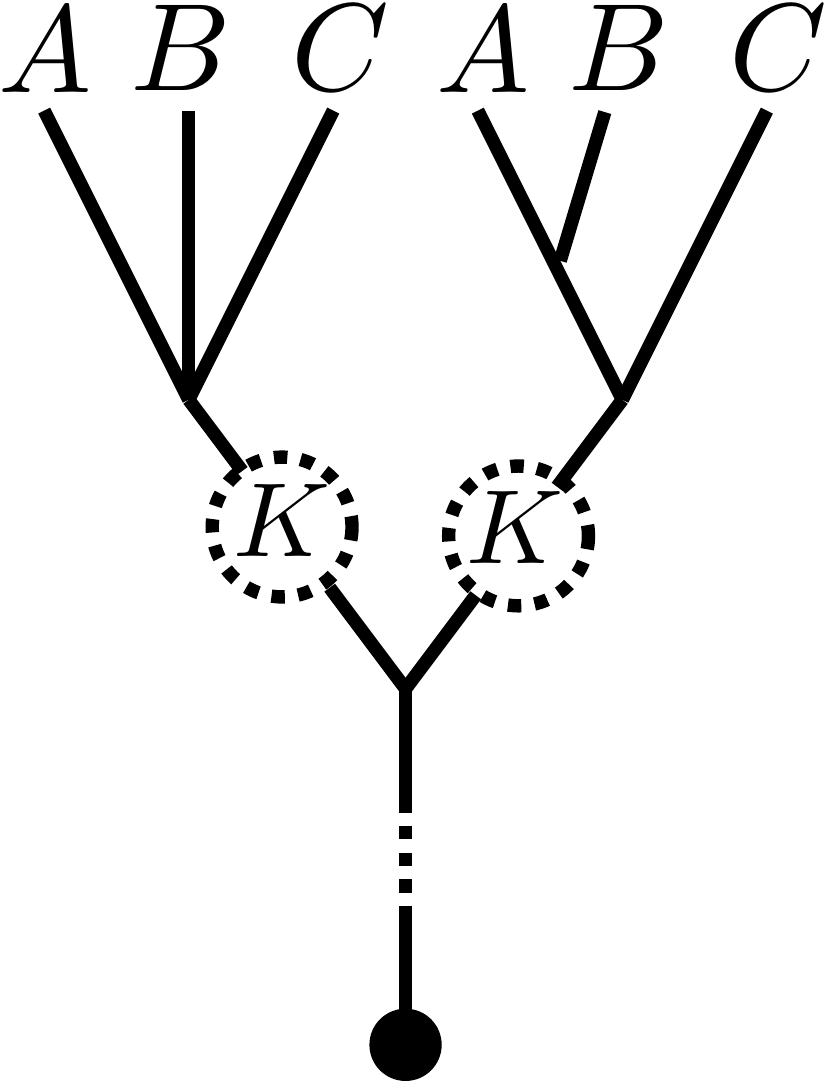}\end{minipage}
\ar@{|->}[r]^{\Delta_1}&\frac{1}{2}\begin{minipage}{1in}\includegraphics[width=1in]{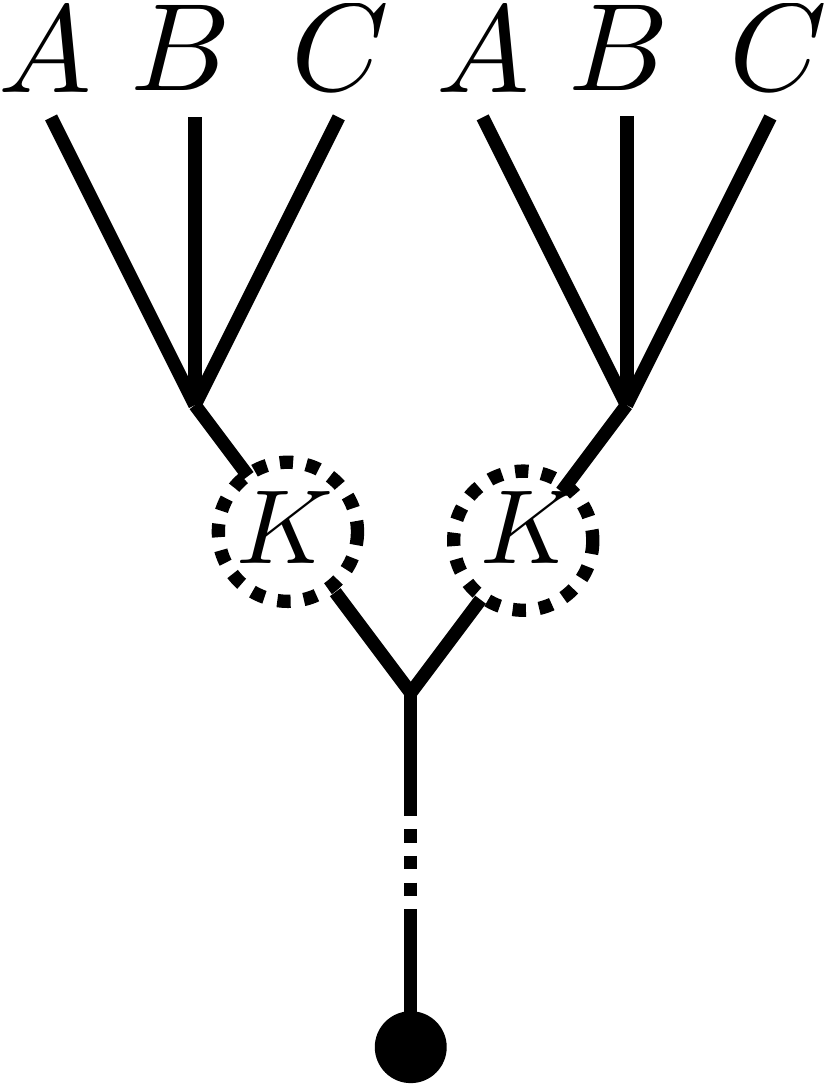}\end{minipage}
}
$$
\caption{The tree, $J$, in the middle has a type (iv) ${\rm Hall}_1$ problem, since the expanded tree on the left is equal to zero in 
$\zh\otimes\mathbb L_{0}$.  The image of $J$ under $\Delta_1$ is pictured on the right. (The sub-trees indicated by the dotted arcs are all trivalent.)}\label{hall2}
\end{figure}

Now we define the vector field $\Delta_1$. 
\begin{defn}\label{def:Delta1}
If a tree is  ${\rm Hall}_1$ we define $\Delta_1$ to be zero. (As we need to, in order to make $\Delta_1$ disjoint from $\Delta_0$.) Otherwise, referring to Definition~\ref{def:hall1problem}, we consider the four types of ${\rm Hall}_1$ problems: 
\begin{enumerate}
\item If $J$ has a type (i) ${\rm Hall}_1$ problem, then define $\Delta_1(J)$ to contract some Hall problem in the trees $\{A,B,C\}$. 

\item If $J$ has a type (ii) ${\rm Hall}_1$ problem, then define $\Delta_1(J)$ to contract the base edge of $A$. 

\item If $J$ has a type (iii) ${\rm Hall}_1$ problem, to define $\Delta_1(J)$, consider the expanded tree $J^e$ containing $((A,B),C)$. Then $\Delta_0(J^e)$ contracts a different Hall problem in $J^e$ (which is maximal among all contractions) than the one that contracts to $J$. Define $\Delta_1(J)$ by contracting the image in $J$ of this other Hall problem in $J^e$. 

\item If $J$ has a type (iv) ${\rm Hall}_1$ problem, define $\Delta_1(J)$ to be the contraction of the other copy of $((A,B),C)$ with a coefficient of $1/2$ (Figure~\ref{hall2}). This contracted tree has a symmetry, but it is orientation-preserving because it exchanges two trees which have a single $4$-valent vertex and are otherwise unitrivalent. As mentioned in the introductory remarks to the proof, $1/2$ the contracted tree is indeed a basis element.
\end{enumerate}
\end{defn}

\begin{lem}
$\Delta_1$ is a gradient vector field.
\end{lem}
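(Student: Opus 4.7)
The plan is to verify the three defining conditions of a gradient vector field for $\Delta_1$, paralleling the argument in Lemma~\ref{lem:delta0gradient}: (a) well-definedness together with compatibility (each basis element belongs to at most one vector of $\Delta_0\cup\Delta_1$); (b) for each non-$\mathrm{Hall}_1$ tree $J$, the tree $J$ appears in $\partial\Delta_1(J)$ with invertible coefficient; and (c) the combined vector field admits no closed gradient paths.

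Condition (a) is almost automatic: $\Delta_0$ and $\Delta_1$ live in different homological degrees, and $\Delta_1$ is declared to vanish on the $\mathrm{Hall}_1$ trees, which are precisely the image of $\Delta_0$. To make $\Delta_1(J)$ well-defined when $J$ has several $\mathrm{Hall}_1$ problems of the same type, I would fix a canonical choice, taking the contraction that is largest in Hall order (exactly as for $\Delta_0$). Injectivity of $\Delta_1$ then follows from the fact, verified below, that $J$ is the unique Hall-minimum expansion of $\Delta_1(J)$.

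For (b), I would go through the four types of $\mathrm{Hall}_1$ problem from Definition~\ref{def:hall1problem}. Case (i), where the problem sits inside one of $A$, $B$, or $C$, reduces directly to Lemma~\ref{lem:delta0gradient} via Lemma~\ref{subtree}. Case (ii) produces a $5$-valent vertex with children $\{A',A'',B,C\}$; using $A'\prec A''\prec B\prec C$ together with Lemma~\ref{subtree}, a direct comparison shows that $J=((A',A''),B,C)$ is the unique Hall-minimum expansion and appears with coefficient $\pm 1$. Case (iii) is the most delicate: the boundary of $\Delta_1(J)$ contains $J$, two expansions of the internal Hall problem contracted by $\Delta_1$ (Hall-larger by the $\Delta_0$ analysis), and three expansions of the $(A,B,C)$ vertex. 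The first of these last three is literally $\Delta_0(J^e)$ and is therefore $\mathrm{Hall}_1$, while the other two must be inspected case-by-case and shown to be either Hall-larger than $J$ or themselves $\mathrm{Hall}_1$ (via an analogous maximality argument inside their pre-image trees). Case (iv) is handled using the $\tfrac{1}{2}$ basis convention for the symmetric contracted tree: the orientation-preserving symmetry swapping the two copies of $(A,B,C)$ doubles the coefficient of $J$ in the boundary, which is rescaled to $\pm 1$ under the basis convention; the remaining expansions are again verified to be Hall-larger.

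Finally, for (c), a gradient path stays in a single homological degree; degree-$0$ paths are handled by Lemma~\ref{lem:delta0gradient}, so only degree-$1$ paths via $\Delta_1$ require attention. By the case analysis in (b), every expansion $a_{i+1}$ of $b_i=\Delta_1(a_i)$ other than $a_i$ is either Hall-larger than $a_i$, in which case the path continues with strictly increasing Hall order, or is $\mathrm{Hall}_1$, in which case $\Delta_1$ vanishes there and the path terminates. Either outcome precludes a closed loop. The principal obstacle I expect is the bookkeeping for case (iii), where the Hall-larger-or-$\mathrm{Hall}_1$ dichotomy for the three $(A,B,C)$-expansions must be verified by hand, and the symmetry analysis in case (iv), where one must be careful about orientations to confirm the correct boundary coefficient; these are precisely the spots where naive Hall-order monotonicity breaks down.
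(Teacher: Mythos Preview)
Your approach matches the paper's: show that every expansion of $\Delta_1(J)$ other than $J$ has strictly larger Hall order, which simultaneously gives injectivity, the invertible-coefficient condition, and termination of gradient paths.

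Two points. First, case (iii) is simpler than you anticipate, and the ``Hall-larger or $\mathrm{Hall}_1$'' dichotomy is unnecessary. Write $J^1=\Delta_0(J^e)$ for the maximal contraction of the expanded tree $J^e$, so that $J^1\succ J$ by the very hypothesis of case (iii). The three expansions of the $(A,B,C)$ vertex in $\Delta_1(J)$ are exactly $J^1$ together with two trees obtained from $J^1$ by replacing the full subtree $((A,B),C)$ with $((A,C),B)$ or $((B,C),A)$; by the $\Delta_0$ analysis (Lemma~\ref{lem:delta0gradient}) and Lemma~\ref{subtree} these two are Hall-larger than $J^1$. Since $J^1\succ J$, \emph{all three} are Hall-larger than $J$---no case-by-case inspection or $\mathrm{Hall}_1$ escape hatch is needed.

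Second, your case (i) sketch addresses only the expansions of the contracted Hall problem inside $A$, $B$, or $C$ and omits the expansions of the $4$-valent $(A,B,C)$ vertex itself. These are handled in one line by Definition~\ref{def:order}(iii): replacing the full subtree $(A,B,C)$ by any $(U,V)$ of the same weight strictly increases Hall order.
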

\begin{proof}
As we did for $\Delta_{0}$, we will show that if $J$ is not ${\rm Hall}_1$, the expansions of $\Delta_1(J)$ distinct from $J$ have increased Hall order. So assume $J$ is not ${\rm Hall}_1$. That means there is a ${\rm Hall}_1$ problem in $J$.

We analyze the cases separately. 

Case (i). We have a tree $J$ with a full subtree $(A,B,C)$ which has a Hall problem in one of the $A,B,C$ trees. Contracting this Hall problem and then applying $\partial$ without backtracking will either expand the Hall problem into one of the two other trees besides $J$, which we already showed increases the Hall order, or will expand the $(A,B,C)$ vertex, which increases the Hall order because $(A,B,C)\prec (U,V)$  if $|(A,B,C)|=|(U,V)|$.

Case (ii). This case has the most numerous collection of terms. The full subtree $((A',A''),B,C)$ with $A'\prec A''\prec B\prec C$ gets contracted to $(A',A'',B,C)$ and we must show that all terms of $\partial(A',A'',B,C)$ except $((A',A''),B,C)$ increase in Hall order. If the $4$-valent vertex gets pushed up the tree, like in $((A',A'',B),C)$, then the Hall order goes up, so we need only consider the $5$ possibilities where this doesn't happen:
$$((A',B),A'',C),((A',C),A'',B),((A'',B),A',C),$$
$$((A'',C),A',B),((B,C),A',A'')$$
The components of these trees are not necessarily in order, although they definitely are in the first two trees, and in fact the smallest tree in each of the first two triples has larger Hall order than $(A',A'')$. For the third tree, it could be that either of $(A'',B)$ or $A'$ is larger, and they could even be equal, however it suffices to observe that both are greater than $(A',A'')$. Similar remarks hold for the fourth tree. For the fifth tree, we know $A'\prec A''$, but we have no information on $(B,C)$. So it will suffice to observe that $(A',A'')\prec (B,C)$ and $(A',A'')\prec A'$.

Case (iii). Let $J$ be a nonzero unitrivalent tree with at least two Hall problems, and suppose that $J^{1}$ is the largest contraction of a Hall problem and that $J^2$ is the contraction of another one, so that $J^1\succeq J^2$, and by definition $J^1=\Delta_0(J)$. Case (iii) is the case of the tree $J^2$, assuming it is distinct from $J^1$.
 Now by definition, $\Delta_1(J^2)=J^{12}$, the tree with both Hall problems collapsed. Now we wish to analyze the trees appearing in $\partial(J^{12})$ distinct from $J^2$. Expanding the first vertex yields the tree $J^2$ and two other trees with larger Hall order. Expanding the second vertex yields $J^1$ and two other trees larger than $J^1$. Since $J_1\succ J_2$, this shows the Hall order has increased for all of these trees.
  
Case (iv). Here when we expand the tree with two copies of $(A,B,C)$, we will get twice the original tree $J$, which is why we needed to divide by $2$. The other trees are all increased with respect to $\prec$.
\end{proof}

Thus we have constructed a gradient vector field $\Delta_{0}\cup\Delta_1$ on $\zh\otimes\mathbb L_{0}$ such that every basis element in degree $1$ is either in the range of $\Delta_{0}$ or the support of $\Delta_1$. So in the Morse complex, there are no nonzero degree $1$ chains, implying the first homology is zero. Incidentally, this also reproduces the classical result that the Hall trees (in degree $0$) are independent and form a basis for $\zh\otimes{\sf L}_n\cong H_0(\mathbb L_{\bullet};\zh)$.

This concludes the proof of Proposition~\ref{prop:oddprime}.

\subsection{Proof of Proposition~\ref{prop:2tor}}
The proof is very similar to the proof of Proposition~\ref{prop:oddprime}, but
now trees with orientation-reversing automorphisms are no longer equal to zero in $\z\otimes\mathbb L_{\bullet}$. We therefore adapt the vector field to take account of these, in some sense generalizing Levine's quasi-Hall basis algorithm \cite{L3}. Define the specified basis for $\z \otimes \mathbb L_{\bullet}$ to consist of unoriented trees. This makes sense because tensoring with $\z$ erases the orientation data.

\begin{defn}
\begin{enumerate}
\item A tree in $\z \otimes \mathbb L_{\bullet}$ is a \emph{Hall}$'$ tree if it is either a ${\rm Hall}$ tree as defined previously in Definition~\ref{def:Hall-Hallproblem-contraction} for the $\zh$-coefficients case, or a tree of the form $(H,H)$ where $H$ is a ${\rm Hall}$ tree.
\item A \emph{Hall problem} in a tree $J$ in $\z \otimes \mathbb L_{\bullet}$ is either a ${\rm Hall}$ problem as defined in Definition~\ref{def:Hall-Hallproblem-contraction}, or it is a full subtree of $J$ of the form $(H,H)$, for a Hall tree $H$. Call this latter type of problem a \emph{symmetric Hall problem}.
\item A \emph{Hall}$'$ \emph{problem} in a tree $J$ is  a ${\rm Hall}$ problem which is not of the form $(H,H)$ where 
$J=(H,H)$ and $H$ is Hall.
\item The \emph{contraction} of a ${\rm Hall}'$ problem is defined as in Definition~\ref{def:Hall-Hallproblem-contraction}, with the contraction of  a symmetric Hall problem defined by contracting the root edge of the $(H,H)$ subtree.
\end{enumerate}
\end{defn}

These definitions were set up so that
$$ J\text{ is not Hall } \Leftrightarrow J\text{ has a ${\rm Hall}$ problem.}$$
$$ J\text{ is not Hall}' \Leftrightarrow J\text{ has a ${\rm Hall}'$ problem.}$$

Now, similarly to the $\zh$-coefficients case, $\Delta'_0$ is defined to vanish on ${\rm Hall}'$ trees; and for $J$ not ${\rm Hall}'$, 
$\Delta'_0(J)$ is defined to be the maximal contraction of a ${\rm Hall}'$ problem in $J$. 

\begin{lem}
$\Delta'_0\colon \z\otimes\mathbb L_{0}\to \z\otimes\mathbb L_{1}$ is a gradient vector field.
\end{lem}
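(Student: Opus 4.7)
The plan is to follow the template of Lemma \ref{lem:delta0gradient} closely. I would verify the three defining conditions of a gradient vector field on $\z\otimes\mathbb L_{\bullet}$: (i) $\Delta'_0$ is injective on non-Hall$'$ trees; (ii) for each non-Hall$'$ tree $J$, one has $\partial\Delta'_0(J)=J+\sum_i J_i$ in $\z\otimes\mathbb L_{\bullet}$ with each nonzero $J_i$ of strictly larger Hall order than $J$; and (iii) all gradient paths terminate. Exactly as in Lemma \ref{lem:delta0gradient}, conditions (i) and (iii) follow formally from (ii) via antisymmetry of $\prec$ and the well-ordering of trees by Hall order, so the real content of the proof lies in condition (ii).

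I would split the verification of (ii) according to which type of Hall$'$ problem is contracted by $\Delta'_0(J)$. When the maximal contraction comes from an ordinary Hall problem $((H',H''),H_2)$ with $H'\prec H''\prec H_2$, the argument of Lemma \ref{lem:delta0gradient} transports verbatim: the two expansions of $\Delta'_0(J)$ other than $J$ have strictly larger Hall order via the inequalities $(H',H'')\prec (H',H_2)$ and $(H',H'')\prec(H'',H_2)$ combined with Lemma \ref{subtree}.

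The new case is a symmetric Hall problem $(H,H)$ contained in $J$ with $J\ne(H,H)$. Since $J\in\mathbb L_{0}$ is unitrivalent, the root $u$ of the full subtree $(H,H)$ and the vertex $v$ where the two copies of $H$ meet are each $3$-valent. Contracting the root edge $uv$ merges $u$ and $v$ into a single $4$-valent vertex whose four incident half-edges go to the two $H$-roots (from $v$) and to the two other subtrees $S_1, S_2$ at $u$. There are three partitions of these four half-edges into pairs: only the partition $\{H,H\}\mid\{S_1,S_2\}$ reconstitutes $(H,H)$ and recovers $J$, while the other two partitions $\{H,S_1\}\mid\{H,S_2\}$ and $\{H,S_2\}\mid\{H,S_1\}$ are swapped by the automorphism exchanging the two copies of $H$, hence give isomorphic unoriented trees and so cancel modulo $2$. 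Therefore $\partial\Delta'_0(J)=J$ in $\z\otimes\mathbb L_{\bullet}$, and condition (ii) holds trivially in this case, with no higher-Hall terms to control.

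For condition (i), I would also observe that an ordinary-Hall contraction produces a $4$-valent vertex whose non-root-side neighbors form the multiset $\{H',H'',H_2\}$ of three \emph{distinct} Hall trees, whereas a symmetric-Hall contraction produces a $4$-valent vertex with a repeated neighbor $H$; these multisets cannot coincide, so the two cases cannot share a common image $J^c$, and injectivity of $\Delta'_0$ reduces to injectivity within each case. The main obstacle is the ordinary-Hall case analysis that is inherited directly from Lemma \ref{lem:delta0gradient}; the symmetric-Hall case turns out to be easier than one might fear, since the characteristic-$2$ cancellation of the two $H$-swap expansions removes any need to track higher-Hall terms there.
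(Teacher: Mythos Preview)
Your proposal is correct and follows essentially the same approach as the paper. Both proofs reduce to verifying condition (ii), handle the ordinary Hall problem case by invoking the analysis from Lemma~\ref{lem:delta0gradient} verbatim, and treat the new symmetric case by observing that the two non-backtracking expansions of the contracted $4$-valent vertex are swapped by the $H\leftrightarrow H$ symmetry and hence cancel modulo $2$; the paper phrases this as $\partial(H,H,B)=2((H,B),H)+((H,H),B)=((H,H),B)$ in $\z$, which is exactly your partition argument in coordinates. Your supplementary multiset observation for cross-case injectivity is correct but in fact unnecessary, since as you note at the outset, condition (i) already follows formally from (ii).
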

\begin{proof}
As in the proof of Proposition~\ref{prop:oddprime}, we argue that terms in $\partial\Delta_0'(J)$, other than $J$ itself, have increased Hall order.
For ${\rm Hall}'$ problems which are not symmetric Hall problems, we have already argued this in the proof of Lemma~\ref{lem:delta0gradient}. For a symmetric Hall problem we contract $((H,H),B)$ to $(H,H,B)$. Notice that $\partial(H,H,B)=2((H,B),H)+((H,H),B)=(H,H,B)$. 
Thus the only nonzero term in $\partial J$, where $J$ is the tree containing $(H,H,B)$, is the original tree. Thus there are no gradient paths involving the vector $((H,H),B)\mapsto (H,H,B)$.
\end{proof}

\begin{defn}
A \emph{Hall}$'_1$ tree is defined to be a non-zero tree in the image of $\Delta'_0$.
\end{defn}

\begin{lem}\label{lem:char}
A tree $H$ is $\text{\rm Hall}^\prime_1$ if and only if both of the following two conditions hold:
\begin{enumerate}
\item $H$ either
\begin{enumerate} 
\item contains a full subtree $(A,B,C)$, where $A\prec B\prec C$ are all Hall, such that either $|A|=1$ or $A=(A',A'')$ with $A'\prec A''$ and $A''\succeq B$;
\item or it contains a full subtree $(A,A,B)$ where $A$ is Hall.
\end{enumerate}
\item Let $H^e$ be the tree where the full subtree $(A,B,C)$ is expanded to $((A,B),C)$ in case (a) above, or where the full subtree $(A,A,B)$ is expanded to $((A,A),B)$ in case (b). Then $H$ is the largest tree among all contractions of ${\rm Hall}'$ problems in $H^e$.
\end{enumerate}
\end{lem}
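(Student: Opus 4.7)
The plan is to prove both directions of the biconditional by directly unpacking the definition of $\Delta'_0$ and matching each type of Hall$'$ problem in a degree-$0$ tree with the structure of the $4$-valent vertex it produces upon contraction. The argument is purely combinatorial and splits according to whether the Hall$'$ problem being contracted is a standard Hall problem $((H',H''),H_2)$ or a symmetric Hall problem $(H,H)$.

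For the forward direction, suppose $H = \Delta'_0(J)$, so $J$ is not Hall$'$ and $H$ is the maximal contraction of a Hall$'$ problem in $J$. If the maximal Hall$'$ problem is a standard Hall problem $((H',H''),H_2)$, then relabeling $A = H'$, $B = H''$, $C = H_2$ produces a full subtree $(A,B,C)$ in $H$ with $A \prec B \prec C$ all Hall, and the structural constraint that $(H',H'')$ be Hall translates exactly into the condition on $A$ in (i)(a). If instead the maximal problem is a symmetric Hall problem $(A,A)$ with $A$ Hall, sitting inside $((A,A),B)$, the contraction yields a full subtree $(A,A,B)$ matching (i)(b). Taking $H^e = J$ gives the specific expansion named in (ii), and (ii) itself is a direct restatement of $H = \Delta'_0(J)$.

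For the reverse direction, given (i) and (ii), I would form $H^e$ as specified and verify that it has a Hall$'$ problem whose maximal contraction equals $H$, which by (ii) is exactly what needs to be checked. In case (a), the subtree $((A,B),C)$ is a Hall problem: $(A,B)$ is Hall by the condition on $A$ in (i)(a), and $(A,B) \prec C$ follows from the weight inequality $|(A,B)| = |A| + |B| > |B| \geq |C|$, using that $B \prec C$ are both Hall. This is moreover a Hall$'$ problem since $(A,B) \neq C$ rules out the excluded situation $H^e = ((A,B),(A,B))$. In case (b), the subtree $(A,A)$ is a symmetric Hall problem, and qualifies as Hall$'$ because $H^e$ strictly contains $((A,A),B)$, so $H^e \neq (A,A)$. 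In either case, $\Delta'_0(H^e)$ is defined and equals $H$ by (ii).

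The only substantive point that is not pure bookkeeping is the weight inequality $(A,B) \prec C$, which is needed to identify $((A,B),C)$ as a Hall problem but is not among the stated hypotheses; it follows from $B \prec C$ together with Hall-ness of $B$ (which forces $|B| \geq |C|$), but deserves explicit attention. Everything else is a careful translation between the structural data of (i) and the Hall-ness conditions on the constituent subtrees, with no conceptual obstacle beyond keeping the two parallel cases (standard versus symmetric) straight.
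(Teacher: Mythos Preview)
Your proof is correct and follows the same approach as the paper's, which simply states that condition (i) characterizes being the contraction of a Hall$'$ problem and condition (ii) ensures maximality. Your explicit verification that $((A,B),C)$ is a Hall problem---in particular the weight argument giving $(A,B)\prec C$---fills in a detail the paper leaves implicit; note however that $|B|\geq |C|$ follows directly from $B\prec C$ and the definition of the Hall order (Definition~\ref{def:order}(ii)), with Hall-ness of $B$ playing no role.
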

\begin{proof}
The first condition characterizes being the contraction of a ${\rm Hall}'$ problem and the second makes sure that it is a maximal contraction. Because all trees are nonzero in the complex $\z\otimes\mathbb L_{\bullet}$, there is no longer the subtlety that the expanded tree could be zero (as was the case in Lemma~\ref{lem:Hall1}).
\end{proof}
\begin{defn}\label{def:char}
Negating the  characterization from Lemma~\ref{lem:char}, we define a \emph{Hall}$'_1$ \emph{problem} in a tree $J\in\z\otimes\mathbb L_{1}$  to be any one of the following situations. Assume $J$ contains the full subtree $(A,B,C)$ with $A\preceq B\preceq C$.
\begin{enumerate}
\item One of the trees $A,B,C$ has a ${\rm Hall}$ problem:
\begin{enumerate}
\item $A=B$ and $A$ is not Hall.
\item $A\prec B=C$ and $B$ is not Hall.
\item $A\prec B\prec C$ and one of $A,B,C$ is not Hall.
\end{enumerate}
 \item $A,B,C$ are Hall with $A\prec B\prec C$ but $A=(A',A'')$ with $A'\prec A''\prec B$.
 \item $A=B$ are Hall or $B=C$ are Hall or $A\prec B\prec C$ with $|A|=1$ or $A''\succeq B$, but this is not the maximal contraction of a ${\rm Hall}'$ problem in the expanded tree.
\end{enumerate}
\end{defn}

\begin{defn}\label{def:Delta1prime}
Define $\Delta'_1\colon\z\otimes\mathbb L_{1}\to\z\otimes\mathbb L_{2}$ to vanish on $\textrm{Hall}'_1$ trees. Otherwise, referring to  Definition~\ref{def:char},
$\Delta'_1$ is defined for each case as follows: 
\begin{enumerate}
\item \label{def-item:exceptional}
\begin{enumerate}
\item Define $\Delta'_1$ to contract some ${\rm Hall}$ problem in $A$.
\item  Define $\Delta'_1$ to contract some ${\rm Hall}$ problem in $B$.
\item Define $\Delta'_1$ to contract some ${\rm Hall}$ problem in $A$, $B$, or $C$.
\end{enumerate}
Each of these three subcases includes the possibility that the contraction of the ${\rm Hall}$ problem contracts the root edge of $A$, $B$ or $C$. For example
$A$ might be a tree of the form $(H,H)$, and $\Delta'_1$ is then defined by contracting the root edge of $A$ to give a full subtree of the form $(H,H,B,C)$.
\item As in Definition~\ref{def:Delta1}, define $\Delta'_1$ to contract the root edge of $A$. 
\item As in Definition~\ref{def:Delta1}, define $\Delta'_1$ to contract the other, maximal, ${\rm Hall}'$ problem.
\end{enumerate}
\end{defn}

\begin{lem}\label{lem:deg1grad}
$\Delta'_1\colon\z\otimes\mathbb L_{1}\to\z\otimes\mathbb L_{2}$ is a gradient vector field.
\end{lem}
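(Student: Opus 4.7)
The plan is to mirror the verification for $\Delta_1$ in the proof of Proposition~\ref{prop:oddprime}: we show that $\Delta'_1$ is a valid vector field, that $\partial\Delta'_1(J)=J+(\text{strictly Hall-larger terms})$ in $\z$, and that the Hall order $\prec$ strictly increases along gradient paths. Concretely, for each type of ${\rm Hall}'_1$ problem listed in Definition~\ref{def:char} we will compute $\partial\Delta'_1(J)$ and verify that every summand other than $J$ itself has strictly greater Hall order, using Lemma~\ref{subtree} and the lexicographic definition of $\prec$; pairs of terms coming from genuine tree symmetries cancel modulo~$2$.

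Cases~(2) and~(3), and subcase~(1c), of Definition~\ref{def:char} are essentially identical to their counterparts in the $\zh$-coefficients argument, since that analysis uses only Lemma~\ref{subtree} and is insensitive to the coefficient ring. Case~(iv) of the $\zh$ proof drops out here because every unoriented tree is nonzero in $\z\otimes\mathbb L_{\bullet}$, so no factor of $1/2$ is needed; what replaces it is the need to treat the new ${\rm Hall}'$ trees of the form $(H,H)$. This new work is concentrated in subcases~(1a) and~(1b), where $A=B$ or $B=C$ and the contracted Hall problem is allowed to be a symmetric one, namely the root edge of some $(H,H)$ subtree. Contracting produces a vertex of valence $5$ at the top of the $(A,B,C)$ subtree; on expanding, half of the ten resulting summands pair up under the tautological $H\leftrightarrow H$ swap into coefficient-$2$ terms that vanish in $\z$, leaving $J$ together with a short explicit list of new summands such as $(B,(C,H,H))$, $(C,(B,H,H))$, and $(H,H,(B,C))$, each of which has strictly larger Hall order than $J$ by Lemma~\ref{subtree}.

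The main obstacle will be one exceptional configuration---the ``one case'' alluded to in the overview---in which a leftover summand ties $J$ in Hall order, specifically when the complementary half of a $5$-valent expansion recreates a symmetric pair $(H,H)$ in a new position. To rule out closed gradient paths through this transition, the plan is to refine the monotone quantity to a lexicographic pair consisting of the Hall order together with a secondary invariant counting the weights of the symmetric full subtrees $(H,H)$ present, chosen so that the exceptional move strictly decreases the secondary invariant while leaving the Hall order fixed. Since every other move strictly increases the primary Hall order, the refined invariant will strictly increase along every gradient path, excluding closed loops and completing the proof.
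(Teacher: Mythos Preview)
Your plan follows the paper through the non-exceptional cases, but the treatment of the exceptional configuration has a genuine gap. The exceptional case is precisely the one where one of $A,B,C$ equals $(H,H)$ for Hall $H$ and $\Delta'_1$ contracts its root edge, producing the full subtree $(H,H,B,C)$; this can occur in subcase~(1c) just as well as in (1a) or (1b), so your claim that (1c) is ``essentially identical'' to the $\zh$ argument already misses part of the new content. More seriously, your assertion that the exceptional transition leaves the Hall order \emph{fixed} is false: it can strictly \emph{decrease}. For instance, with $H=X_2$, $B=X_1$, $C=X_3$ one has
\[
(H,H,(B,C))=(X_2,X_2,(X_1,X_3))\ \prec\ ((X_2,X_2),X_1,X_3)=((H,H),B,C),
\]
since in the lexicographic comparison of the sorted triples the smallest child on the left is $(X_1,X_3)\prec(X_2,X_2)$. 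Thus a lexicographic pair with Hall order as the primary key cannot be monotone along gradient paths, and the proposed secondary count of $(H,H)$-subtrees does not repair this.

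The paper's argument for the exceptional case is structurally different from a refined monotone invariant. It shows directly (Lemma~\ref{lem:merit}) that \emph{every} descendant of the tree containing $((H,H),B,C)$ is either already in the image of $\Delta'_0$ (so the path stops) or contains a full subtree of the form $(H,H,A)$ at its $4$-valent vertex; the proof is an induction tracking this $(H,H,\,\cdot\,)$ pattern through the possible contractions and re-expansions. Since $((H,H),B,C)$ itself is \emph{not} of this form (its $4$-valent vertex has children $(H,H),B,C$, not two bare copies of $H$) unless $B=C=H$, which is excluded, no gradient path can return to the starting tree. To complete your proof you would need either to reproduce this descendant-tracking argument or to replace your lexicographic refinement by one whose primary key genuinely does not decrease under the exceptional move.
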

This will complete the proof of Proposition~\ref{prop:2tor}, as we have constructed a gradient vector field $\Delta'_0\cup\Delta'_1$ on $\z\otimes\mathbb L_{\bullet}$ with  no critical basis elements in degree $1$. 

\subsection{Proof of Lemma~\ref{lem:deg1grad}}
We need to verify three things as in the proof of Lemma~\ref{lem:delta0gradient}: 
\begin{enumerate}
\item  $\Delta'_1(J_1)\neq \Delta'_1(J_2)$ for distinct $J_1,J_2$ that are not ${\rm Hall}_1'$.
\item $\partial\Delta'_1(J)=J+\text{ other trees}$, for $J$ not ${\rm Hall}_1'$.
\item All gradient paths terminate.
\end{enumerate}

We proceed similarly to that proof by arguing that in all but one case, the trees in the sum $\partial\Delta'(J)-J$ all have larger Hall order than $J$. This exceptional case is dealt with by supplemental arguments that verify the three conditions. 

For type (i) ${\rm Hall}'_1$ problems, we are contracting a ${\rm Hall}'$ problem ``above" the $4$-valent vertex. If this problem is a ${\rm Hall}$ problem, we have already done the required analysis in the $\zh$-coefficients proof to show that the other expansions have increased Hall order.  If it is a contraction of an $(H,H)$ the same analysis applies with the exception of the case singled out in item~(\ref{def-item:exceptional}) of the definition of $\Delta'_1$, namely that the full subtree  $((H,H),B,C)$  contracts to $(H,H,B,C)$. For type (ii) and (iii) ${\rm Hall}'_1$ problems, the $\zh$-coefficient analysis remains valid, and the other expansions have increased Hall order.

Now that we know that the Hall order increases along gradient flows in all but the case where the full subtree  
$((H,H),B,C)$ contracts to $(H,H,B,C)$, we proceed to verify conditions (i), (ii) and (iii) for this exceptional case. First we check that condition (i) holds. By the proof of Lemma~\ref{lem:delta0gradient}, the only potential difficulty is when at least one of $J_1$ and $J_2$  is in the exceptional case. Say $J_1$ contains the full subtree $((H,H),B,C)$, which contracts to $(H,H,B,C)$. There are  four nonzero terms in $\partial\Delta'_1(J_1)$: $J_1$, and three trees containing the full subtrees $(H,H,(B,C))$, $((H,H,B),C)$ and $((H,H,C),B)$. With the exception of $J_1$, none of these trees contains a ${\rm Hall}'_1$ problem that contracts to $(H,H,B,C)$. Thus $(H,H,B,C)$ is the image of a unique tree under $\Delta'_1$. 

Condition (ii) also needs only to be checked in the exceptional case, as it automatically follows in the other cases from the fact that the other terms of $\partial\Delta'_1(J)$ have increased Hall order. So we must
verify that the nonzero terms of $\partial(H,H,B,C)$ contain only one copy of $((H,H),B,C)$. The only other term that could be equal to $((H,H),B,C)$  is $(H,H,(B,C))$, which would imply that $B=C=H$. In this case $((H,H),H,H)$ is the contraction of a ${\rm Hall}'$ problem in $((H,H),(H,H))$, and $\Delta'_1$ is either $0$, if the tree containing $((H,H),(H,H))$ is in the image of $\Delta'_0$, or $\Delta'_1$ contracts some other Hall problem elsewhere in the tree. So the hypothesis that
$\Delta'_1$ contracts $((H,H),B,C)$ to $(H,H,B,C)$ is not satisfied.

Finally, we tackle (iii). We will show that any gradient path that starts in the exceptional case can never return to its starting point. This implies that a closed gradient path will not contain these exceptional cases, and then the fact that the Hall order increases along gradient paths away from the exceptional cases implies there are no closed paths. 

Define a \emph{descendant} of a tree $T$, to be a tree $S$ such that there is a gradient path starting at $T$ and ending at $S$.
 
\begin{lem}\label{lem:merit}
Suppose $\Delta'_1$ contracts the full subtree $((H,H),B,C)$ to $(H,H,B,C)$. Every descendant of the tree containing $((H,H),B,C)$ is either in the image of $\Delta'_0$ or contains a full subtree of the form $(H,H,A)$ for some tree $A$. 
\end{lem}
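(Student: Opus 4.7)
The plan is to prove this by induction on the length of a gradient path starting at $T$, taking as the inductive invariant the conclusion of the lemma itself.

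The base case follows from the verification of condition (ii) in the preceding paragraph: modulo $2$, the boundary $\partial\Delta'_1(T)$ consists of $T$ together with three trees containing the full subtrees $(H,H,(B,C))$, $((H,H,B),C)$, and $((H,H,C),B)$, each of which is of the form $(H,H,A)$ with $A=(B,C)$, $A=B$, or $A=C$ respectively.

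For the inductive step, suppose $S$ is a descendant containing a full subtree $(H,H,A)$ at its unique $4$-valent vertex $v$. If $S$ lies in the image of $\Delta'_0$ (equivalently, $S$ is $\text{Hall}'_1$), the gradient path terminates and the conclusion holds. Otherwise $S$ has a $\text{Hall}'_1$ problem, which must be centered at $v$ since $v$ is the only $4$-valent vertex of $S$. Using that $H$ is Hall, inspection of Definition~\ref{def:char} shows that the types (i) and (ii) cannot apply, so the problem must be of type (iii). Consequently $\Delta'_1(S)$ contracts the maximal $\text{Hall}'$ problem in the expanded tree $S^e$, producing a degree-$2$ element in which $v$ still carries $(H,H,A')$ (where $A'$ differs from $A$ by a local contraction) and a new $4$-valent vertex $w$ appears at the site of that contraction.

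The key computation is then the analysis of $\partial\Delta'_1(S)$. Expansions at $w$ leave $v$ untouched and produce either $S$ itself (from the partition reversing the contraction) or trees still containing $(H,H,A'')$ for a modified $A''$, so these terms feed the induction. The delicate case is the expansion at $v$: the central mod-$2$ observation used throughout this section is that partitions of the four half-edges $\{*,H,H,A'\}$ which separate the two $H$'s yield identical oriented trees and hence appear with coefficient $2$, vanishing modulo $2$; only the partition $\{*,A'\}\mid\{H,H\}$ survives, producing a single term $S'$ containing the full subtree $((H,H),A')$. One then identifies $S'$ with $\Delta'_0(S^e)$: by the type-(iii) definition of $\Delta'_1$, the tree $S'$ is obtained from $S^e$ by contracting only the maximal $\text{Hall}'$ problem and leaving the symmetric $(H,H)$ intact, which is exactly the action of $\Delta'_0$ on $S^e$. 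Hence $S'$ is $\text{Hall}'_1$, and any gradient path passing through $S'$ terminates.

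The main obstacle is the careful identification $S'=\Delta'_0(S^e)$ combined with the mod-$2$ cancellation at $v$; a parallel case arises when the maximal $\text{Hall}'$ problem chosen by $\Delta'_1$ lies in the portion of $S$ above $v$ rather than inside $A$, but there the $(H,H,A)$ subtree is preserved intact by $\Delta'_1$, and the same mod-$2$ analysis of the expansion at $v$ again places the only potentially problematic boundary term in the image of $\Delta'_0$.
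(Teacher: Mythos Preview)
Your approach is the same as the paper's: induct on the length of the gradient path, check that each non-backtracking term of $\partial\Delta'_1(S)$ either retains an $(H,H,\cdot)$ subtree or equals $\Delta'_0(S^e)$. The base case and the identification $S'=\Delta'_0(S^e)$ are handled correctly.

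There is, however, a genuine missing case in your inductive step. After establishing that the $\text{Hall}'_1$ problem in $S$ must be of type~(iii), you assert that $\Delta'_1(S)$ produces a degree-$2$ tree in which $v$ still carries $(H,H,A')$ and a \emph{separate} $4$-valent vertex $w$ appears. This covers the situations where the maximal $\text{Hall}'$ problem of $S^e$ lies strictly inside $A$ or outside the $(H,H,A)$ subtree, but it omits the case where $A=(K,K)$ for some Hall tree $K$ and the maximal $\text{Hall}'$ problem is the symmetric one at the root of $A$. In that case $\Delta'_1(S)$ contracts the root edge of $A$, merging it with $v$ to produce a single $5$-valent vertex carrying $(H,H,K,K)$; there is no separate $w$, and your partition analysis of the four half-edges $\{*,H,H,A'\}$ at $v$ does not apply.

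The paper treats this case explicitly. Expanding the $5$-valent vertex $(H,H,K,K)$ and cancelling mod~$2$ the pairs separating the two $H$'s or the two $K$'s, one finds that the only non-backtracking survivor is the tree containing $((H,H),K,K)$, which again equals $\Delta'_0(S^e)$ by the same reasoning you use. So the fix is short, but as written your argument has a gap.
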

\begin{cor}\label{cor:gradient}
Any gradient path starting with the tree containing $((H,H),B,C)$ never returns to $((H,H),B,C)$.
\end{cor}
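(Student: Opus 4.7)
The plan is to deduce Corollary~\ref{cor:gradient} by contradiction directly from Lemma~\ref{lem:merit}. Suppose a gradient path starting at $T$ (the tree containing $((H,H),B,C)$) were to return to $T$; then $T$ would appear as a descendant of itself along a nontrivial gradient path, and Lemma~\ref{lem:merit} would force $T$ either to lie in the image of $\Delta'_0$ or to contain a full subtree of the form $(H,H,A)$ for some tree $A$. I would rule out both alternatives for $T$.

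Ruling out the first alternative is immediate: since $\Delta'_1$ acts nontrivially on $T$ (contracting the chosen $((H,H),B,C)$ to $(H,H,B,C)$), the basis element $T$ is already in a vector of $\Delta'_1$, so the disjointness clause in the definition of a vector field forbids $T$ from also belonging to a vector of $\Delta'_0$.

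The main obstacle — and the genuinely combinatorial step — is ruling out the second alternative. Here I would exploit that $T \in \z \otimes \mathbb L_{1,n}$ has homological degree $1$, so $T$ possesses a unique non-trivalent internal vertex, of valence exactly $4$. The full subtree $((H,H),B,C)$ already contributes such a $4$-valent vertex (its center, joining $(H,H)$, $B$, $C$ to the root edge), so this must be the only vertex of valence $\neq 3$ in $T$. Any hypothetical full subtree of the form $(H,H,A)$ would necessarily share this same central $4$-valent vertex, which forces the multiset of its three upward branches to agree with $\{(H,H), B, C\}$. A weight comparison shows $(H,H) \neq H$ (their leaf counts differ), so no such matching is possible, and $T$ contains no subtree of the forbidden form. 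Together with the first step, this produces the required contradiction and completes the proof.
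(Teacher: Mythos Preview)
Your overall strategy matches the paper's, and your first step (ruling out $T$ being in the image of $\Delta'_0$ via the disjointness of $\Delta'_0$ and $\Delta'_1$) is fine. The gap is in your second step. From the equality of multisets $\{H,H,A\}=\{(H,H),B,C\}$ you correctly observe that $(H,H)\neq H$ by weight, but this does \emph{not} make the matching impossible: it only forces $A=(H,H)$ and hence $\{B,C\}=\{H,H\}$, i.e.\ $B=C=H$. So the case $B=C=H$ survives your argument and must be excluded separately.

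This is exactly the case the paper singles out. It is ruled out not by combinatorics at the $4$-valent vertex but by showing that the hypothesis of the corollary fails: if $B=C=H$, the full subtree is $((H,H),H,H)=(H,H,(H,H))$, which is the contraction of the symmetric ${\rm Hall}'$ problem $(H,H)$ inside $((H,H),(H,H))$. Hence the tree containing it either lies in the image of $\Delta'_0$ (if this contraction is maximal) or has $\Delta'_1$ contracting a different, maximal ${\rm Hall}'$ problem elsewhere; in neither case does $\Delta'_1$ send $((H,H),B,C)$ to $(H,H,B,C)$. You need to insert this argument (already appearing in the verification of condition~(ii) in the proof of Lemma~\ref{lem:deg1grad}) to close the gap.
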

\begin{proof}[Proof of Corollary~\ref{cor:gradient}]
This obviously follows from Lemma~\ref{lem:merit} unless $B=C=H$. However we argued just above in the proof of Lemma~\ref{lem:deg1grad} that this case violates the hypothesis that $\Delta'_1$ contracts $((H,H),B,C)$ to $(H,H,B,C)$.
\end{proof}
\begin{proof}[Proof of Lemma~\ref{lem:merit}]
First, consider descendants which are connected by a length $2$ gradient path, i.e., descendants which are terms of $\partial\Delta-\id$ applied to the tree containing $((H,H),B,C)$. There are three such descendants, and they contain trees of the following forms:
$$(H,H,(B,C)),((H,H,B),C),((H,H,C),B)$$ 
All three contain a subtree of the appropriate type. So now assume inductively that $J$ is a $k$th descendant containing a full subtree of the form $(H,H,A)$ and consider what trees are connected to $J$ by a length $2$ gradient path starting at $J$. Since $(H,H,A)$ is the contraction of a ${\rm Hall}'$ problem, $\Delta'_1(J)$ will either be $0$, if it was the maximal contraction, or it will contract a different ${\rm Hall}'$ problem which gives the maximal contraction, possibly in the tree $A$. 
So $\Delta'_1(J)$ contains one of the following subtrees, depending on the nature of the contracted ${\rm Hall}'$ problem:
\begin{enumerate} 
\item $(H,H,A)$ if the ${\rm Hall}'$ problem giving the maximal contraction is not in $A$.
\item $(H,H,A^c)$ if the ${\rm Hall}'$ problem giving the maximal contraction is in $A$ but this is not a contraction of the root. Here $A^c$ represents the contraction of this ${\rm Hall}'$ problem in $A$.
\item $(H,H,K,K)$ if $A=(K,K)$ and $\Delta'_1$ contracts the root of $A$.
\end{enumerate}
 Applying $\partial$ without backtracking will always yield a subtree $(H,H,B)$ except for the trees containing, respectively, $((H,H),A)$, $((H,H),A^c)$ and $((H,H),K,K)$. However these trees are all in the image of $\Delta'_0$ as they are each by definition the tree with the maximal contraction of a ${\rm Hall}'$ problem.
\end{proof}

%================================================================

\section{Proof of Theorem~\ref{lem:reducedT}}~\label{sec:proof}
Recall the statement of Theorem~\ref{lem:reducedT}: $\bbeta_\bullet$ induces an isomorphism
$\bar\eta\colon{\cT}_n\overset{\cong}{\longrightarrow} H_1(\overline{\mathbb L}_{\bullet,n+2}).$ Recalling that ${\cT}_n$ is isomorphic to $H_0(\mathbb T_{\bullet,n+2};\Z)$ (Proposition~\ref{prop:isomorphisms}),
we will construct a gradient vector field $\Delta$ on the chain complex $\overline{\mathbb L}_{\bullet,n+2}$,
and then show that the resulting chain map $\phi^\Delta\circ \bbeta$ induces an isomorphism 
$$\xymatrix{
H_0(\mathbb T_{\bullet,n+2};\Z)\ar@{->}[r]^{\bar\eta}\ar@/_2pc/[rr]^{\cong}&H_1(\overline{\mathbb L}_{\bullet,n+2};\Z)\ar[r]^{\phi^{\Delta}}_\cong&H_1(\overline{\mathbb L}^\Delta_{\bullet,n+2};\Z)
}$$
implying $\bar\eta$ is an isomorphism as desired. (Where $\phi^\Delta$ is an isomorphism by Theorem~\ref{thm:morse-nonfree}.)

To define $\Delta$ we begin by noting that via the operation of ``removing the rooted edge'' each tree 
$J\in\overline{\mathbb L}_{\bullet,n+2}$ defines a unique tree $t\in\mathbb T_{\bullet,n+2}$ which we call the \emph{underlying tree} of $J$. Specifically, if the root of $J$ is adjacent to an internal vertex of valence greater than $3$, then $t$ is gotten by deleting the root and (the interior of) its edge but leaving the internal vertex. If the root of $J$ is adjacent to a trivalent vertex, then $t$ is gotten by deleting the root and (the interior of) its edge but converting the resulting $2$-valent vertex into a non-vertex point of $t$. This operation in the case where the root is adjacent to a trivalent vertex is succinctly described using the notion of the \emph{inner product} 
$\langle J_1,J_2 \rangle$ of two rooted trees $J_1$ and $J_2$, which is the unrooted tree defined by identifying the roots of $J_1$ and $J_2$ to a single non-vertex point. (The orientation of $\langle J_1,J_2 \rangle$ is given by numbering the middle edge first, followed by the edges of $J_1$ and then the edges of $J_2$ in the orderings prescribed by their orientations.)  Then $\langle J_1,J_2 \rangle$ is the underlying tree of the bracket $(J_1,J_2)$.

Next choose a fixed basepoint for every isomorphism class of tree $t\in\mathbb T_{\bullet,n+2}$ in the following way. If an internal vertex is fixed by the symmetry group of $t$, choose the basepoint to be such a vertex.
Otherwise, by the following lemma, the tree is of the form $t=\langle T, T\rangle$ for some tree $T$. 
In this case choose the basepoint of $t$ to be the midpoint of the middle edge joining the two copies of $T$.
\begin{lem}
Every $t\in\mathbb T_{\bullet,n+2}$ is either of the form $\langle T, T\rangle$ or it has an internal vertex fixed by its symmetry group.
\end{lem}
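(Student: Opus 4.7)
The plan is to invoke the classical \emph{center of a tree} theorem and then case-analyze how the symmetry group $G$ of $t$ acts on that center. Recall that for any finite tree, the set of vertices of minimum eccentricity (equivalently, the set obtained by iteratively pruning all leaves until at most two vertices remain) consists of either a single vertex or two vertices joined by an edge; moreover this center is characterized by purely combinatorial data (distances in $t$), so it is preserved by every label-preserving automorphism in $G$. Since $n \geq 1$ forces $t$ to have at least one internal vertex, the leaf-pruning description guarantees that the center is either an internal vertex of $t$ or an edge both of whose endpoints are internal.

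First I would handle the easy case where the center is a single vertex $v$: then $v$ is an internal vertex fixed by all of $G$, so $t$ falls into the ``fixed internal vertex'' alternative of the lemma and we are done. Next, suppose the center is an edge $e$ with internal endpoints $v_1, v_2$. The action of $G$ permutes $\{v_1, v_2\}$, giving two subcases. If $G$ fixes $v_1$ and $v_2$ individually, then again we have an internal vertex fixed by $G$. Otherwise, some $\sigma \in G$ swaps $v_1$ and $v_2$.

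In that last subcase, deleting the interior of $e$ separates $t$ into two components $C_1 \ni v_1$ and $C_2 \ni v_2$, and $\sigma$ restricts to a label-preserving isomorphism $C_1 \to C_2$ of labeled rooted trees (rooted at $v_1$ and $v_2$ respectively). Forming the rooted tree $T$ by taking $C_1$ and adjoining a new root vertex via a fresh root edge attached at $v_1$ (this new edge playing the role of the half of $e$ on the $v_1$ side), we then identify $t$ with $\langle T, T \rangle$, since gluing two copies of $T$ along their roots to a non-vertex midpoint exactly reproduces the edge $e$ and its two symmetric halves $C_1, C_2$.

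The main thing to be careful about is the degenerate possibility that the center of $t$ is a leaf rather than an internal vertex or internal edge; this only occurs when $t$ has no internal vertex at all (a single edge connecting two leaves), which is excluded once $n \geq 1$. Beyond this, the argument is essentially an application of the center theorem combined with bookkeeping of rooted labels; no delicate case analysis is needed beyond the three subcases above, and the identification $t \cong \langle T, T \rangle$ takes place at the level of isomorphism classes in $\mathbb T_{\bullet,n+2}$, which is where the lemma lives.
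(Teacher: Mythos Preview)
Your proof is correct and follows essentially the same approach as the paper's. The paper invokes the ``barycenter'' of $t$ (the midpoint of a maximal geodesic), which is the same invariant as your Jordan center, and argues just as you do: the symmetry group fixes it, and if it lies in the middle of an edge whose endpoints are swapped by some automorphism then $t=\langle T,T\rangle$. Your write-up is more explicit (in particular about why the center must be internal and how $T$ is constructed), but the underlying idea is identical.
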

\begin{proof}
The barycenter of a tree $t$ is defined as the midpoint of a maximal geodesic, and is uniquely defined. 
Note that the symmetry group of the tree fixes this barycenter. If the barycenter is in the middle of an edge, and one endpoint of that edge is not fixed by the symmetry group of the tree, then the tree is of the form  $t=\langle T, T\rangle$ for some tree $T$. 
\end{proof}

Now define $\Delta$ as follows: If the root of $J$ is adjacent to an internal vertex of valence greater than $3$, then $\Delta(J)=0$. If $J=(T,T)$ then we also define $\Delta(J)=0$. Otherwise, $\Delta(J)$ is defined by ``sliding'' the rooted edge of $J$ away from the 
basepoint of the underlying tree until it attaches to the next internal vertex:
$$
\xymatrix{
\begin{minipage}{1in}
\includegraphics[width=1in]{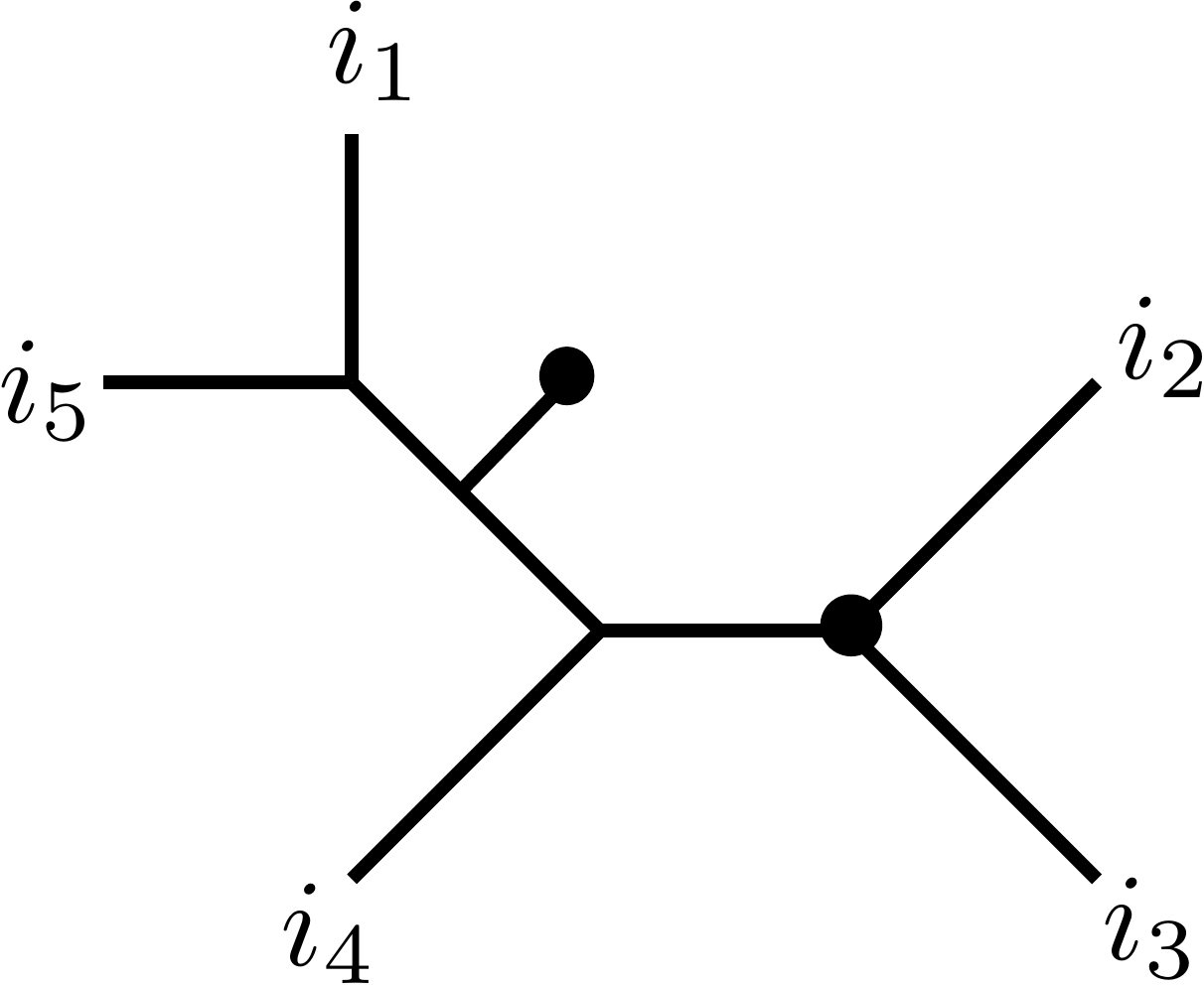}
\end{minipage}\ar@{|->}[r]^-{\Delta}&\begin{minipage}{1in}
\includegraphics[width=1in]{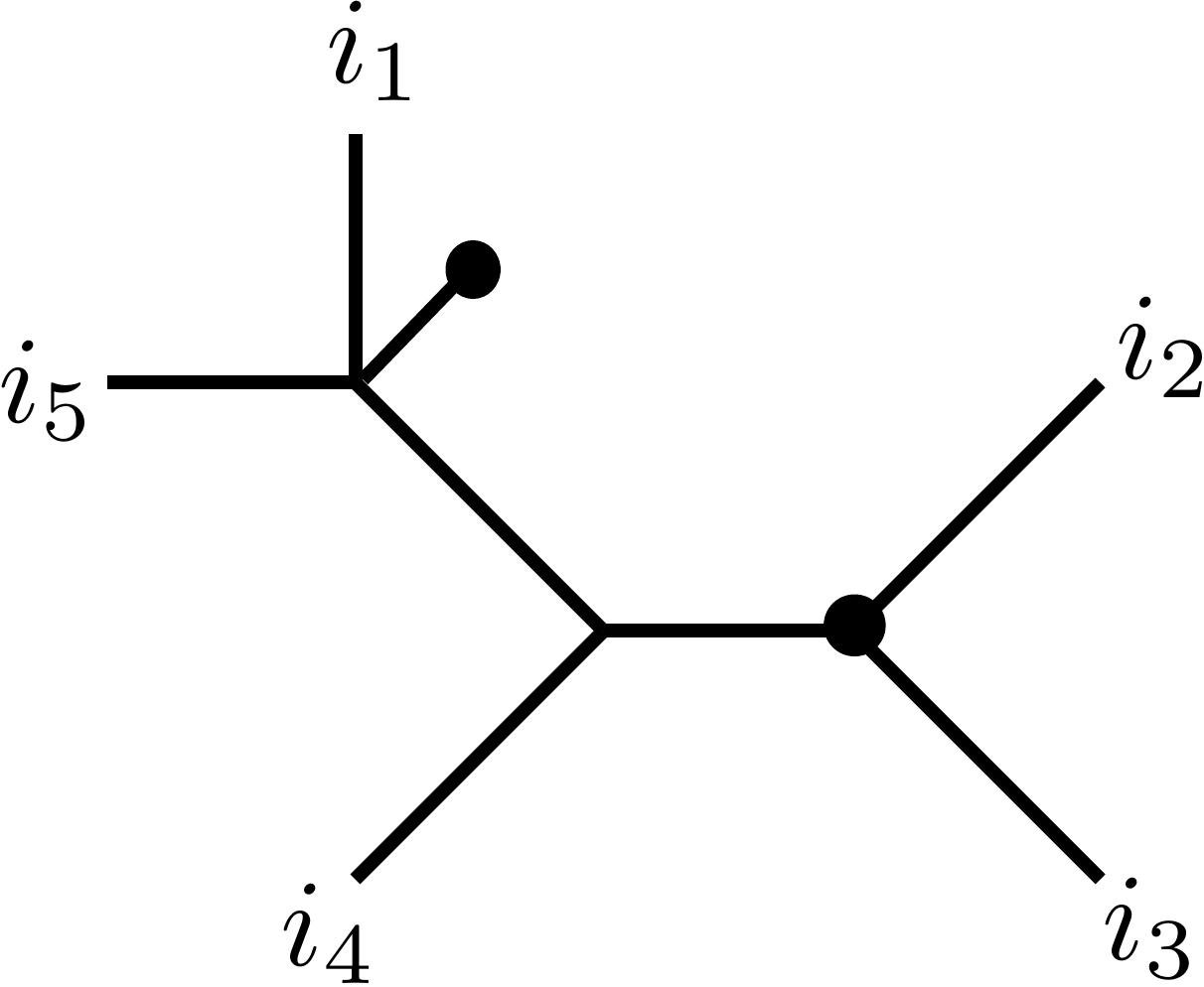}
\end{minipage}}
$$

 \begin{lem}
$\Delta\colon\overline{{\mathbb L}}_{\bullet,n+2}\to \overline{{\mathbb L}}_{\bullet+1,n+2}$ is a well-defined gradient vector field.
\end{lem}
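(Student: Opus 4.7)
The plan is to verify the three conditions defining a gradient vector field: that $\Delta$ is well-defined on generators; that each pair $(J, \Delta(J))$ with $\Delta(J) \ne 0$ is a vector and each generator appears in at most one such pair; and that there are no closed gradient paths.

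For well-definedness, the only potential ambiguity is in the choice of the ``away from basepoint'' direction, so this must be handled case by case. When the root of $J$ is adjacent to a trivalent vertex $v$, the point in the underlying tree $t$ obtained by collapsing $v$ lies in the interior of some edge $e$ of $t$. If $e$ does not contain the basepoint, then exactly one endpoint of $e$ is farther from the basepoint, yielding a unique ``away'' direction. If $e$ contains the basepoint (which by the preceding lemma can only happen when $t = \langle T,T\rangle$ and $e$ is the middle edge), then either the collapsed-$v$ point coincides with the basepoint, forcing $J = (T,T)$ (excluded by the definition), or it lies strictly on one side of the basepoint, again giving a unique ``away'' direction. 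Since the basepoint is fixed by all automorphisms of $t$ and automorphisms of $J$ fix the root position in $t$, the sliding operation descends unambiguously to a map on generators of $\overline{\mathbb L}_{\bullet,n+2}$. I also note that $\Delta(J)$ can never be equivalent to a tree of the form $(i,K)$, since its root edge attaches to a $4$-valent vertex, so $\Delta$ passes to the quotient.

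For the vector field condition, every nonzero $\Delta(J)$ has the root edge attached to a unique $4$-valent vertex $v'$, so $\Delta(\Delta(J)) = 0$ and the source and target of $\Delta$ are disjoint. In $\partial \Delta(J)$, the three expansions of $v'$ distribute its four edges into pairs of two. Exactly one expansion --- the one pairing the root edge with the edge at $v'$ pointing back toward the basepoint --- reverses the slide and reproduces $J$ with coefficient $\pm 1$, while the other two pair the root edge with an away-from-basepoint edge at $v'$. Distinct $J$ produce distinct $\Delta(J)$ because $J$ can be uniquely recovered from $\Delta(J)$ by this same reverse-slide operation (the basepoint direction from $v'$ is canonical), which verifies the vector field condition.

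The main obstacle will be the gradient condition, and the plan is to exhibit a strict monovariant. Give the underlying unrooted tree $t$ the metric-graph structure in which each edge has length one, and define $d(J)$ to be the distance in $t$ from the basepoint to the root position of $J$ (either the vertex of $t$ to which the root edge attaches when that vertex has valence $\geq 4$ in $J$, or the interior point of an edge of $t$ into which a trivalent root-adjacent vertex collapses, taken at its midpoint). A slide moves the root position from the interior of an edge to its away-from-basepoint endpoint, strictly increasing $d$ by $\tfrac{1}{2}$. A non-backtracking expansion of the $4$-valent vertex $v'$ of $\Delta(J)$ pairs the root edge with one of the two away-from-basepoint edges at $v'$, moving the root position from $v'$ into the interior of that edge and again strictly increasing $d$ by $\tfrac{1}{2}$. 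Therefore along any gradient path $a_1, \Delta(a_1), a_2, \Delta(a_2), \dots$ we obtain $d(a_{i+1}) = d(a_i) + 1$, so $d$ is strictly increasing and no closed gradient path can exist.
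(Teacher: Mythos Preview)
Your argument follows essentially the same strategy as the paper's: verify that sliding is unambiguous, that $J$ appears with unit coefficient in $\partial\Delta(J)$, and that the root-to-basepoint distance is a strict monovariant along gradient paths. That part is fine, modulo two small things you should say explicitly: (a) the vertex $v'$ you slide to is an \emph{internal} vertex of $t$ (this uses that trees $(i,K)$ are zero in $\overline{\mathbb L}$, so both subtrees at the trivalent root vertex have weight $\ge 2$); and (b) terms of $\partial\Delta(J)$ coming from expanding a vertex other than $v'$ still have the root at a vertex of valence $\ge 4$, hence lie outside the support of $\Delta$ and cannot continue a gradient path. With those remarks the monovariant argument is complete.

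There is, however, a genuine gap. The chain groups $\overline{\mathbb L}_{\bullet,n+2}$ are not free: generators are either $\Z$-summands or $\z$-summands depending on whether the tree has an orientation-reversing automorphism, and in this non-free setting the definition of a vector field (Section~3.1) requires that no vector $(\mathbf a,\mathbf b)$ mix a $\Z$-generator with a $\z$-generator. You never check this. The paper does so explicitly: if $J=(J_1,J_2)$ with $J_1\neq J_2$, any automorphism of $J$ fixes both edges at the trivalent root vertex and therefore survives the slide (so $\z$-generators go to $\z$-generators); conversely, a symmetry of $\Delta(J)$ induces an automorphism of the underlying tree $t$ fixing both $v'$ and the basepoint, hence fixing the edge between them, and so already gives a symmetry of $J$ (so $\Z$-generators go to $\Z$-generators). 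Without this verification, Theorem~\ref{thm:morse-nonfree} does not apply and the lemma is not established.
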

\begin{proof}
Note that $\Delta$ is well-defined as a map: Since there are no trees of the form $(i,T)$ in $\overline{\mathbb L}_{\bullet,n+2}$, there will always be a neighboring internal vertex to slide to as required. Also $\partial(\Delta(J))=\pm J$ plus other terms not equal to $J$ since other expansions of $\Delta(J)$ where the root is adjacent to a trivalent vertex have the root further away from the basepoint.

We must check that $\Delta$ does not map any $\Z$-trees to $\z$-trees, and does not map any $\z$-trees to $\Z$-trees.
We only need to check the cases where $J=(J_1,J_2)$ has the root adjacent to a trivalent vertex. Note that every symmetry of $J$ obviously fixes the root.  If $J$ has a symmetry which flips the two outgoing trees $J_1$ and $J_2$, then $J_1=J_2$ and $\Delta(J)$ was defined to be $0$. If $J_1\neq J_2$, then any symmetry of $J$ restricts to the identity on the edge of the underlying tree to which the root edge attaches, so when we slide the root along this edge to get 
$\Delta(J)$, the symmetry is still there. If there are no non-trivial symmetries of $J$, then sliding the root edge to the neighboring internal vertex can not create a symmetry because such a symmetry would not fix the basepoint on the underlying tree, contradicting our choice of basepoints.

Let us analyze what a gradient path is in this vector field. A non-vanishing application of $\Delta$ pushes the root away from the basepoint. Now the only terms of $\partial(\Delta(J))-J$ on which $\Delta$ will evaluate nontrivially are those where the root has been pushed by 
$\partial$ onto another edge of the underlying tree that is further away from the basepoint. Repeated applications of $\Delta$ moves the rooted edges in such terms further and further away, until they eventually reach univalent edges, where the trees are zero in 
$\overline{\mathbb L}_{\bullet,n+2}$. Thus there are no closed gradient paths and $\Delta$ is a gradient field.
\end{proof}
Next we analyze the critical generators. Given a tree $t\in\mathbb T_{\bullet,n+2}$, let $t^b$ denote the tree with a rooted edge attached to the basepoint. Also let $\langle T,T\rangle^s$ denote the tree where a rooted edge is attached to one of the endpoints of the central edge.
\begin{lem}
The Morse complex is generated by the following two  types of trees.
\begin{enumerate}
\item Trees of the form $t^b$, for $t\in\mathbb T_{\bullet,n+2}$
\item Trees of the form $\langle T, T\rangle^s$.
\end{enumerate}
\end{lem}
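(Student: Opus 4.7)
The approach will be to identify the critical generators of $\Delta$ directly from the definition. Recall that a generator $J$ is critical if and only if $\Delta(J)=0$ and $J$ does not lie in the image of $\Delta$. From the definition of $\Delta$, the first condition holds precisely when either the root of $J$ is adjacent to an internal vertex of valence at least $4$, or $J=(T,T)$ for some rooted tree $T$.

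Next I would analyze the image of $\Delta$. By construction, any tree in the image of $\Delta$ has its root attached to a vertex of valence at least $4$, since the slide merges a trivalent root-adjacent vertex with an adjacent internal vertex. Conversely, given $J$ with root attached at a vertex $w$ of valence $\geq 4$, there is a unique candidate $J'$ with $\Delta(J')=J$, obtained by ``un-sliding'': the root moves from $w$ to a non-vertex point on the unique non-root edge of $w$ in $J$ that corresponds to the edge of the underlying tree $t$ incident to $w$ and pointing toward the basepoint. This edge exists unless $w$ equals the basepoint (which requires the basepoint to be a vertex), in which case un-sliding is impossible. When the un-slide does exist, $\Delta(J')=J$ holds if and only if $J'$ is not of the form $(T',T')$, i.e., the midpoint where the new trivalent vertex sits is not a \emph{symmetric split} of $t$ (a non-vertex point whose removal yields two halves isomorphic as rooted trees).

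The crucial combinatorial input is that $t$ admits at most one symmetric split, and whenever one exists it coincides with the basepoint. Indeed, any symmetric split gives a swap automorphism of $t$ of order $2$ whose only fixed point is that midpoint; two distinct symmetric splits would produce two such swap automorphisms with disjoint fixed sets, contradicting the fact that any finite group acting on a finite tree must fix a point. Moreover, if the basepoint of $t$ is a vertex then that vertex is already fixed by the symmetry group, ruling out any swap (which would fix only a midpoint), so no symmetric splits can exist. When the basepoint is a midpoint, the previous lemma forces $t=\langle T,T\rangle$ and the unique symmetric split is the central midpoint.

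Combining these steps completes the case analysis. If the root of $J$ is at a trivalent vertex, criticality forces $J=(T,T)$, which coincides with $t^b$ for $t=\langle T,T\rangle$. If the root is at a vertex $w$ of valence $\geq 4$, criticality requires either $w$ to equal the basepoint, giving $J=t^b$, or the un-slide to produce $(T',T')$. The latter condition occurs exactly when the edge from $w$ toward the basepoint carries the unique symmetric split; by the preceding paragraph, this forces $t=\langle T,T\rangle$ with $w$ an endpoint of the central edge, so $J=\langle T,T\rangle^s$. These two cases exhaust all critical generators. The main obstacle is the uniqueness of symmetric splits via the fixed-point argument; the remainder of the proof is a careful but essentially routine case check.
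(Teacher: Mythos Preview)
Your proof is correct and follows essentially the same approach as the paper: both arguments identify critical generators by analyzing when $\Delta$ vanishes and when a tree lies in the image of $\Delta$ via the ``un-slide'' construction. Your treatment is in fact more careful than the paper's in two respects: you correctly note that $(T,T)$ is a critical generator with root at a trivalent vertex (and identify it as $t^b$ for $t=\langle T,T\rangle$), whereas the paper's first sentence glosses over this case; and you explicitly justify, via the fixed-point theorem for finite group actions on trees, why a symmetric split is unique and must coincide with the basepoint, which the paper leaves implicit in its basepoint convention.
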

\begin{proof}
If $J$ is a tree where the root edge is adjacent to a trivalent vertex, then $\Delta$ is nonzero, so $J$ is not critical. If the root attaches to a higher-valence vertex $v$ away from the basepoint of the underlying tree $t$, then consider the tree $J'$ where the root edge attaches to the middle of the edge of $t$ adjacent to $v$ that is closer to the basepoint. Then, in most cases, $\Delta(J')=J$. The one case that is ruled out is the case when $J'=(T,T)$, where $\Delta$ was defined as $0$. This explains why the tree $\langle T,T\rangle^s$ is critical. Finally, if the root edge attaches to the basepoint, $t$ is not in the image of $\Delta$, so it is critical.
\end{proof}

Recall now the chain map $\bbeta_\bullet\colon\mathbb T_\bullet\to\overline{\mathbb L}_{\bullet+1}$ from Lemma~\ref{lem:eta-chain-map} defined by summing over attaching a root to all internal vertices. We calculate $\phi^\Delta\circ\bbeta_\bullet$ as follows.
If $t$ is not of the form $\langle T,T\rangle$, then $\phi^\Delta(\bbeta(t))=t^b\in\overline{\mathbb L}^\Delta_{\bullet,n+2}$. This is because $\Delta$ vanishes on the tree summands of $\bbeta(t)$, so $\phi^\Delta(\bbeta(t))$ is the sum of critical generators in $\bbeta(t)$. On the other hand, suppose $t=\langle T,T\rangle$ where
$T$ is of even degree. Then $\phi^\Delta(\bbeta(\langle T,T\rangle))=2\langle T,T\rangle^s$. If $T$ has odd degree, then $\bbeta(\langle T,T\rangle)=0$, because there is an orientation reversing automorphism exchanging the two endpoints of the central edge.
 
Now consider the chain map $\phi^\Delta\circ\bbeta$ which will have both a kernel $\Ker_{\bullet}$ and a cokernel $\Cok_{\bullet}$:
$$0\to \Ker_{\bullet}\to\mathbb T_{\bullet}\to \overline{\mathbb L}^\Delta_{\bullet+1}\to \Cok_{\bullet}\to 0$$
Let us now analyze the kernel and cokernel. First we set up some convenient notation. A tree denoted by $A$ must have an orientation reversing automorphism. A tree denoted by $K$ must have no orientation reversing 
automorphism. A tree denoted by $J$ may or may not have one.

\begin{lem}
 The cokernel can be written as follows. \begin{align*}
&\Cok_{4i+2}=\Z\{\langle K,K\rangle^s \,|\, \deg(K)=2i+1\}\oplus\z\{\langle A,A\rangle^s \,|\, \deg(A)=2i+1\}\\
&\Cok_{4i+1}=\Z\{(K,K)\,|\, \deg(K)=2i+1\}\oplus 
\z\{(A,A)\,|\,\deg(A)=2i+1\}\\
&\Cok_{4i}=\z\{\langle J,J\rangle ^s\,|\,\deg(J)=2i\}\\
&\Cok_{4i-1}=  \z\{ (J,J)\,|\, \deg(J)=2i\}
\end{align*}
Moreover $\Cok_\bullet$ is an acyclic complex.
\end{lem}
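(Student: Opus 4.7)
My plan is to read off the generators of $\Cok_\bullet = \overline{\mathbb L}^\Delta_{\bullet+1}/\operatorname{im}(\phi^\Delta\circ\bbeta)$ from the critical classification, determine the $\Z$- versus $\z$-structure of each generator by an automorphism count, and then exploit the natural pairing between $\langle T,T\rangle^s$ (in $\Cok$-degree $2\deg T$) and $(T,T)=\langle T,T\rangle^b$ (in $\Cok$-degree $2\deg T-1$) to produce a chain contraction.

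\textbf{Generators and torsion.} By the preceding lemma, the critical generators of $\overline{\mathbb L}^\Delta_{\bullet+1}$ are $t^b$ and $\langle T,T\rangle^s$, and the image of $\phi^\Delta\circ\bbeta$ is $t^b$ for $t\neq\langle T,T\rangle$ together with $2\langle T,T\rangle^s$ when $\deg T$ is even (and nothing when $\deg T$ is odd). Thus $\Cok_\bullet$ is generated by $(T,T)$ and $\langle T,T\rangle^s$, with the extra relation $2\langle T,T\rangle^s=0$ in even degree. Any automorphism of $\langle T,T\rangle^s$ must fix the root (the unique unlabeled univalent vertex) and hence the endpoint of the central edge where the root attaches; this embeds $\operatorname{Aut}(\langle T,T\rangle^s)$ into $\operatorname{Aut}(T)$, giving $\Z$ when $T$ has no orientation-reversing automorphism and $\z$ when it does, both collapsing to $\z$ in even $\deg T$ thanks to the relation. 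For $(T,T)$ the swap of the two copies of $T$ is realized by $E(T)=n+1-\deg T$ edge transpositions; since $n$ is forced to be even for the tree to have $n+2$ leaves, this is odd (and the swap orientation-reversing) exactly when $\deg T$ is even, which combined with $\operatorname{Aut}(T)$ reproduces the stated $\Z\oplus\z$ decomposition in odd $\deg T$ and the all-$\z$ decomposition in even $\deg T$.

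\textbf{The induced differential.} Because $\phi^\Delta$ preserves the underlying unrooted tree, only summands of $\partial$ whose underlying tree is of the symmetric form $\langle T'',T''\rangle$ can contribute to $\partial^\Delta$ in $\Cok$. For $(T,T)$ every internal expansion happens inside one of the two copies of $T$ (the bracket vertex is trivalent), producing underlying $\langle T',T\rangle$ with $T'\neq T$; hence $\partial^\Delta(T,T)=0$ in $\Cok$. For $\langle T,T\rangle^s$, the same argument eliminates every expansion away from the root-adjacent endpoint $w^{(1)}$ of the central edge; since the unique critical generator with underlying $\langle T,T\rangle$ at the target degree $2\deg T$ is $(T,T)$ itself, the differential takes the form $\partial^\Delta\langle T,T\rangle^s = c_T(T,T)$ in $\Cok$. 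The partition at $w^{(1)}$ that pairs the root half-edge with the central half-edge reconstructs the bracket and contributes $\pm(T,T)$ with unit coefficient; the remaining partitions either vanish because they produce $(i,J)$-type trees (whenever the relevant subtree of $T$ at $w$ is a single labeled leaf) or flow nontrivially to $(T,T)$, and a sign/symmetry analysis combines these contributions with the bracket term to yield $c_T=\pm 1$ (respectively $1\in\z$).

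\textbf{Acyclicity.} With $\partial^\Delta(T,T)=0$ and $\partial^\Delta\langle T,T\rangle^s=c_T(T,T)$ for $c_T$ a unit, the map defined by $h((T,T))=c_T^{-1}\langle T,T\rangle^s$ and $h(\langle T,T\rangle^s)=0$ is a chain contraction: $h\partial^\Delta+\partial^\Delta h=\operatorname{id}$. Hence $\Cok_\bullet$ is acyclic. Equivalently, the pairs $\{((T,T),\langle T,T\rangle^s)\}$ form a gradient field on $\Cok_\bullet$ with no closed gradient loops (distinct isomorphism classes of $T$ give disjoint vectors), whose Morse complex vanishes. The main technical obstacle is confirming that $c_T$ is a unit: the non-bracket partitions at $w^{(1)}$ produce trees whose gradient flows must be analyzed carefully, using the flow-preserves-underlying principle together with the orientation conventions for expansions, to verify that the total coefficient of $(T,T)$ is $\pm 1$ (respectively $1$ in $\z$).
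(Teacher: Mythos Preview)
Your identification of the generators and your torsion computation are correct and match the paper's approach closely. The gap is in your acyclicity argument, precisely at the point you flag as ``the main technical obstacle.'' You assert that the non-bracket partitions at $w^{(1)}$ ``either vanish \ldots\ or flow nontrivially to $(T,T)$,'' and then invoke an unspecified sign/symmetry analysis to conclude $c_T=\pm 1$. But the second alternative never occurs, and no cancellation argument is needed: every term of $\partial\langle T,T\rangle^s$ other than $(T,T)$ already contributes $0$ in $\Cok$.

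The reason is the defining feature of the vector field $\Delta$: it pushes the root monotonically \emph{away} from the basepoint of the underlying tree, and the basepoint of $\langle T,T\rangle$ is the midpoint of the central edge. The bracket partition $\{\text{root, central half-edge}\}$ produces $(T,T)$, which is critical. Any other partition $\{\text{root},S\}$ with $|S|=1$ puts the root on a non-central edge of $\langle T,T\rangle$; the ensuing gradient path drives the root further into one copy of $T$ and terminates either at an $(i,J)$-tree (zero in $\overline{\mathbb L}$) or at a critical generator with root at a high-valence vertex. The latter cannot be of the form $(T',T')$ (whose root is trivalent), and the parity $2\deg T$ rules out $\langle T',T'\rangle^s$, so it is $t^b$ with $t$ non-symmetric and hence zero in $\Cok$. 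Partitions with $|S|\geq 2$, and expansions at vertices other than $w^{(1)}$, leave the root at a high-valence vertex; by the same argument these are zero in $\Cok$. Thus $\partial^\Delta\langle T,T\rangle^s=(T,T)$ in $\Cok$ with the single unit coefficient coming from the bracket term, with no further bookkeeping required. (Your separate argument for $\partial^\Delta(T,T)=0$ is then unnecessary: it follows from $(\partial^\Delta)^2=0$.) This is exactly how the paper proceeds.
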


\begin{proof}
The critical trees that are not hit by $\phi^\Delta\bbeta_\bullet$ are of two kinds: $(T,T)$ and $\langle T,T\rangle^s$. No multiple of $(T,T)$ is in the image,
whereas $2\langle T,T\rangle^s$ \emph{is} in the image if and only if $T$ has even degree. 

If $T$ is of odd degree and itself has an orientation-reversing automorphism, then both $(T,T)$ and $\langle T,T\rangle^s$ will be $2$-torsion, accounting for the $\z$-summands in degrees $4i+1$ and $4i+2$ above. If $T$ has no such automorphism, then neither does $\langle T,T\rangle^s$ nor $(T,T)$, accounting for the $\Z$-summands. Finally, when $T$ has even degree, $2\langle J,J\rangle^s$ is in the image of $\phi^\Delta\bbeta_\bullet$ and $(T,T)$ is $2$-torsion, accounting for the remaining terms above,

 Notice that $\partial^\Delta\langle J,J\rangle^s=(J,J)$, because one term of $\partial$ is critical and equal to 
$(J,J)$ and the other terms lead to gradient flows that push the root away from the basepoint and which eventually terminate in $0$. Thus $\Cok_{4i}\to \Cok_{4i-1}$ is a direct sum of acyclic complexes of the form $\z\to\z$. Similarly $\Cok_{4i+2}\to \Cok_{4i+1}$ is a direct sum of acyclic complexes either of the form $\z\to\z$ or $\Z\to\Z$. Therefore $\Cok_\bullet$ is acyclic.
\end{proof}

Now we turn to an analysis of $\Ker_\bullet$. Again a tree called $A$ must have an orientation reversing automorphism.
\begin{lem}
The kernel can be written as follows.
 \begin{align*}
&\Ker_{4i+2}=\z\{\langle T,T\rangle\,|\, \deg(T)=2i-1\}\\
&\Ker_{4i+1}=0\\
&\Ker_{4i}=\z\{\langle A,A\rangle \,|\,\deg(A)=2i\}\\
&\Ker_{4i-1}= 0
\end{align*}
\end{lem}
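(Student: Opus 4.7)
The plan is to analyze $\phi^\Delta \circ \bbeta_\bullet$ one summand at a time, using the explicit case-by-case evaluation already recorded just above. The generators of $\mathbb{T}_\bullet$ split into three types, and the proof amounts to determining the kernel contribution of each: (I) trees $t$ not of the form $\langle T, T\rangle$, on which $\phi^\Delta(\bbeta(t)) = \pm t^b$; (II) trees $\langle T, T\rangle$ with $\deg T$ odd, on which $\bbeta(\langle T, T\rangle) = 0$; and (III) trees $\langle T, T\rangle$ with $\deg T$ even, on which $\phi^\Delta(\bbeta(\langle T, T\rangle)) = \pm 2\langle T, T\rangle^s$.

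For type (I), I would check that $t \mapsto \pm t^b$ identifies the summand generated by $t$ in $\mathbb{T}_\bullet$ with the summand generated by the critical generator $t^b$ in $\overline{\mathbb{L}}^\Delta_{\bullet+1}$, with matching torsion structure. The key point is that the basepoint of $t$ is fixed by the symmetry group of $t$, so every automorphism of $t$ extends to one of $t^b$ by fixing the new root edge, with identical sign on the edge ordering (one fixed extra position does not change parity). Hence $t$ is 2-torsion iff $t^b$ is, and type (I) contributes nothing to $\Ker$.

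For type (II), the trees $\langle T, T\rangle$ with $\deg T$ odd automatically lie in $\Ker$. The central-edge swap is orientation-reversing in this parity (this is precisely why $\bbeta$ vanishes on them), so $\langle T, T\rangle$ is itself 2-torsion, producing a $\z$-summand in even degree $2\deg T$. For type (III), the element $\pm 2\langle T, T\rangle^s$ equals zero precisely when $\langle T, T\rangle^s$ is 2-torsion. I would invoke the cokernel analysis already in hand, which shows this occurs exactly when $T = A$ carries an orientation-reversing automorphism and fails when $T = K$ does not. In the $A$-case, $\langle A, A\rangle$ is itself 2-torsion: applying $A$'s orientation-reversing automorphism to just one of the two copies of $A$ extends to an orientation-reversing automorphism of $\langle A, A\rangle$, whose sign equals that of the original automorphism. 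This gives the $\z$-summand of $\Ker_{4i}$ for $\deg A = 2i$.

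Finally, since every $\langle T, T\rangle$ has even degree $2\deg T$, the kernel vanishes in odd degrees. The only point requiring genuine care is the 2-torsion classification of the critical generators $\langle T, T\rangle^s$, but this has already been settled in the cokernel lemma, so the proof here is essentially the bookkeeping of assembling the three cases; no substantive new obstacle arises.
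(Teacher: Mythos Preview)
Your proof is correct and follows essentially the same approach as the paper's own proof: both split the generators of $\mathbb T_\bullet$ into non-symmetric trees (your type~(I)), symmetric trees $\langle T,T\rangle$ with $\deg T$ odd (type~(II)), and with $\deg T$ even (type~(III)), and then read off the kernel from the explicit formula for $\phi^\Delta\circ\bbeta_\bullet$ established just before the lemma. Your write-up is in fact more detailed than the paper's---for instance, you explain why $t$ and $t^b$ generate summands of the same type (the basepoint being fixed by $\aut(t)$), whereas the paper simply asserts injectivity away from symmetric trees; and in case~(III) you separate the two logically distinct points (that $\langle T,T\rangle^s$ is $2$-torsion iff $T$ has an orientation-reversing automorphism, and that in that case $\langle A,A\rangle$ is itself $2$-torsion), which the paper compresses into a single sentence.
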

\begin{proof}
Clearly $\phi^\Delta\circ\bbeta_\bullet$ is injective away from symmetric trees $\langle T,T\rangle$. If the degree of $T$ is odd, these are all in the kernel. If the degree of $T$ is even, then $\langle T,T\rangle\mapsto 2\langle T,T\rangle^s$, and so is nonzero unless $\langle T,T\rangle$ is $2$-torsion, implying that $T$ has an orientation-reversing automorphism. 
\end{proof}
We are interested in establishing that $\bbeta_0$ induces an isomorphism on homology. This will follow because $\Ker_0$ is generated by trees which are zero in $\mathcal T_n$. (In degree $0$, $A$ will contain a subtree of the form $(J,J)$. Apply IHX to the base edge of this copy of $(J,J)$ to see that $\langle A,A\rangle$ is $0$ in $\mathcal T_n$.)

Formally, we argue by splitting the exact sequence into two short exact sequences:
$$
\xymatrix{
\Ker_i\ar@{>->}^\iota[r]&\mathbb T_{i}\ar[rr]^{\phi^\Delta\bbeta_i}\ar@{->>}[dr]&&\overline{\mathbb L}^\Delta_{i+1}\ar@{->>}[r]&\Cok_i\\
&&\mathbb T_{i}/\im \iota\ar@{>->}[ur]&&
}
$$
Then because the inclusion $\iota_0\colon H_0(\Ker_\bullet)\to H_0(\mathbb T_\bullet)$ induces the zero map, we have that $H_0(\mathbb T_\bullet)\cong H_0(\mathbb T_\bullet/\im \iota)$. On the other hand, the cokernel is acyclic, so that $H_i(\mathbb T_\bullet/\im \iota)\cong H_{i+1}(\overline{\mathbb L}^\Delta_\bullet)$ for all $i$.
Therefore 
$$\mathcal T_n\cong H_0(\mathbb T_\bullet)\cong H_1(\overline{\mathbb L}^\Delta_\bullet)\cong
H_1(\overline{\mathbb L}_\bullet)\cong{\sf D}'_n.$$
 This completes the proof of Theorem~\ref{lem:reducedT}.

{\bf Remark:} It follows from the above discussion that $\bbeta_\bullet$ does not induce an isomorphism  $H_i(\mathbb T_\bullet)\to H_{i+1}(\overline{\mathbb L}_\bullet)$ for arbitrary $i$. Indeed, at the next degree there is an exact sequence
$$0\to H_1(\mathbb T_\bullet)\to H_2(\overline{\mathbb L}_\bullet)\to\z\{\langle A,A\rangle\,|\,\deg(A)=0\}\to 0$$
 This demonstrates a failure of surjectivity of $\bbeta_1$. In general $\bbeta$ can also fail to be injective. For example, $\langle(1,1,2),(1,1,2)\rangle\in\mathbb T_2$ represents a nontrivial $2$-torsion homology class, which can be easily checked since there is only one degree $3$ tree with the same signature.
This tree is clearly in the kernel of $\bbeta_2$.  
  
  %===============================================================
\section{Comparing filtrations of the group of homology cylinders}\label{sec:filtrations}
Let $\Sigma_{g,1}$ denote the compact orientable surface of genus $g$ with one 
boundary component. A \emph{homology cylinder} over $\Sigma_{g,1}$ is a compact $3$ manifold $M$ which is homologically equivalent to the cylinder $\Sigma_{g,1}\times[0,1]$, equipped with standard parameterizations of the two copies of $\Sigma_{g,1}$ at each ``end." 
Two homology cylinders $M_0$ and $M_1$ are said to be \emph{homology cobordant} if there is a compact oriented $4$-manifold $W$ with $\partial W=M_0\cup_{\Sigma_{g,1}} (-M_1)$, such that the inclusions $M_i\hookrightarrow W$ are homology isomorphisms. This defines an equivalence relation on the set of homology cylinders.
Let $\mathcal H_g$ be the set of homology cylinders up to homology cobordism over $\Sigma_{g,1}$. $\mathcal H_g$ is a group via the ``stacking" operation. 

Adapting the usual string link definition, Garoufalidis and Levine \cite{GL} introduced an Artin-type representation
$\sigma_n\colon \mathcal{H}_g\to A_0(F/F_{n+2})$ where $F$ is the free group on $2g$ generators, and $A_0(F/F_{n+1})$ is the group of automorphisms $\phi$ of $F/F_{n+1}$ such that  $\phi$ fixes the product $[x_1,y_1]\cdots [x_g,y_g]$ modulo $F_{n+1}$. Here $\{x_i,y_i\}_{i=1}^g$ is a standard symplectic basis for $\Sigma_{g,1}$.  The Johnson (relative weight) filtration of $\mathcal H_g$ is defined by $\mathbb {J}_n=\Ker \sigma_n$.  Define the associated graded group ${\sf J}_n=\mathbb J_n/\mathbb J_{n+1}$. Levine shows that ${\sf J}_n\cong{\sf D_n}$.

On the other hand, there is a filtration related to Goussarov-Habiro's theory of finite type 3-manifold invariants. We define the relation of $A_n$-equivalence to be generated by the following move: $M\sim_n M'$ if $M'$ is 
diffeomorphic to $M_C$, for some connected clasper $C$ with $n$ nodes. Let $\mathbb Y_n$ be the subgroup of $\mathcal H_g$ of all homology cylinders $A_n$-equivalent to the trivial one, and let ${\sf Y}_n=\mathbb Y_n/\sim_{n+1}$. Rationally, Levine showed the associated graded groups for these two filtrations are the same, and are even classified by the tree group $\mathcal T_{n}$:

\begin{thm}[Levine] \label{thm:levine} There is a commutative diagram\\
\centerline{$\xymatrix{
\cT_{n}\ar@{->>}^{\theta_n}[r]\ar@/_1pc/[rrr]_{\eta_{n}}&{\sf Y}_n\ar[r]&\mathcal {\sf J}_n\ar[r]^\cong &{\sf D}_n
}.$} All of these maps are rational isomorphisms.
\end{thm}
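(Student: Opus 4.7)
The plan is to factor $\eta_n$ through the intermediate graded groups and leverage the known rational description of $\cT_n$ to deduce that all three arrows are rational isomorphisms simultaneously. First I would construct $\theta_n\colon \cT_n \to {\sf Y}_n$ by realizing each generator tree $t$ as a connected clasper $C(t)$ embedded in the trivial cylinder $\Sigma_{g,1}\times[0,1]$: the internal vertices of $t$ determine the shape of $C(t)$, and each leaf labeled by $i$ is clasped around a curve representing the corresponding symplectic basis element of $\pi_1(\Sigma_{g,1})$. Setting $\theta_n(t) := [M_{C(t)}] \in {\sf Y}_n$, Habiro's clasper calculus guarantees that the class is well-defined modulo $A_{n+1}$-equivalence, that it is additive under the natural disjoint-union/stacking operation, and that the defining relations of $\cT_n$ (antisymmetry and IHX) are respected at the graded level by the standard clasper moves. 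Surjectivity of $\theta_n$ is essentially built into the definition of $\mathbb Y_n$, since every generator of ${\sf Y}_n$ arises from such a connected clasper surgery.

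Next I would define the map ${\sf Y}_n \to {\sf J}_n$: a connected $A_n$-clasper surgery acts trivially on $F/F_{n+1}$ and preserves the boundary relator, so $\mathbb Y_n \subseteq \mathbb J_n$, and this inclusion descends to associated graded quotients. The isomorphism ${\sf J}_n \cong {\sf D}_n$ is Levine's extension of Johnson's theorem \cite{L1}: while the Johnson homomorphism $\tau_n$ on the mapping class group need not surject onto ${\sf D}_n$, enlarging to homology cylinders restores surjectivity, and $\ker \tau_n|_{\mathbb J_n} = \mathbb J_{n+1}$ follows from the very definition of the weight filtration.

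The triangle's commutativity reduces to computing the leading Johnson homomorphism of $M_{C(t)}$. Via the Magnus expansion and a direct clasper-calculus computation \cite{GL}, $\tau_n(M_{C(t)})$ equals $\sum_v X_{\ell(v)} \otimes B_v(t)$, summed over univalent vertices $v$, with $B_v(t)$ the iterated Lie bracket obtained by rooting $t$ at $v$ --- which is exactly the definition of $\eta_n(t)$ in the (genuine) Lie setting. Finally, that $\eta_n$ is a rational isomorphism follows from the classical fact that $\Q\otimes \cT_n$ and $\Q\otimes {\sf D}_n$ are both computable as $\mathrm{Sp}$-invariant pieces of tensor powers of the free Lie algebra, matched term-by-term by the Hall-basis count; alternatively, it can be extracted from Proposition~\ref{nohom} combined with the rational analogue of Proposition~\ref{prop:isomorphisms}. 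Once $\eta_n \otimes \Q$ is an isomorphism and $\theta_n$ is surjective, the factorization $\eta_n = (\text{iso}) \circ (\cdot) \circ \theta_n$ forces $\theta_n \otimes \Q$ and the middle arrow to be isomorphisms as well.

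The main obstacle is the commutativity verification: pinning down the exact formula for the first nontrivial Johnson homomorphism of a clasper surgery requires careful bookkeeping in the nilpotent quotients of $F$ via the Magnus expansion, and it is precisely here that the distinction between ${\sf D}_n$ (Lie) and ${\sf D}'_n$ (quasi-Lie) is invisible rationally but becomes the central integral subtlety addressed elsewhere in this paper.
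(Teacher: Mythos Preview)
The paper does not supply its own proof of this statement: Theorem~\ref{thm:levine} is stated as a result of Levine (with the construction of $\theta_n$ and the Johnson-homomorphism computation drawn from \cite{GL,L1,L2}) and is used in Section~\ref{sec:filtrations} purely as a black box to derive Theorem~\ref{thm:filtration}. So there is no in-paper argument to compare against.

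That said, your sketch is an accurate outline of how the result is obtained in the cited literature. The clasper realization $t\mapsto M_{C(t)}$ and the verification that AS and IHX hold modulo $A_{n+1}$-equivalence is exactly Habiro's calculus as used in \cite{GL}; the inclusion $\mathbb Y_n\subseteq\mathbb J_n$ and the identification ${\sf J}_n\cong{\sf D}_n$ are in \cite{L1}; and the commutativity step---that the leading Johnson invariant of $M_{C(t)}$ is $\eta_n(t)$---is the main computation of \cite{GL}. Your closing algebraic step is also sound: once $\eta_n\otimes\Q$ is an isomorphism and $\theta_n\otimes\Q$ is surjective, the factorization through an isomorphism forces $\theta_n\otimes\Q$ to be injective (hence an isomorphism), and then the middle arrow is forced as well. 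The one place to be slightly careful is your appeal to Proposition~\ref{nohom} plus Proposition~\ref{prop:isomorphisms} for the rational statement: those propositions concern the quasi-Lie target ${\sf D}'_n$, so you are implicitly using that ${\sf D}_n\otimes\Q\cong{\sf D}'_n\otimes\Q$, which is immediate since the self-annihilation and antisymmetry relations agree over $\Q$.
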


The story is more subtle over the integers. Levine conjectured the statements in the following theorem, which are straightforward consequences of the fact that $\eta'$ is an isomorphism.
\begin{thm}\label{thm:filtration}
There are exact sequences:
$$0\to{\sf Y}_{2n}\to {\sf J}_{2n}\to \z\otimes{\sf L}_{n+1} \to 0\,\,\, n\geq 1$$
$$\z^m\otimes {\sf L}_{n} \to{\sf Y}_{2n-1}\to {\sf J}_{2n-1}\to 0\,\,\, n\geq 2$$
\end{thm}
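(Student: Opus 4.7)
The plan is to reduce Theorem~\ref{thm:filtration} to routine diagram chasing using Theorem~\ref{mainthm} (so that $\eta'_n$ is an isomorphism), Levine's identification ${\sf J}_n\cong{\sf D}_n$, and the natural comparison map $\pi\colon{\sf D}'_n\to{\sf D}_n$ induced by the quotient ${\sf L}'_k\twoheadrightarrow{\sf L}_k$ that imposes the relation $[Z,Z]=0$. The key diagram is
$$
\xymatrix{
0 \ar[r] & {\sf D}'_n \ar[r] \ar[d]^{\pi} & H\otimes{\sf L}'_{n+1} \ar[r]^{b'} \ar[d] & {\sf L}'_{n+2} \ar[d] \ar[r] & 0\\
0 \ar[r] & {\sf D}_n \ar[r] & H\otimes{\sf L}_{n+1} \ar[r]^{b} & {\sf L}_{n+2}\ar[r] & 0
}
$$
with exact rows (the bracketings $b'$ and $b$ are surjective because $H$ generates in positive degrees). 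The kernel of ${\sf L}'_k\to{\sf L}_k$ vanishes for $k$ odd, and for $k$ even it is isomorphic to $\z\otimes{\sf L}_{k/2}$ via the map $Z\mapsto[Z,Z]$; this is well-defined mod $2$ because the quasi-Lie antisymmetry gives $[Z+W,Z+W]=[Z,Z]+[W,W]$ and $2[Z,Z]=0$, and its injectivity --- the nontrivial input --- is part of Levine's quasi-Hall analysis in \cite{L3} (closely related to the vector field $\Delta'_0\cup\Delta'_1$ constructed in the proof of Proposition~\ref{prop:2tor}).

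Applying the snake lemma to this diagram in the two parities produces short exact sequences
$$0\to{\sf D}'_{2k}\xrightarrow{\pi}{\sf D}_{2k}\to\z\otimes{\sf L}_{k+1}\to 0 \qquad\text{and}\qquad 0\to\z^m\otimes{\sf L}_k\to{\sf D}'_{2k-1}\xrightarrow{\pi}{\sf D}_{2k-1}\to 0.$$
In the even case the middle vertical is an isomorphism (odd degree $2k+1$), so the cokernel comes entirely from the right-hand kernel $\z\otimes{\sf L}_{k+1}$. In the odd case the right vertical is an isomorphism (odd degree $2k+1$), so the kernel comes from the middle vertical kernel $H\otimes\z\otimes{\sf L}_k\cong\z^m\otimes{\sf L}_k$; one checks by Jacobi that $[X_i,[Z,Z]]=0$ in ${\sf L}'_{2k+1}$, so this entire kernel already lives in ${\sf D}'_{2k-1}$.

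To transport to $\cT_n$ and then to ${\sf Y}_n$, I invoke Theorem~\ref{mainthm} to identify $\cT_n\cong{\sf D}'_n$, noting directly from the formula $\eta'(t)=\sum_v X_{\ell(v)}\otimes B'_v(t)$ that $\eta_n=\pi\circ\eta'_n$. This rewrites the above two sequences as
$$0\to\cT_{2k}\xrightarrow{\eta_{2k}}{\sf J}_{2k}\to\z\otimes{\sf L}_{k+1}\to 0,\qquad 0\to\z^m\otimes{\sf L}_k\to\cT_{2k-1}\xrightarrow{\eta_{2k-1}}{\sf J}_{2k-1}\to 0.$$
Now I use the surjection $\theta_n\colon\cT_n\twoheadrightarrow{\sf Y}_n$ from Theorem~\ref{thm:levine} and the factorization $\eta_n=({\sf Y}_n\to{\sf J}_n)\circ\theta_n$. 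In the even case the injectivity of $\eta_{2k}$ forces $\theta_{2k}$ to be injective, hence an isomorphism, so ${\sf Y}_{2k}\cong\cT_{2k}$ embeds in ${\sf J}_{2k}$ with cokernel $\z\otimes{\sf L}_{k+1}$. In the odd case the surjectivity of $\eta_{2k-1}$ gives surjectivity of ${\sf Y}_{2k-1}\to{\sf J}_{2k-1}$, and any class in its kernel lifts through $\theta_{2k-1}$ to $\ker\eta_{2k-1}\cong\z^m\otimes{\sf L}_k$, producing the second exact sequence.

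The main obstacle is not the formal argument but the Lie-theoretic identification $\ker({\sf L}'_k\to{\sf L}_k)\cong\z\otimes{\sf L}_{k/2}$ for $k$ even. Surjectivity of the map $Z\mapsto[Z,Z]$ onto the kernel and its linearity mod $2$ are easy, but injectivity of the induced map $\z\otimes{\sf L}_{k/2}\to{\sf L}'_k$ is the substantive input and requires Levine's extension of the Hall basis algorithm to the quasi-Lie setting \cite{L3}. Once that identification is in hand, the rest is snake-lemma bookkeeping and a short passage from $\cT_n$ to ${\sf Y}_n$ via the known surjection $\theta_n$.
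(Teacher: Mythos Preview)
Your proposal is correct and follows essentially the same route as the paper. The only difference is cosmetic: the paper simply \emph{cites} Levine's exact sequences $0\to{\sf D}'_{2n}\to{\sf D}_{2n}\to\z\otimes{\sf L}_{n+1}\to 0$ and $0\to\z^m\otimes{\sf L}_n\to{\sf D}'_{2n-1}\to{\sf D}_{2n-1}\to 0$ from \cite{L2,L3} and then performs the same diagram chase through Theorem~\ref{thm:levine} that you do, whereas you additionally sketch a snake-lemma derivation of those sequences (correctly identifying the substantive input as the injectivity of $Z\mapsto[Z,Z]$ from \cite{L3}).
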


Levine did not conjecture that the map $\z^m\otimes {\sf L}_{n} \to{\sf Y}_{2n-1}$ is injective, and in fact it is \emph{not} injective, basically because the framing relations discussed in \cite{CST1} are also present in this context. 
%Dividing $\cT_{2n-1}$ by these relations leads to the group $\widetilde{\cT}_{2n-1}$.
As we prove in \cite{CST5}, for odd numbers of the form $4n-1$ this allows us to get a sharp answer  to what the kernel of ${\sf Y}_{4n-1}\to {\sf J}_{4n-1}$ is, while for odd numbers of the form $4n+1$ we determine it up to $\z\otimes {\sf L}_{n+1}$:

\begin{thm}[\cite{CST5}] \label{thm:filt} There are exact sequences
\begin{enumerate}
\item $0\to {\sf Y}_{2n}\to {\sf J}_{2n}\to \mathbb Z_2\otimes{\sf L}_{n+1}\to 0$\hspace{1em}$n\geq 1$
\item  $0\to \mathbb Z_2\otimes{\sf L}_{2n+1}\to {\sf Y}_{4n-1}\to{\sf J}_{4n-1}\to 0$\hspace{1em}$n\geq 1$
\item $0\to{\sf K}^{\sf Y}_{4n+1}\to{\sf Y}_{4n+1}\to {\sf J}_{4n+1}\to 0$\hspace{1em}$n\geq 0$, where the kernel ${\sf K}^{\sf Y}_{4n+1}$ fits into the exact sequence $\mathbb Z_2\otimes {\sf L}_{n+1}\overset{a_{n+1}}\to{\sf K}^{\sf Y}_{4n+1}\to \mathbb Z_2\otimes {\sf L}_{2n+2}\to 0$.
\end{enumerate}
\end{thm}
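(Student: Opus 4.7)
The plan is to combine Theorem~\ref{mainthm} with the precise analysis of the framing-relations subgroup $F_n \subseteq \cT_n$ carried out in \cite{CST1}, where $F_n$ is by definition the kernel of the surjection $\cT_n \twoheadrightarrow {\sf Y}_n$. Under Theorem~\ref{mainthm} we have $\cT_n \cong {\sf D}'_n$, and the commutative diagram of Theorem~\ref{thm:levine} identifies the composite $\cT_n \to {\sf J}_n \cong {\sf D}_n$ with the natural quotient $\bar{\pi}\colon {\sf D}'_n \to {\sf D}_n$ induced by ${\sf L}'_k \twoheadrightarrow {\sf L}_k$. In particular $F_n \subseteq \ker \bar{\pi}$, so the kernel and cokernel of ${\sf Y}_n \to {\sf J}_n$ split into (i) a purely algebraic computation of $\ker\bar{\pi}$ and $\mathrm{coker}\,\bar{\pi}$, and (ii) the identification of $F_n$ as an explicit subgroup of $\ker\bar{\pi}$.

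For step (i), I would apply the snake lemma to the short exact sequences $0 \to \Z^m \otimes K'_{n+1} \to \Z^m \otimes {\sf L}'_{n+1} \to \Z^m \otimes {\sf L}_{n+1} \to 0$ and its analog in degree $n+2$, where $K'_k := \ker({\sf L}'_k \to {\sf L}_k)$. Jacobi plus antisymmetry in the quasi-Lie algebra forces $[W,[Y,Y]] = 0$ in ${\sf L}'$, so the ideal generated by self-brackets is just the additive span of the $[Y,Y]$-elements, giving $K'_k \cong \z \otimes {\sf L}_{k/2}$ for $k$ even and $K'_k = 0$ for $k$ odd. The snake sequence then yields cokernel $\z \otimes {\sf L}_{n+1}$ with trivial kernel for $N = 2n$ (which settles part (1) with no further input), and trivial cokernel with kernel $\z^m \otimes {\sf L}_{2n}$ respectively $\z^m \otimes {\sf L}_{2n+1}$ for $N = 4n-1$ or $N = 4n+1$; so parts (2) and (3) reduce to computing $\ker\bar{\pi}/F_N$.

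For step (ii), the framing relations must be matched with specific subgroups of these kernels using Whitney tower theory \cite{CST1,CST2,CST4}. The bracket map $\z^m \otimes {\sf L}_{(N+1)/2} \twoheadrightarrow \z \otimes {\sf L}_{(N+3)/2}$ is surjective with kernel $\z \otimes {\sf D}_{(N-1)/2}$. In case (2) one checks that $F_{4n-1}$ coincides with $\z \otimes {\sf D}_{2n-1}$, so the quotient is $\z \otimes {\sf L}_{2n+1}$, giving the exact sequence of part (2). In case (3), however, $F_{4n+1}$ is strictly smaller than $\z \otimes {\sf D}_{2n}$: the surviving $2$-torsion in $\z \otimes {\sf D}_{2n}/F_{4n+1}$ is realized as the image of a higher-order Arf map $a_{n+1}\colon \z \otimes {\sf L}_{n+1} \to \z \otimes {\sf D}_{2n}$ (not in general injective) sending a class $Y$ to a ``doubled'' symmetric tree built from $[Y,Y]$. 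A diagram chase then produces the two-step filtration $\z \otimes {\sf L}_{n+1} \to {\sf K}^{\sf Y}_{4n+1} \to \z \otimes {\sf L}_{2n+2} \to 0$.

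The main obstacle is step (ii) in the $4n+1$ case, where two distinct sources of $2$-torsion in ${\sf K}^{\sf Y}_{4n+1}$ must be disentangled: the Arf-type contribution (image of $a_{n+1}$) versus the bracket-cokernel contribution $\z \otimes {\sf L}_{2n+2}$. Establishing that $F_{4n+1}$ kills exactly the complement of the Arf-type subgroup without touching the bracket-cokernel part requires the classification of ${\sf W}_n$ by Milnor, Sato-Levine and Arf invariants developed across \cite{CST0,CST1,CST2,CST4}; this matching of algebraic and topological $2$-torsion is the technical content of the proof carried out in \cite{CST5}.
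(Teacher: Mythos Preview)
The paper does not contain a proof of Theorem~\ref{thm:filt}; it is stated here with explicit attribution to \cite{CST5} (see the sentence preceding the theorem and the introduction: ``Theorem~\ref{thm:filt}, which we will prove in \cite{CST5}''). What this paper actually proves is the weaker Theorem~\ref{thm:filtration}, so there is no proof here to compare your proposal against in the strict sense.

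That said, your outline is entirely consistent with the structure the paper indicates. Your step~(i) for $N=2n$ reproduces the paper's proof of the first exact sequence in Theorem~\ref{thm:filtration}: the paper uses Levine's sequences $0\to{\sf D}'_{2n}\to{\sf D}_{2n}\to\z\otimes{\sf L}_{n+1}\to 0$ and $0\to\z^m\otimes{\sf L}_n\to{\sf D}'_{2n-1}\to{\sf D}_{2n-1}\to 0$ from \cite{L2,L3} directly, whereas you rederive them via the snake lemma and the identification $K'_k\cong\z\otimes{\sf L}_{k/2}$, but the content is the same. Your reduction of parts~(2) and~(3) to identifying the framing kernel $F_N=\ker(\cT_N\to{\sf Y}_N)$ inside $\ker({\sf D}'_N\to{\sf D}_N)$ matches exactly the paper's remark following Theorem~\ref{thm:filtration} that the map $\z^m\otimes{\sf L}_n\to{\sf Y}_{2n-1}$ fails to be injective ``basically because the framing relations discussed in \cite{CST1} are also present in this context.''

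The substantive claims in your step~(ii) --- that $F_{4n-1}$ coincides with $\z\otimes{\sf D}_{2n-1}$, and that $F_{4n+1}$ sits inside $\z\otimes{\sf D}_{2n}$ with quotient realized by an Arf-type map $a_{n+1}$ --- are precisely what is deferred to \cite{CST5} and cannot be checked against anything in this paper. You correctly flag this as the technical heart of the argument.
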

The calculation of the kernel ${\sf K}^{\sf Y}_{4n+1}$ is thus reduced to the calculation of $\Ker(a_{n+1})$. This is the precise analog of the question in the Whitney tower world of whether $\alpha_{n+1}$ is injective and how nontrivial are the higher-order Arf invariants.

\begin{conj}
The homomorphisms $a_{n+1}$ are injective for all $n\geq 0$, implying that there is an exact sequence $0\to \mathbb Z_2\otimes {\sf L}'_{2n+2}\to{\sf Y}_{4n+1}\to {\sf J}_{4n+1}\to 0$.
\end{conj}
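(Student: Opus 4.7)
The plan is to approach this conjecture by adapting the strategy used for higher-order Arf invariants in the Whitney tower setting, since the paper explicitly identifies it as the precise analog of the injectivity question for $\alpha_{n+1}$. Given the exact sequence $\z \otimes {\sf L}_{n+1} \overset{a_{n+1}}{\to} {\sf K}^{\sf Y}_{4n+1} \to \z \otimes {\sf L}_{2n+2} \to 0$ of Theorem~\ref{thm:filt}, injectivity of $a_{n+1}$ together with the appropriate extension data would produce the conjectured exact sequence, with $\z \otimes {\sf L}'_{2n+2}$ arising from $\z \otimes {\sf L}_{2n+2}$ and $\z \otimes {\sf L}_{n+1}$ assembled via the quasi-Lie relations that adjoin symmetric classes $[Z,Z]$.

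First I would construct explicit representatives for the image of $a_{n+1}$. Given a basis element $X \in {\sf L}_{n+1}$, the goal is to produce a homology cylinder representing $a_{n+1}(X \otimes 1)$ via surgery on a clasper whose underlying unrooted tree has the ``symmetric doubled'' form $\langle T, T\rangle$ with $T$ a rooted tree representing $X$. This mirrors the role symmetric trees play in the proof of Theorem~\ref{lem:reducedT}, where $\langle T, T \rangle$ with $T$ of odd degree generates the obstruction to $\bbeta$ being surjective on $H_1$. Clasper calculus combined with the Garoufalidis--Levine representation should then confirm that these surgeries indeed land in ${\sf K}^{\sf Y}_{4n+1}$ and realize the expected classes.

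Second I would define higher-order Arf-type invariants $\mathrm{Arf}_n \colon {\sf K}^{\sf Y}_{4n+1} \to \z \otimes {\sf L}_{n+1}$, modeled on the higher-order Arf invariants for Whitney towers in \cite{CST1,CST4}. The idea is to embed a homology cylinder $M$ into a 4-manifold realizing a homology cobordism to the trivial cylinder, construct a canonical bounding surface for suitable meridian curves, and extract a $\z$-valued obstruction from the tree-indexed intersection and self-intersection data of an order-$(2n+1)$ Whitney tower on that surface. The final step is to verify that the composition $\mathrm{Arf}_n \circ a_{n+1}$ is an isomorphism onto its image, giving the desired injectivity.

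The hard part, as in all Arf-invariant constructions, is well-definedness: one must show $\mathrm{Arf}_n(M)$ is independent of the choice of bounding 4-manifold and Whitney tower, and that it descends modulo $A_{4n+2}$-equivalence. The failure of such independence modulo framing relations is precisely what produces the $\z \otimes {\sf L}_{n+1}$ summand of the kernel in the first place, so pinning down the correct quotient is delicate. Since the analogous injectivity of $\alpha_{n+1}$ in the Whitney tower world is itself open, a complete resolution will almost certainly require a genuinely new geometric input --- for example, a $\mathrm{Pin}^{-}$ bordism invariant read off from a double cover of the cobordism 4-manifold branched over the Whitney tower --- rather than a direct transfer of the existing tree-level discrete Morse theory arguments.
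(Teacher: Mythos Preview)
This statement is a \emph{conjecture} in the paper, not a theorem: the paper offers no proof of it whatsoever. It is explicitly stated as the analog of the open question of whether the higher-order Arf invariants $\alpha_{n+1}$ are injective in the Whitney tower setting, and is left open. So there is no ``paper's own proof'' against which to compare your proposal.

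Your proposal is therefore not a proof but a research outline, and to your credit you essentially acknowledge this in the final paragraph. The key honest admission is that well-definedness of the putative $\mathrm{Arf}_n$ invariants is the crux, and that the analogous question for $\alpha_{n+1}$ is itself open. That is exactly right: nothing in your first two steps (constructing clasper representatives for $a_{n+1}(X\otimes 1)$, and attempting to define $\mathrm{Arf}_n$ via Whitney tower intersection data in a bounding $4$-manifold) gets past the genuine obstruction, which is producing an invariant that is independent of the choices of cobordism and tower. The discrete Morse theory and tree-homology machinery of this paper controls the algebra of $\cT_n$ and $\sD'_n$ but gives no leverage on this geometric independence question. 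So while your plan identifies the correct target and the correct difficulty, it does not contain an idea that would resolve the conjecture; the speculative $\mathrm{Pin}^{-}$ bordism suggestion at the end is just that --- a speculation, not an argument.
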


To prove Theorem~\ref{thm:filtration} we will use the following exact sequences
$$0\to \z^m\otimes {\sf L}_n\to{\sf D}'_{2n-1}\to {\sf D}_{2n-1}\to 0$$
$$0\to {\sf D}'_{2n}\to {\sf D}_{2n}\to\z\otimes{\sf L}_{n+1}\to 0$$
proven by Levine in \cite{L2,L3}, and the commutative diagram of Theorem~\ref{thm:levine}.
\begin{proof}[Proof of Theorem~\ref{thm:filtration}]

In the even case, we get the following diagram:
$$\xymatrix{
&&&{\sf D}'_{2n}\ar@{>->}[d]\\
\cT_{2n}\ar@{->>}[r]_{\theta_{2n}}\ar[rrru]_{\eta'}^{\cong}&{\sf Y}_{2n}\ar[r]&{\sf J}_{2n}\ar[r]^\cong &{\sf D}_{2n}\ar@{->>}[d]\\
&&&\z\otimes {\sf L}_{n+1}
}
$$
from which it follows that the map ${\sf Y}_{2n}\to\mathcal {\sf J}_{2n}$ is injective with cokernel $\z\otimes {\sf L}_{n+1}$. Indeed after making various identifications using the isomorphisms in the diagram, we get a commutative diagram
$$\xymatrix{
\cT_{2n}\ar@{>->}[r]\ar@{->>}[d]^{\theta_{2n}}&\mathcal {\sf J}_{2n}\ar@{->>}[r]&\z\otimes{\sf L}_{n+1}\\
{\sf Y}_{2n}\ar[ur]&&
}
$$
where the top row is exact. By commutativity of the triangle, $\theta_{2n}$ is an isomorphism.
  
Turning to the odd case, we have a commutative diagram:
  $$\xymatrix{
  &&&\z^m\otimes{\sf L}_n\ar@{>->}[d]\\
&&&{\sf D}'_{2n-1}\ar@{->>}[d]\\
\cT_{2n-1}\ar@{->>}[r]_{\theta_{2n}}\ar[rrru]_{\eta'}^{\cong}&{\sf Y}_{2n-1}\ar[r]&\mathcal {\sf J}_{2n-1}\ar[r]^\cong &{\sf D}_{2n-1}\\
}
$$
which collapses, after identifications, to the following diagram, where the top row is exact.
$$
\xymatrix{
\z^m\otimes{\sf L}_n\ar@{>->}[r]&\mathcal T_{2n-1}\ar@{->>}[r]\ar@{->>}[d]_{\theta_{2n-1}}&\mathcal {\sf J}_{2n-1}\\
&{\sf Y}_{2n-1}\ar[ur]
}
$$
Thus the map   ${\sf Y}_{2n-1}\to\mathcal {\sf J}_{2n-1}$ is surjective, and $\z^m\otimes{\sf L}_n$ maps onto the kernel. 
\end{proof}
 %=============================================================== 

%\Addresses

\end{document}